\documentclass{article}
\title{Discrete and continuous Muttalib--Borodin processes I: the hard edge}
\author{Dan Betea\thanks{Department of Mathematics, KU Leuven,  Celestijnenlaan 200b -- box 2400,
B-3001 Leuven, Belgium. The author was supported by FWO Flanders project EOS 30889451. E-mail: {\tt dan.betea@gmail.com}}
\and Alessandra Occelli\thanks{Department of Mathematics, Instituto Superior T\'ecnico, Av.~Rovisco Pais 1, 1049-001 Lisbon, Portugal. The author was supported by the HyLEF ERC starting grant 2016. E-mail: {\tt alessandra.occelli@tecnico.ulisboa.pt }}}
\date{}

\usepackage{amsmath, amsthm, amssymb, verbatim, amsfonts, bbm, mathtools}
\usepackage{color, graphicx, hyperref}
\usepackage[margin=0.9in]{geometry}

\def\Z{\mathbb{Z}}
\def\N{\mathbb{N}}
\def\R{\mathbb{R}}
\def\P{\mathbb{P}}

\def\x{\mathbf{x}}
\def\y{\mathbf{y}}

\def\u{\mathbf{u}}
\def\v{\mathbf{v}}
\def\a{\mathbf{a}}
\def\b{\mathbf{b}}
\def\Id{\mathbbm{1}}

\DeclareMathAlphabet{\mathpzc}{OT1}{pzc}{m}{it}

\newtheorem{prop}{Proposition}[section]
\newtheorem{thm}[prop]{Theorem}

\newtheorem{cla}[prop]{Claim}

\newtheorem{remark}[prop]{Remark}

\newenvironment{rem}{\begin{remark}\normalfont}{\end{remark}}

\numberwithin{equation}{section}

\allowdisplaybreaks

\begin{document}

\maketitle

\abstract{In this note we study a natural measure on plane partitions giving rise to a certain discrete-time Muttalib--Borodin process (MBP): each time-slice is a discrete version of a Muttalib--Borodin ensemble (MBE). The process is determinantal with explicit time-dependent correlation kernel. Moreover, in the $q \to 1$ limit, it converges to a continuous Jacobi-like MBP with Muttalib--Borodin marginals supported on the unit interval. This continuous process is also determinantal with explicit correlation kernel. We study its hard-edge scaling limit (around 0) to obtain a discrete-time-dependent generalization of the classical continuous Bessel kernel of random matrix theory (and, in fact, of the Meijer $G$-kernel as well). We lastly discuss two related applications: random sampling from such processes, and their interpretations as models of directed last passage percolation (LPP). In doing so, we introduce a corner growth model naturally associated to Jacobi processes, a version of which is the ``usual'' corner growth of Forrester--Rains in logarithmic coordinates. The aforementioned hard edge limits for our MBPs lead to interesting asymptotics for these LPP models. In particular, a special cases of our LPP asymptotics give rise (via the random matrix Bessel kernel and following Johansson's lead) to an extremal statistics distribution interpolating between the Tracy--Widom GUE and the Gumbel distributions.}

\tableofcontents

\section{Introduction}

\paragraph{Background.}

Muttalib--Borodin ensembles (MBEs for short) are probability measures on $n$ points $0 < x_1 < \dots < x_n$ of the from
\begin{equation}
    \P(x_1 \in d x_1, \dots, x_n \in d x_n) = Z^{-1} \prod_{1 \leq i<j \leq n} (x_j-x_i)(x_j^\theta-x_i^\theta) \prod_{i=1}^n e^{-V(x_i)}
\end{equation}
where $\theta > 0$, $V$ is a potential and $Z$ is the normalization constant (partition function). These ensembles were introduced by Muttalib~\cite{mut95} as generalizations of random matrix ensembles that are at the same time toy models for disordered conductors. He observed that these ensembles are determinantal and hoped that further analysis could be carried out in some (so-called integrable) cases. Borodin achieved this in~\cite{bor99}, explicitly computing the correlation kernels when the weight $w(x) = e^{-V(x)}$ is the (degenerate) Jacobi weight, the Laguerre and the (generalized) Hermite weight. In his most general case when $w(x) = x^\alpha \Id_{[0, 1]}$ he further proved (a somewhat weaker version of) the following hard-edge (scaling around 0) limit result:
\begin{equation}
    \lim_{n \to \infty} \P\left( \frac{x_1}{n^{1+\frac{1}{\theta}}} < s\right) = \det (1-K_B)_{L^2(0,s)}.
\end{equation} 
Here the right-hand side is a Fredhoolm determinant for the integral operator $K_B$ with kernel 
\begin{equation}\label{eq:bor_ker}
    K_B(x, y) = \theta \int_0^1 J_{\frac{\alpha+1}{\theta}, \frac{1}{\theta}}(xt) J_{\alpha+1, \theta} ((yt)^\theta) (xt)^{\alpha} dt
\end{equation}
(``B'' for Borodin) where $J$ is Wright's function~\cite[Vol.~3 Ch.~18 eq.~(27)]{bat}
\begin{equation} \label{eq:J_fn}
    J_{a, b}(x) = \sum_{k = 0}^\infty \frac{(-x)^k}{k! \Gamma(a+bk)}.
\end{equation}

In this paper, starting from a discrete model on plane partitions, we introduce (space-) discrete and continuous Muttalib--Borodin processes (MBPs). There are discrete extended-time versions of MBEs: each slice is an appropriate MBE. The continuous MBPs have as weights (mild) generalizations of Borodin's Jacobi weight. They are the space-continuous limit of a discrete MBP coming from the study of plane partitions and already briefly introduced, mildly disguised, in~\cite[Section 2.4]{fr05}. We show that both the discrete and continuous MBPs are determinantal with explicit correlation kernels. We further compute the hard-edge limit for both and obtain a discrete time extension of Borodin's kernel. Finally, we discuss natural interpretations of the hard-edge last particle position as a certain directed last-passage percolation (LPP) time in an inhomogeneous environment. At the finite pre-limit level some (but not all) of these connections are classical~\cite{fr05}. The asymptotics of these LPP times nevertheless becomes interesting in the scaling limit. Special cases have asymptotic distributions which interpolate between the classical Tracy--Widom GUE distribution~\cite{tw94_airy} (found at the soft edge of correlated systems) and the Gumbel distribution (the edge/maximum of iid random variables). The interpolation is provided by the continuous Bessel kernel of random matrix theory~\cite{tw94_bessel, for93}, and in this regard our results are similar to Johansson's~\cite{joh08}.

\paragraph{Related work.} Before we state our main results, let us comment on works relevant to ours. In this paper we will utilize the tool of (principally specialized) Schur processes~\cite{or03} and measures~\cite{oko01} as our starting point. Forrester--Rains~\cite[Section 2.4]{fr05} already mention our discrete MBEs below (though they do not consider the time-extended MBPs) coming from principally specialized Schur measures, but they do not analyze these measures any further. Borodin--Gorin--Strahov~\cite{bgs19} also use principally specialized Schur processes, but have a different goal than ours: providing overarching combinatorial interpretations of matrix corner product processes. %Using the parametrization of our paper we suspect it is possible to extend their results to include the latter ensembles, and we hope to pursue this further jointly with E. Strahov.

On the analytical side, Kuijlaars and Molag~\cite{km19, mol20} have shown Borodin's hard-edge kernel~\ref{eq:bor_ker} is universal for a wide range of potentials, at least when $\theta^{-1} \in \Z$. The same kernel and its variants appear extensively in random matrix literature (and sometimes in combinatorics) under the name \emph{Meijer $G$-kernel}. See~\cite{aik13,kz14,bgs14,bgs19} and references therein. Our kernel of Theorem~\ref{thm:he_kernel} and~\ref{prop:fox_h} seems to generalize (a simple version of) the Meijer $G$-kernel by replacing the Meijer $G$-function with the Fox $H$-function---see~\cite{fox61} for both functions and for some Fourier analytic context where they appear. Finally, our Theorem~\ref{thm:cont_lpp} is an extension of Johansson's~\cite[Thm.~1.1a]{joh08}.

\paragraph{Main contributions.} Our main contributions are threefold (and a half):
\begin{itemize}
    \item we introduce, via plane partitions, discrete Muttalib--Borodin processes (MBPs) which are extended-time versions of the similar ensembles already introduced by Forrester--Rains~\cite{fr05}, and we show that they are determinantal point processes with explicit correlation kernels. This is contained in Theorems~\ref{thm:disc_mb_dist} and~\ref{thm:disc_corr};
    \item in the $q \to 1-$ limit of the above (where $q$ is a natural parameter), we introduce continuous MBPs which are time-extensions of the ensembles of Borodin~\cite{bor99} and~\cite{fr05}. We show these too are determinantal with explicit kernels. Furthermore, we show that at the hard edge of the support the kernel converges to the \emph{Fox $H$-kernel}, a generalization of the random matrix hard-edge kernels of Bessel~\cite{tw94_bessel} and Meijer $G$-type\footnote{For the appearance of the Meijer $G$-kernel using our discrete technique and limit, see~\cite{bgs19} where the focus is on so-called ``corner processes'' in products of random matrices.}. See Theorems~\ref{thm:cont_mb_dist}, \ref{thm:cont_corr}, and~\ref{thm:he_kernel};
    \item we naturally connect the hard-edge limits obtained to two natural models of last passage percolation with inhomogeneous weights: one in an infinite geometry where the weights decay rapidly, and one in finite geometry (which then becomes infinite in the limit). In both cases we study fluctuations of the last passage time; in the first case we see that the asymptotic distribution interpolates~\cite{joh08} between two ``classical'' extreme statistics distributions: Gumbel and Tracy--Widom GUE~\cite{tw94_airy}. These are Theorems~\ref{thm:disc_lpp} and~\ref{thm:cont_lpp}. See also Remark~\ref{rem:interpolation} for the interpolation property. As a byproduct of our interest in last passage percolation, we also discuss (existing) and introduce (new) exact sampling algorithms for the discrete and continuous MBPs we study. Informally speaking, they are variations on the Robinson--Schensted--Knuth correspondence~\cite{knu70} as reinterpreted by Fomin~\cite{fom86, fom95}.
\end{itemize}
Finally, there is a secondary and less quantifiable contribution we make with this note. We attempt to give as many equivalent formulas (and statements) for the encountered objects (and results) as is possible and feasible, perhaps out of a romantic and (mis)guided belief in the unity of mathematics. % Unisci!

\paragraph{Outline.} In Section~\ref{sec:main_results} we introduce the models under study and state the main results, split along the lines described above: discrete results, continuous results, and last passage percolation results. The discrete results are proven in Section~\ref{sec:disc_proofs}, the continuous ones in Section~\ref{sec:cont_proofs}, and the last passage percolation results have proofs in Section~\ref{sec:lpp_proofs}  which also contains (and starts with) the random sampling algorithms discussed above. We conclude in Section~\ref{sec:conclusion}. In Appendix~\ref{sec:appendix} we list alternative formulas for the relevant correlation kernels we discuss; we choose this route to minimize the technicalities of Section~\ref{sec:main_results}.

\paragraph{Notations.} Many of our formulas depend on whether a number is positive or not. We will use the following standard notation to denote \emph{positive and negative parts} of a real (in our cases always integer) number $s$:
\begin{equation} \label{eq:plus_minus_parts}
        s^+ = \max(0, s), \quad s^- = (-s)^+ = \max(-s, 0)
\end{equation}
so that $s = s^+ - s^-$. We nonetheless write, whenever feasible, a more detailed version of the formula in which we employ this notation, as it can get confusing. 

The \emph{Pochhammer and $q$-Pochhammer symbols of length} $n$ and $m$ respectively are defined as:
\begin{equation}
    (x)_n = \prod_{0 \leq i < n} (x+i) = \frac{\Gamma(x+n)}{\Gamma(x)}, \quad (x; q)_m = \prod_{0 \leq i < m} (1-xq^i)
\end{equation}
where $q$ is a number (usually in $[0,1)$), $\Gamma$ is the Euler gamma function, $n \in \N$ and $m \in \N \cup \{\infty\}$ (if $n, m$=0, both equal 1). Notice $(x; q)_m = (x; q)_\infty / (x q^m; q)_{\infty}$ for $m$ finite.

\paragraph{Acknowledgements.} The authors would like to thank A. Borodin, P. Ferrari, A. Kuijlaars, L. Molag, E. Strahov and H. Walsh for fruitful conversations regarding this article.

\section{Main results} \label{sec:main_results}

\subsection{Plane partitions and discrete Muttalib--Borodin processes} \label{sec:pp}

We use the language of (plane and ordinary) partitions to state our first set of discrete results. 

A \emph{partition} $\lambda = (\lambda_1 \geq \lambda_2 \geq \dots \geq 0)$ is a sequence of non-increasing non-negative integers which is eventually zero. The non-zero entries $\lambda_i > 0$ are called \emph{parts} and their number is the \emph{length} denoted $\ell(\lambda)$. The \emph{size} of the partition $\lambda$, denoted $|\lambda|$, is the sum of all its parts $|\lambda| = \sum_i \lambda_i$. The \emph{empty partition} of size 0 is denoted by $\emptyset$. Two partitions are called \emph{interlacing} and we write $\mu \prec \lambda$ if
\begin{equation}
    \lambda_1 \geq \mu_1 \geq \lambda_2 \geq \mu_2 \geq \dots
\end{equation}

A \emph{plane partition} $\Lambda$ with base in an $M \times N$ rectangle for $1 \leq M, N \in \N$ is an array $\Lambda = (\Lambda_{i,j})_{1 \leq i \leq M, 1 \leq j \leq N}$ of non-negative integers satisfying $\Lambda_{i, j+1} \geq \Lambda_{i, j}$ and $\Lambda_{i+1, j} \geq \Lambda_{i, j}$ for all appropriate $i, j$. It can be viewed in 3D as a pile of cubes atop an $M \times N$ floor of a room (rectangle) where we place $\Lambda_{i, j}$ cubes above integer lattice point $(i, j)$ (starting from the ``back corner'' of the room). See Figure~\ref{fig:pp_mb} for an example. 

Let us fix real parameters $0 \leq a, q < 1$ and $\eta, \theta \geq 0$ \footnote{These restrictions can be somewhat relaxed though with little benefit for our exposition, so we will not do this here} and positive integers $M, N$. Without loss of generality we fix throughout
\begin{equation}
    M \leq N.
\end{equation}

\begin{figure} [!t]
    \begin{center}
        \includegraphics[scale=0.4]{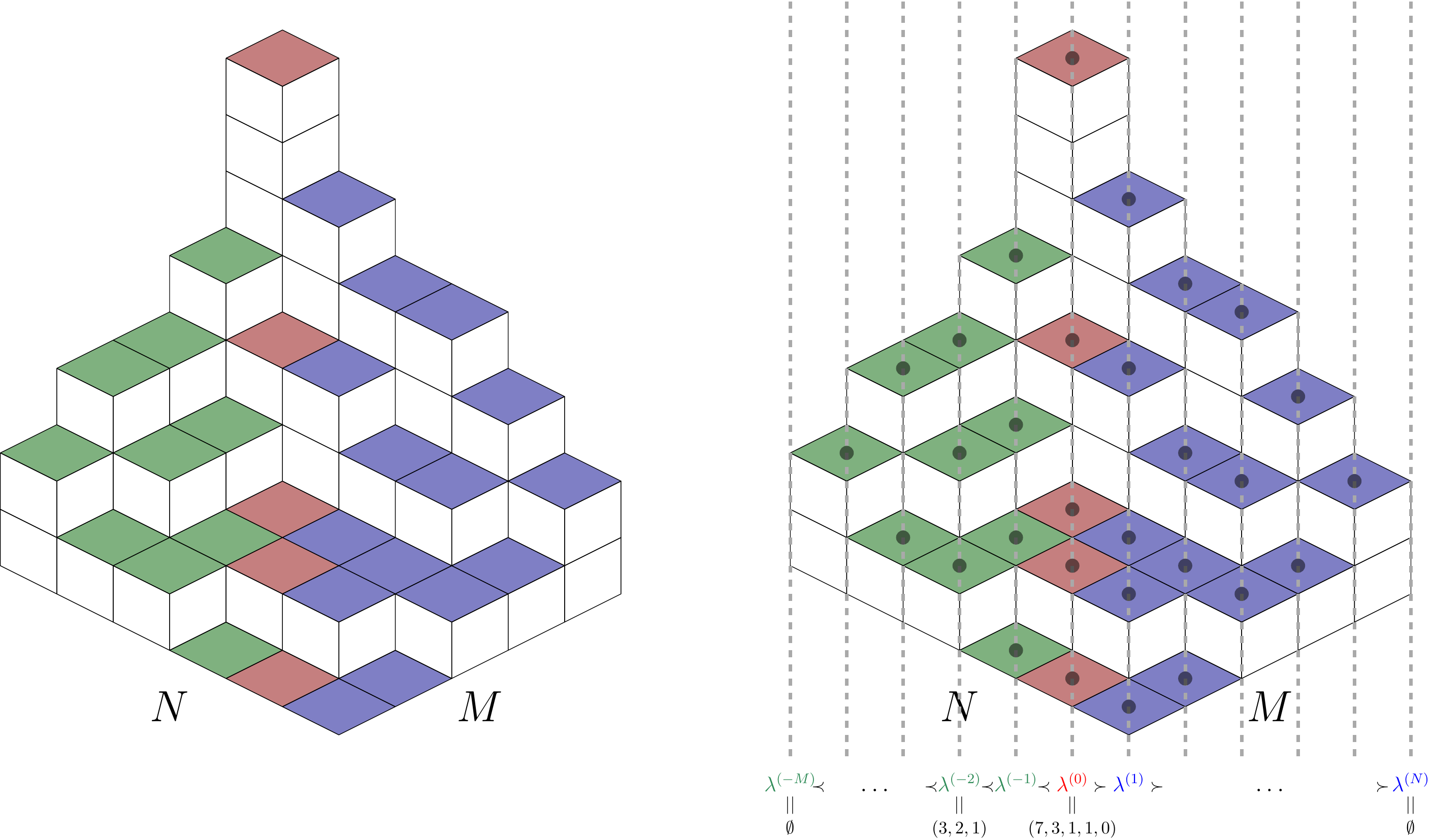}
        % \quad
        % \includegraphics[scale=0.4]{pp_mb_colors_slices_particles.pdf}
    \end{center}
    \caption{Left: a plane partition $\Lambda$ with base in an $M \times N$ rectangle for $(M,N)=(5, 6)$. The columns of cubes contributing to the left volume (=19), central volume (=12) and right volume (=30) in equation~\eqref{eq:pp_measure} have lids shaded in different colors. Right: the sequence of interlacing partitions and, up to a shift, the points of the point process associated to $\Lambda$. We have $\text{left vol} = \sum_{i=-M}^{-1} |\lambda^{(i)}| = 19, \text{central vol} = |\lambda^{(0)}| = 12, \text{right vol} = \sum_{i=1}^{N} |\lambda^{(i)}| = 30$.}
    \label{fig:pp_mb}
\end{figure}

Consider the plane partition $\Lambda$ in Figure~\ref{fig:pp_mb} with base in an $M \times N$ rectangle. We call the \emph{central volume} (the word \emph{trace} is more customary in the literature) the total number of cubes on the central slice (marked in red). The number of cubes strictly to the right of the central slice is the \emph{right volume}, the cubes to the left give rise to the \emph{left volume}. The measure we study on such objects is 
\begin{equation} \label{eq:pp_measure}
    \P (\lambda) \propto q^{\eta \text{ left vol}} \left( a q^{\frac{\eta + \theta}{2}} \right)^{ \text{ central vol}} q^{\theta \text{ right vol}}.
\end{equation}
Plane partitions $\Lambda = (\Lambda_{i,j})_{1 \leq i \leq M, 1 \leq j \leq N}$ with base in an $M \times N$ rectangle can be seen as a sequence of $M+N+1$ interlacing regular integer partitions
\begin{equation} \label{eq:disc_interlacing}
        \Lambda = (\emptyset = \lambda^{(0)} \prec \lambda^{(-M+1)} \prec \cdots \prec \lambda^{(0)} \succ \cdots \succ \lambda^{(N-1)} \succ \emptyset = \lambda^{(N)}) 
\end{equation}
via 
\begin{equation}
    \lambda^{(t)} = \begin{dcases}
        (\Lambda_{k+|t|, k})_{k \geq 1} &\text{if }t \leq 0, \\
        (\Lambda_{k, k+t})_{k \geq 1} &\text{if } t > 0
    \end{dcases}
\end{equation} (this is depicted in Figure~\ref{fig:pp_mb}; each partition is a ``vertical slice'' of $\Lambda$ with parts given by the non-zero heights of the horizontal lozenges on that slice). We think of $t$ as discrete time. The interlacing constraints dictate that the partition at time $t$ has length at most $L_t$:
\begin{equation} \label{eq:def_Lt}
    \ell(\lambda^{(t)}) \leq L_t, \quad \text{with } L_t = \begin{cases}
        M-|t| &\text{if } -M \leq t \leq 0,\\
        \min(N-t, M) &\text{if } 0 < t \leq N.
    \end{cases}
\end{equation}
The sequence of partitions $\Lambda$ gives rise to a point process on $\{ -M, \dots, -1, 0, 1, \dots, N \} \times \N$ with exactly $L_t$ points at time $-M \leq t \leq N$ (this means no points at the extremities). The distinct particle positions at time $t$, denoted by $l^{(t)}$, are obtained by a deterministic shift~\footnote{We could shift by other amounts, notably $N$ or $L_t$ or really any integer big enough.}:
\begin{equation}
    l^{(t)}_i = \lambda^{(t)}_i + M - i, \quad 1 \leq i \leq L_t.
\end{equation}
We note that the ensemble $(l^{(t)})_{t}$ is, up to deterministic shift, just the set of positions of all horizontal lozenges in the plane partition picture (see Figure~\ref{fig:pp_mb} (right)).

Our first result\footnote{in a precise sense a time-extension of the introductory remarks of~\cite[Section 2.4]{fr05}} is that under~\eqref{eq:pp_measure}, each $l^{(t)}$ from the induced point process $(l^{(t)})_{-M+1 \leq t \leq N-1}$ has a discrete Muttalib--Borodin marginal distribution. To state it, let us make the following notation:
\begin{equation}
    Q = q^{\eta}, \quad \tilde{Q} = q^{\theta}.
\end{equation}

\begin{thm} \label{thm:disc_mb_dist}
    Under the measure~\eqref{eq:pp_measure}, for each $-M+1 \leq t \leq N-1$, each ensemble/slice $l^{(t)}$ of $L_t$ points from the process $(l^{(t)})_{-M+1 \leq t \leq N-1}$ has the following discrete Muttalib--Borodin marginal distribution:
    \begin{equation}
        \P(l^{(t)} = l) \propto \prod_{1 \leq i < j  \leq L_t} (Q^{l_j}-Q^{l_i}) (\tilde{Q}^{l_j} - \tilde{Q}^{l_i}) \prod_{1 \leq i \leq L_t} w_d(l_i)
    \end{equation}
    where the discrete weight $w_d$ is given by 
    \begin{equation}
        w_d(x) = \begin{dcases}
            a^{x} (Q \tilde{Q})^{\frac{x}{2}} Q^{|t|} (\tilde{Q}^{x-|t|+1}; \tilde{Q})_{N-(M-|t|)} & \text{if } t \leq 0, \\
            a^{x} (Q \tilde{Q})^{\frac{x}{2}} \tilde{Q}^{t} (\tilde{Q}^{x+1}; \tilde{Q})_{N-t-M} & \text{if } t > 0 \text{ and } N-t \geq M, \\
            a^{x} (Q \tilde{Q})^{\frac{x}{2}} \tilde{Q}^{t} (Q^{x+N-t-M+1}; Q)_{M-(N-t)} & \text{if } t > 0 \text{ and } N-t < M.
        \end{dcases}
    \end{equation}
\end{thm}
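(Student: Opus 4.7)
Through the bijection~\eqref{eq:disc_interlacing}, the measure~\eqref{eq:pp_measure} is recognized as a principally specialized Schur process of Okounkov--Reshetikhin: the weight of a configuration factors as a product of one-variable (skew) Schur functions $s_{\lambda^{(t)}/\lambda^{(t-1)}}(\rho_t^{+})$ (for $-M+1 \le t \le 0$) and $s_{\lambda^{(t)}/\lambda^{(t+1)}}(\rho_t^{-})$ (for $0 \le t \le N-1$), where $\rho_t^{\pm}$ are explicit single variables -- suitable powers of $Q = q^\eta$ or $\tilde Q = q^\theta$, together with $a\sqrt{Q\tilde Q}$ to absorb the central weight. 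I would begin by spelling out this identification, which unlocks the entire Schur-calculus toolkit.

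To compute the marginal at time $t$, I would then fix $\mu = \lambda^{(t)}$ and factor the total weight of configurations with $\lambda^{(t)} = \mu$ as (the explicit weight on $\mu$ from~\eqref{eq:pp_measure}) $\cdot L(\mu) \cdot R(\mu)$, where $L(\mu)$ and $R(\mu)$ sum over configurations to the left and right of time $t$ ending (resp.\ starting) at $\mu$. The side not containing the apex $\lambda^{(0)}$ is a monotone interlacing chain, and matching variables step by step in the Schur branching rule identifies it directly with a single principally specialized Schur function -- for $t > 0$, for instance, $R(\mu) = s_\mu(1, \tilde Q, \ldots, \tilde Q^{N-t-1})$.

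The key step is to handle the side containing the apex, which is an up-down chain through $\nu = \lambda^{(0)}$. Fixing $\nu$ first, the chain splits into a straight Schur $s_\nu$ on the monotone half emanating from $\emptyset$, times a skew Schur $s_{\nu/\mu}$ on the half attached to $\mu$, both at a principal specialization, modulo an explicit $\mu$-dependent prefactor of the form $\tilde Q^{(t-1)|\mu|}$ (for $t > 0$; $Q^{(|t|-1)|\mu|}$ for $t < 0$) coming from the overall geometric shift needed to express the internal-weight sum as a skew Schur. The ensuing sum over $\nu$ is the classical skew Cauchy identity
\[
    \sum_\nu s_{\nu/\mu}(u)\, s_\nu(v) = s_\mu(v) \prod_{i,j} \frac{1}{1-u_i v_j},
\]
which collapses it to $s_\mu$ at the principal specialization, times a $\mu$-independent constant that is absorbed into the partition function. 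Combining everything gives, uniformly in $t$,
\[
    \P(\lambda^{(t)} = \mu) \propto c_t^{|\mu|}\, s_\mu(1, Q, \ldots, Q^{M-1}) \, s_\mu(1, \tilde Q, \ldots, \tilde Q^{N-t-1})
\]
(with the mirror image when $t < 0$) for an explicit monomial $c_t$ in $a$, $Q$, $\tilde Q$ read off from the tracking of prefactors.

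The final step converts this Schur-product expression into the stated Muttalib--Borodin form by the Weyl principal specialization identity $s_\mu(1, q, \ldots, q^{n-1}) = \prod_{1 \le i<j\le n}(q^{l'_j} - q^{l'_i})/\prod_{1 \le i<j \le n}(q^{i-1} - q^{j-1})$ with $l'_i = \mu_i + n - i$. Under the ensemble shift $l_i = \mu_i + M - i$ of the statement, the two Schur factors together produce the double Vandermonde $\prod_{1\le i<j\le L_t}(Q^{l_j} - Q^{l_i})(\tilde Q^{l_j} - \tilde Q^{l_i})$, while the rows indexed by $L_t < i$ -- those in whichever specialization has more variables than the ensemble has points -- telescope by a direct computation into the $q$-Pochhammer symbol appearing in $w_d$. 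The three cases of $w_d$ correspond precisely to whether this ``overflow'' happens on the $\tilde Q$ side ($L_t < N-t$) or on the $Q$ side ($L_t < M$, namely $t > 0$ with $N-t < M$). The main obstacle I anticipate is exactly this last round of bookkeeping: tracking which side has overflow, ensuring the overflow rows consolidate into the Pochhammer with the advertised starting index and length, and confirming that all the remaining $\mu$-independent prefactors -- powers of $q$, sub-Vandermondes over overflow indices, and the skew-Cauchy constant -- cleanly fold into the normalizing constant. No individual step is deep, but the index gymnastics leaves considerable room for off-by-one slips.
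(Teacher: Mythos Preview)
Your proposal is correct and follows essentially the same route as the paper: identify the measure as a principally specialized Schur process, obtain the marginal at time $t$ as a Schur measure $\P(\lambda^{(t)}=\mu)\propto c_t^{|\mu|}\,s_\mu(1,Q,\dots)\,s_\mu(1,\tilde Q,\dots)$, then apply the Weyl principal specialization and split the longer Vandermonde into a length-$L_t$ Vandermonde times the univariate $q$-Pochhammer factors that become $w_d$. The only difference is that you derive the Schur-measure marginal by hand via the branching rule and the skew Cauchy identity, whereas the paper simply invokes this as a known property of Schur processes from Okounkov--Reshetikhin~\cite{or03}; the subsequent bookkeeping (overflow rows, three cases for $w_d$) is identical.
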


\begin{rem}
    The normalization constant above can be made explicit (for example, by using the Cauchy--Binet formula).
\end{rem}

\begin{rem}
    By computing the weight $w^{d}$ in some special cases we obtain the following limits: 
    \begin{itemize}
        \item (\emph{Meixner}) when $\eta = \theta = 0$ ($Q = \tilde{Q} = 1$), the ensemble $l^{(t)}$ above is Johansson's~\cite{joh00} Meixner orthogonal polynomial ensemble appearing in point-to-point directed last passage percolation with geometric weights;
        \item (\emph{little $q$-Jacobi}) when $a=1$ and $0 < \eta = \theta$ ($=1$ would suffice) we obtain the little $q$-Jacobi orthogonal polynomial ensemble. This latter has not appeared before to the best of our knowledge. It is the orthogonal polynomial ensemble behind $q^{\rm volume}$-weighted plane partitions studied in~\cite{bnh84, ck01, or03} (see the latter for the explicit determinantal connections to our work).
    \end{itemize}
\end{rem}

As was already observed by Muttalib~\cite{mut95}, each ensemble $l^{(t)}$ is a determinantal bi-orthogonal ensemble. In fact, the whole extended process is (unsurprisingly) determinantal with an explicit (discrete time-) extended correlation kernel. This is our next result.

\begin{thm} \label{thm:disc_corr}
    Fix a positive integer $n$, discrete times $t_1 < \dots < t_n$ between $-M+1$ and $N-1$, and non-negative particle positions $k_i, 1 \leq i \leq n$. The process $(l^{(t)})_{-M+1 \leq t \leq N-1}$ is determinantal: 
    \begin{equation}
        \P(l^{(t_i)} \text{ has a particle at position } k_i, \ \forall \ 1 \leq i \leq n) = \det_{1 \leq i, j \leq n} K_d (t_i, k_i; t_j, k_j)
    \end{equation}
    where the extended correlation kernel $K_d$ is given by 
    \begin{equation}
        K_d (s,k; t,\ell) = \oint\limits_{|z|=1+\delta} \!\!\! \frac{dz}{2 \pi i} \!\!\! \oint\limits_{|w|=1-\delta} \!\!\! \frac{dw}{2 \pi i}  \frac{F_d(s,z)}{F_d(t,w)} \frac{w^{\ell-M}}{z^{k-M+1}} \frac{1}{z-w} - \Id_{[s > t]} \!\!\! \oint\limits_{|z|=1+\delta} \!\!\! \frac{dz}{2 \pi i} \frac{F_d(s,z)}{F_d(t,z)} z^{\ell-k-1} 
    \end{equation}
    with
    \begin{equation}
        F_d(s,z) = \begin{dcases}
            \frac{(\sqrt{a} \tilde{Q}^{1/2}/z; \tilde{Q})_{N}}{(\sqrt{a} Q^{|s|+1/2}z; Q)_{M-|s|}}, & \text{if } s \leq 0, \\
            \frac{(\sqrt{a} \tilde{Q}^{s + 1/2}/z; \tilde{Q})_{N-s}}{(\sqrt{a} Q^{1/2}z; Q)_{M}}, & \text{if } s > 0
        \end{dcases} 
        = \frac{(\sqrt{a} \tilde{Q}^{s^+ + 1/2}/z; \tilde{Q})_{N-s^+}}{(\sqrt{a} Q^{s^- + 1/2}z; Q)_{M-s^-}} 
    \end{equation}
    and with $\delta$ small enough so that the $w$ contour contains all finitely many poles of the integrand of the form $\sqrt{a} \tilde{Q}^{1/2+\cdots}$ and the $z$ contour excludes all finitely many poles of the form $\left( \sqrt{a} Q^{1/2+\cdots}\right)^{-1}$---see Figure~\ref{fig:K_d_cont}.
\end{thm}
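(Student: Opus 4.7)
The strategy is to recognise the plane-partition measure~\eqref{eq:pp_measure} as a principally specialised Schur process and invoke the Okounkov--Reshetikhin determinantal theorem~\cite{or03} for its extended correlation kernel.

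First, via the interlacing decomposition~\eqref{eq:disc_interlacing}, a plane partition with base in an $M \times N$ rectangle is equivalent to a sequence of partitions with horizontal-strip interlacings. Since $s_{\mu/\nu}(x) = x^{|\mu|-|\nu|}$ whenever $\nu \prec \mu$ and vanishes otherwise, I would look for single-variable specialisations $a_t$, $b_t$ such that
\[
    \P(\Lambda) \propto \prod_{t=-M+1}^{0} s_{\lambda^{(t)}/\lambda^{(t-1)}}(a_t) \cdot \prod_{t=1}^{N} s_{\lambda^{(t-1)}/\lambda^{(t)}}(b_t).
\]
Equating the coefficients of each $|\lambda^{(t)}|$ between this expression and~\eqref{eq:pp_measure} yields a telescoping system whose natural solution is
\[
    a_t = \sqrt{a}\,Q^{|t|+1/2} \quad (-M+1 \leq t \leq 0), \qquad b_t = \sqrt{a}\,\tilde Q^{\,t-1/2} \quad (1 \leq t \leq N),
\]
all of which lie in $[0,1)$ under the assumptions $0 \leq a, q < 1$, ensuring convergence of the relevant sums.

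Second, I would apply the standard double-contour-integral formula for the extended correlation kernel of such a Schur process. Under the deterministic particle shift $l_i^{(t)} = \lambda_i^{(t)} + M - i$, this formula produces the kernel stated in the theorem with
\[
    F_d(s, z) = \prod_{u > s^+} (1 - b_u/z) \cdot \prod_{t \leq -s^-} (1 - a_t z)^{-1},
\]
where the first product ranges over \emph{falling} edges to the right of time $s$ and the second over \emph{rising} edges to the left. The single-integral correction $-\Id_{[s>t]}\oint\cdots$ captures the residue at $z=w$ picked up when the two contours must be swapped to accommodate the Heaviside structure of the one-step two-point function.

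Finally, substituting the explicit values of $a_t$, $b_t$ and collecting the resulting geometric products into $q$-Pochhammer symbols yields
\[
    F_d(s, z) = \frac{(\sqrt a\,\tilde Q^{s^+ + 1/2}/z;\,\tilde Q)_{N-s^+}}{(\sqrt a\,Q^{s^- + 1/2} z;\,Q)_{M-s^-}},
\]
exactly as claimed. The contour conditions $|z|=1+\delta$, $|w|=1-\delta$ are imposed so that $\delta > 0$ separates the poles of $F_d(s,\cdot)$ (outside the unit circle) from those of $1/F_d(t,\cdot)$ (inside the unit circle), consistent with $a_t, b_t \in [0,1)$. The main obstacle is a purely bookkeeping one: consistently orienting \emph{rising} versus \emph{falling} edges across the direction change at $t=0$, tracking the $M$-shift in the particle description, and reconciling the piecewise nature of $F_d$ across the $s \lessgtr 0$ case distinction with the single uniform formula above.
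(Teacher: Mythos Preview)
Your proposal is correct and follows essentially the same route as the paper: identify the measure~\eqref{eq:pp_measure} as the principally specialised Schur process~\eqref{eq:schur_proc} with single-variable specialisations $\sqrt{a}\,Q^{|t|+1/2}$ on the ascending side and $\sqrt{a}\,\tilde Q^{\,t-1/2}$ on the descending side, invoke the Okounkov--Reshetikhin extended kernel, interpret the single-integral term as the residue from swapping the $z$ and $w$ contours when $s>t$, and then shift from the half-integer process $\lambda_i^{(t)}-i+\tfrac12$ to $l_i^{(t)}=\lambda_i^{(t)}+M-i$. Your compact index conventions $u>s^+$ and $t\le -s^-$ for the falling and rising edges are exactly what collapse the two cases $s\le 0$ and $s>0$ into the single $q$-Pochhammer expression for $F_d$, so the ``bookkeeping obstacle'' you flag is already resolved by your own notation.
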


\begin{rem}
    The kernel $K_d$ has two alternative formulas, one of which is an explicit sum of basic hypergeometric type---see Proposition~\ref{prop:K_d_alt_2}. 
\end{rem}

\begin{rem} 
    We notice that $\Id_{[s > t]} \frac{F_d(s,z)}{F_d(t, z)}$ simplifies considerably:
    \begin{equation} \label{eq:V_int_d}
        \begin{split}
        \Id_{[s > t]} \frac{F_d(s,z)}{F_d(t, z)} &= \begin{dcases}
            \frac{1}{(\sqrt{a} \tilde{Q}^{t+1/2}/z; \tilde{Q})_{s-t}} & \text{if } s > t \geq 0, \\
            \frac{1}{(\sqrt{a} Q^{|s|+1/2} z; Q)_{|t|-|s|}} & \text{if } 0 \geq s > t, \\
            \frac{1}{(\sqrt{a} \tilde{Q}^{1/2}/z; \tilde{Q})_{s} (\sqrt{a} Q^{1/2} z; Q)_{|t|} } & \text{if } s \geq 0 > t
        \end{dcases} \\
        &=  \frac{\Id_{[s > t]}}{(\sqrt{a} \tilde{Q}^{t^+ + 1/2}/z; \tilde{Q})_{s^+ - t^+} (\sqrt{a} Q^{s^- + 1/2} z; Q)_{t^- - s^-} }.
    \end{split}
    \end{equation}
\end{rem}

\subsection{Continuous Muttalib--Borodin processes and the hard edge limit} \label{sec:cont_mbe}

In this section we obtain a Jacobi-like Muttalib--Borodin process (recall it means each slice is an MB ensemble) as a $q \to 1$ limit of the discrete results above. Each slice from this process is a vector $\x = (x_1 < \dots < x_n)$ of real numbers almost surely strictly between 0 and 1 ordered increasingly---note this is the opposite convention to that used for partitions. We call as before $n = \ell(\x)$ the length of $\x$. Two such vectors are interlacing and we write $\y \prec \x$ if 
\begin{equation} \label{eq:cont_interlacing}
    x_n > y_n > x_{n-1} > y_{n-1} > \cdots
\end{equation}  
where we note again we use the opposite interlacing convention for continuous vectors than we did for partitions. Conventionally, it is sometimes useful to consider a 0-th part $x_0 = 0$ and infinitely many parts (padded after $x_n$) $x_{n+1} = x_{n+2} = \dots = 1$ much like partitions $\lambda$ can be padded with infinitely many trailing zeros and we can write $\lambda_0 = \infty$ whenever the situation requires it.

\begin{figure} [!t]
    \begin{center}
        \includegraphics[scale=0.5]{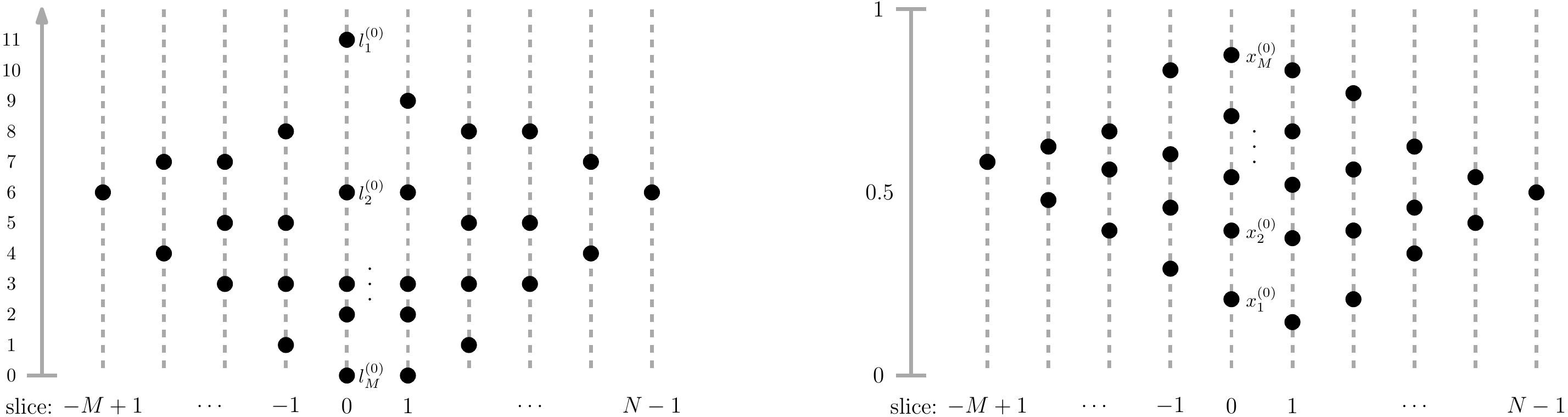}
    \end{center}
    \caption{Left: the discrete process $(l^{(t)})_{-M+1 \leq t \leq N-1}$ corresponding to the lozenges of Figure~\ref{fig:pp_mb}. Right: a possible instantiation of the continuous process $(\x^{(t)})_{-M+1 \leq t \leq N-1}$. Note we index the particles differently in the continuous and discrete settings.}
    \label{fig:proc_mb}
\end{figure}

\subsubsection{Finite-size results}

Our first result, a time-extension of~\cite[Prop.~6]{fr05}, is the construction of $(\x^{(t)})_t$. A possible instantiation of this limit process is given in Figure~\ref{fig:proc_mb} (right).

\begin{thm} \label{thm:cont_mb_dist}
Fix $\alpha \geq 0$. In the following $q \to 1-$ limit:
\begin{equation}
    q = e^{-\epsilon}, \quad a = e^{-\alpha \epsilon}, \quad \lambda^{(t)}_i = -\frac{\log x_i^{(t)}}{\epsilon}, \quad \epsilon \to 0+
\end{equation}
the discrete-space Muttalib--Borodin process $(l^{(t)})_{-M+1 \leq t \leq N-1}$ converges, in the sense of weak convergence of finite dimensional distributions, to a continuous-space (and discrete-time) process ${\bf X} = (\x^{(t)})_{-M+1 \leq t \leq N-1}$ where each time slice $\x^{(t)}$ has $L_t$ points supported in $[0,1]$, the slices are interlacing as follows:
\begin{equation} \label{eq:cont_interlacing_2}
    \emptyset \prec \x^{(-M+1)} \prec \cdots \prec \x^{(-1)} \prec \x^{(0)} \succ \x^{(1)} \succ \cdots \succ \x^{(N-1)} \succ \emptyset
\end{equation}
and each $\x^{(t)}$ is distributed according to the following Muttalib--Borodin measure:
\begin{equation}
    \P(\x^{(t)} = \x) dx_1 \dots dx_{L_t} \propto \prod_{1 \leq i < j  \leq L_t} (x_j^\eta-x_i^\eta) (x_j^\theta-x_i^\theta) \prod_{1 \leq i \leq L_t} w_c(x_i) dx_i
\end{equation}
where
\begin{equation}
    w_c(x) = \begin{dcases}
        x^{\alpha + \frac{\eta + \theta}{2} + |t| \eta - 1} (1-x^\theta)^{N-(M-|t|)} & \text{if } t \leq 0, \\
        x^{\alpha + \frac{\eta + \theta}{2} + t \theta - 1} (1-x^\theta)^{N-t-M} & \text{if } t > 0 \text{ and } N-t \geq M, \\
        x^{\alpha + \frac{\eta + \theta}{2} + t \theta - 1} (1-x^\eta)^{M-(N-t)} & \text{if } t > 0 \text{ and } N-t < M.
    \end{dcases}
\end{equation}
\end{thm}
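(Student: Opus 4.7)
The plan is to start from the discrete joint distribution of the full interlacing array $(\lambda^{(t)})_{-M+1 \leq t \leq N-1}$ induced by~\eqref{eq:pp_measure} and perform term-by-term asymptotic analysis under the prescribed scaling $q=e^{-\epsilon}$, $a=e^{-\alpha \epsilon}$, $\lambda_i^{(t)}=-\log x_i^{(t)}/\epsilon$. Equivalently, since Theorem~\ref{thm:disc_mb_dist} already gives the marginal density of each slice $l^{(t)}$ in closed product form, one may work slice by slice and then re-assemble via the interlacing from~\eqref{eq:disc_interlacing}. I would take the latter (marginal-first) route.

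The computation reduces to four elementary observations. First, the substitutions give the exact identities $Q^{l_i}=x_i^\eta$, $\tilde{Q}^{l_i}=x_i^\theta$, $a^{l_i}(Q\tilde{Q})^{l_i/2}=x_i^{\alpha+(\eta+\theta)/2}$, so that the discrete Vandermonde-like product $\prod_{i<j}(Q^{l_j}-Q^{l_i})(\tilde{Q}^{l_j}-\tilde{Q}^{l_i})$ converts, with no $\epsilon$-correction, into the continuous Muttalib--Borodin factor $\prod_{i<j}(x_j^\eta-x_i^\eta)(x_j^\theta-x_i^\theta)$. Second, each finite $q$-Pochhammer appearing in $w_d$ expands as $(\tilde{Q}^{l_i-|t|+1};\tilde{Q})_m=\prod_{k=0}^{m-1}(1-x_i^\theta\,\tilde{Q}^{-|t|+1+k})$; since $\tilde{Q}=e^{-\theta \epsilon}\to 1$, each factor tends to $1-x_i^\theta$, and the whole Pochhammer converges to $(1-x_i^\theta)^m$ (and analogously for the $Q$-type Pochhammer in the third case of $w_d$). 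Third, the change from discrete mass to continuous density introduces a Jacobian $|dl_i/dx_i|=1/(\epsilon x_i)$ per particle, giving the extra factor $\prod_i x_i^{-1}$ (with the global $\epsilon^{-L_t}$ absorbed in normalization); this is exactly the source of the $-1$ in the exponent of $w_c$. Fourth, the residual $x$-independent prefactors ($Q^{|t|}$, $\tilde{Q}^t$, and similar) tend to constants and are swallowed by normalization.

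Combining these ingredients reproduces the claimed Jacobi-like weight $w_c$ together with the Muttalib--Borodin Vandermonde. The limit interlacing~\eqref{eq:cont_interlacing_2} follows from its discrete counterpart~\eqref{eq:disc_interlacing}: under $l_i=\lambda_i+M-i$ the discrete interlacing of adjacent partitions becomes ordinary interlacing of the integer sequences $l^{(t)}$, which under $x_i=e^{-\epsilon l_i}$ translates into the strict continuous interlacing stated in~\eqref{eq:cont_interlacing_2}. The non-strict discrete inequalities become strict in the limit because the limiting density is absolutely continuous and coincidences have zero measure.

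The main (mild) obstacle I expect is promoting the pointwise convergence of joint densities on the interlacing state space inside $(0,1)^{\sum_t L_t}$ to genuine weak convergence of finite-dimensional distributions. Since the state space is compact and the limit is a bona fide probability density, one can close the argument by dominated convergence applied to the explicit product form above, or equivalently by Scheffé's lemma once pointwise convergence of joint densities is established; once this is in hand, extracting the marginal law of each $\x^{(t)}$ and matching it against the stated Muttalib--Borodin measure with weight $w_c$ is immediate.
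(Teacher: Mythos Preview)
Your proposal is correct and follows essentially the same route as the paper: a term-by-term $q\to 1-$ limit of the discrete (joint and marginal) densities using the substitutions $Q^{l_i}\to x_i^\eta$, the finite $q$-Pochhammer asymptotics, and the Jacobian $dl_i\propto dx_i/x_i$ supplying the extra $x^{-1}$ in $w_c$. The only cosmetic differences are that the paper also writes out the full joint density of ${\bf X}$ explicitly and defers the weak-convergence justification to Borodin--Gorin~\cite[Section~2.3]{bg15} rather than invoking Scheff\'e's lemma.
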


\begin{rem}
    The weight $w_c$ is a deformation of the classical Jacobi weight. Moreover the case $t=0, M=N, \theta=1$ recovers, after some reparametrization, the Jacobi-like Muttalib--Borodin ensemble considered by Borodin~\cite{bor99}.
\end{rem}

The process thus obtained is determinantal, which is our next result.

\begin{thm} \label{thm:cont_corr}
    The Muttalib--Borodin process $(\x^{(t)})_{-M+1 \leq t \leq N-1}$ has determinantal correlations. Fix $n$ a positive integer, $t_1 < \dots < t_n$ discrete times between $-M+1$ and $N-1$, and $x_k \in (0,1) \ \forall 1 \leq k \leq n$.
\begin{equation}
    \P \left[\bigcap_{k=1}^n \left\{ \text{slice } \x^{(t_k)} \text{ has a particle in } (x_k, x_k+dx_k) \right\} \right] \prod_{k=1}^{n} dx_k = \det_{1 \leq k, \ell \leq n} K_c(t_k, x_k; t_\ell, x_\ell) \prod_{k=1}^{n} dx_k
\end{equation}
with extended correlation kernel given by
\begin{equation}
    K_c (s, x; t, y) = \frac{1}{\sqrt{xy}} \int\limits_{\delta + i \R} \frac{d \zeta}{2 \pi i} \int\limits_{-\delta + i \R} \frac{d \omega}{2 \pi i} \frac{F_c(s, \zeta)}{F_c(t, \omega)} \frac{x^\zeta}{y^\omega} \frac{1} {\zeta-\omega} - \frac{\Id_{[s > t]}}{\sqrt{xy}} \int\limits_{\delta + i \R} \frac{d \zeta}{2 \pi i} \frac{F_c(s, \zeta)}{F_c(t, \zeta)} (x y^{-1})^{\zeta}
\end{equation}
where
\begin{equation}
    \begin{split}
    F_c(s, \zeta) = \begin{dcases}
        \frac{\theta^N}{\eta^{M-|s|}} \cdot \frac{(\frac{\alpha}{2 \theta} + \frac{\zeta}{\theta} + \frac{1}{2})_{N} } {(\frac{\alpha}{2 \eta} - \frac{\zeta}{\eta} + |s| + \frac{1}{2})_{M-|s|}} & \text{if } s \leq 0, \\
        \frac{\theta^{N-s}}{\eta^{M}} \cdot\frac{(\frac{\alpha}{2 \theta} + \frac{\zeta}{\theta} + s + \frac{1}{2})_{N-s} } {(\frac{\alpha}{2 \eta} - \frac{\zeta}{\eta} + \frac{1}{2})_{M}} & \text{if } s > 0
    \end{dcases}
    = \frac{ \theta^{N - s^+}}{\eta^{M - s^-}} \cdot \frac{(\frac{\alpha}{2 \theta} + \frac{\zeta}{\theta} + s^+ + \frac{1}{2})_{N - s^+} } {(\frac{\alpha}{2 \eta} - \frac{\zeta}{\eta} + s^- + \frac{1}{2})_{M - s^-}}
    \end{split}
\end{equation}
with $(x)_n = \prod_{0 \leq i < n} (x+i)=\frac{\Gamma(x+n)}{\Gamma(x)}$ the Pochhammer symbol and where the contours are bottom-to-top oriented vertical lines such that all finitely many poles of the integrands of the form $-\frac{\alpha}{2}-\frac{\theta}{2}-\cdots$ lie to the left and all finitely many poles of the form $\frac{\alpha}{2}+\frac{\eta}{2}+\cdots$ lie on the right (any $0 < \delta < \tfrac12$ would do) of both.
\end{thm}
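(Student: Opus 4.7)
The plan is to derive $K_c$ as the $\epsilon\to 0+$ scaling limit of the discrete kernel $K_d$ from Theorem~\ref{thm:disc_corr}, under the scaling of Theorem~\ref{thm:cont_mb_dist}. Since Theorem~\ref{thm:cont_mb_dist} already provides weak convergence of the discrete process to the continuous process, it suffices to prove pointwise convergence of the rescaled discrete correlation kernel to $K_c$ together with an integrable/Hadamard-type domination to pass to the limit in the finite-dimensional determinantal formulas.

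The first step is the change of variables in the double contour. Write $z = e^{\epsilon\zeta}$, $w = e^{\epsilon\omega}$, $\sqrt{a} = e^{-\alpha\epsilon/2}$, $Q = e^{-\eta\epsilon}$, $\tilde{Q} = e^{-\theta\epsilon}$, and $k = -(\log x)/\epsilon + M - i + o(1)$, $\ell = -(\log y)/\epsilon + M - j + o(1)$. Then $dz\,dw \sim \epsilon^{2}\,d\zeta\,d\omega$, $(z-w)^{-1}\sim (\epsilon(\zeta-\omega))^{-1}$, $z^{-(k-M+1)}\to x^{\zeta}$ and $w^{\ell-M}\to y^{-\omega}$. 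The circular contours $|z|=1\pm\delta$ shrink to arbitrarily small neighborhoods of $1$; after rescaling they become the vertical lines $\pm\delta + i\mathbb{R}$ (the poles of $F_d$ of the form $\sqrt{a}\tilde{Q}^{\cdots}$ translate to those of $F_c$ at $\zeta = -\alpha/2 - \theta(\cdots) < 0$ on the left, while poles $(\sqrt{a} Q^{\cdots})^{-1}$ become $\zeta = \alpha/2 + \eta(\cdots) > 0$ on the right, justifying the contour specification).

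Next I compute the asymptotics of $F_d$. The basic input is the standard $q$-Pochhammer limit $(e^{-\epsilon u};e^{-\epsilon v})_n \sim \epsilon^{n} v^{n} (u/v)_n$, applied to both numerator and denominator of $F_d(s,e^{\epsilon\zeta})$. A direct computation gives
\begin{equation*}
F_d(s,e^{\epsilon\zeta}) \;\sim\; \epsilon^{(N-s^+) - (M-s^-)}\,F_c(s,\zeta) \;=\; \epsilon^{N-M-s}\, F_c(s,\zeta).
\end{equation*}
In the ratio $F_d(s,z)/F_d(t,w)$ this yields an overall $\epsilon^{t-s}$, which is purely a gauge factor $c(s)/c(t)$ and therefore disappears upon taking the determinant of the correlation matrix. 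Combined with the earlier $\epsilon$ coming from $dz\,dw/(z-w)$, the rescaled first term of $K_d$ produces exactly $\epsilon\cdot (\text{first term of }K_c)$. The second (single-integral) term is handled in the same way, using the simplified product form (\ref{eq:V_int_d}) whose $q$-Pochhammer symbols tend to the corresponding ordinary Pochhammer symbols, and using $z^{\ell-k-1}\to (x/y)^{\zeta}$.

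Finally, to turn the $\epsilon$ prefactor into the $(xy)^{-1/2}$ prefactor of $K_c$, one notes that the change of variables $\lambda = -(\log x)/\epsilon$ carries counting measure on $\mathbb{Z}$ to Lebesgue measure via $d\lambda = dx/(\epsilon x)$, so the discrete correlation functions $\det K_d$ must be divided by $\prod_i \epsilon x_i$ to produce the continuous densities; equivalently, after a symmetric gauge conjugation by $\sqrt{x}$ one extracts the factor $(xy)^{-1/2}$ while the $\epsilon^n$'s cancel against the $n$-fold product of the $\epsilon$'s produced in the previous paragraph. The main technical obstacle is the uniform control needed to exchange the $\epsilon\to 0$ limit with the contour integrals: one must bound the integrand in $K_d$ uniformly in $\epsilon$ along the rescaled vertical lines, which follows from the exponential decay of ratios of $q$-Pochhammer symbols at large $|\mathrm{Im}\,\zeta|$, together with a Hadamard-type determinantal bound that allows one to pass the limit through the $n\times n$ determinant and infer weak convergence of finite-dimensional distributions matching Theorem~\ref{thm:cont_mb_dist}.
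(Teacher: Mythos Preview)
Your proposal is correct and follows essentially the same route as the paper: change variables $(z,w)=(e^{\epsilon\zeta},e^{\epsilon\omega})$ in $K_d$, use the elementary $q$-Pochhammer asymptotics $(u^c;u)_n/(1-u)^n\to (c)_n$ to obtain $F_d\to \epsilon^{\cdots}F_c$, collect the resulting power of $\epsilon$ (the paper writes this as $\epsilon^{s-t-1}K_d\to K_c$, which is exactly your gauge-factor-plus-Jacobian accounting), and justify the limit--integral interchange by a boundedness/dominated-convergence argument along the rescaled contours. The only cosmetic slip is the stray ``$-i$'' in your formula for $k$ (the shift by $M-i$ is already absorbed in passing from $\lambda^{(t)}$ to $l^{(t)}$, so one just wants $k\sim -(\log x)/\epsilon$), but this does not affect the argument.
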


\begin{rem} \label{rem:cont_K_c}
    Let us make an important remark on the contours in the double contour integral part of $K_c$. They are improperly written above. One choice is to take them as closed enclosing only the relevant poles: these are the closed contours (and essentially the only content) of Proposition~\ref{prop:K_c_alt_1} and of Figure~\ref{fig:K_c_cont}, up to reversing the $\zeta$ orientation. Because of the $\zeta$ decay at real $\infty$ (for $x^\zeta$) and the $\omega$ decay at real $-\infty$ (for $y^{-\omega}$), we can open up the $\zeta$ and $\omega$ contours at $\infty$ and $-\infty$ respectively, starting from the closed ones, without changing the value of the complex integral. They become the Hankel contours of Figure~\ref{fig:K_c_cont}. We can then move these latter ones on the Riemann sphere to obtain the vertical contours described in the statement of the theorem above. A similar remark holds for the single contour integral of $K_c$. See also Proposition~\ref{prop:K_c_alt_1}. The reason to use vertical contours is purely of convenience, as it makes the hard-edge limit of Theorem~\ref{thm:he_kernel} transparent, and it also allows us to write the kernel in alternative ways using known formulas for the Fox $H$-function---see Proposition~\ref{prop:K_c_alt_0}. Finally, the three important features of any contour choices in the double contour integral are: that they do not intersect; that they enclose only the relevant (finitely many) poles; and that $\zeta$ is to the right of $\omega$. 
\end{rem}

\begin{rem}
    We notice that $\Id_{[s > t]} \frac{F_c(s,\zeta)}{F_c(t, \zeta)}$ simplifies considerably:
    \begin{equation} \label{eq:V_int_c}
        \begin{split}
        \Id_{[s > t]} \frac{F_c(s,\zeta)}{F_c(t, \zeta)} &= 
        \begin{dcases}
            \frac{\theta^{t-s}}{(\frac{\alpha}{2 \theta} + \frac{\zeta}{\theta} + t + \frac{1}{2})_{s-t}} & \text{if } s > t \geq 0, \\
            \frac{\eta^{|s|-|t|}}{(\frac{\alpha}{2 \eta} - \frac{\zeta}{\eta} + |s| + \frac{1}{2})_{|t|-|s|}} & \text{if } 0 \geq s > t, \\
            \frac{\eta^{-|t|} \theta^{-s}}{(\frac{\alpha}{2 \theta} + \frac{\zeta}{\theta} + \frac{1}{2})_{s} (\frac{\alpha} {2 \eta} - \frac{\zeta}{\eta} + \frac{1}{2})_{|t|} } & \text{if } s \geq 0 > t 
        \end{dcases} \\
        &= \frac{\Id_{[s > t]} \eta^{s^- - t^-} \theta^{t^+ - s^+}} {(\frac{\alpha}{2 \theta} + \frac{\zeta}{\theta} + t^+ + \frac{1}{2})_{s^+ - t^+} (\frac{\alpha} {2 \eta} - \frac{\zeta}{\eta} + s^- + \frac{1}{2})_{t^- - s^-} }.
    \end{split}
    \end{equation}
\end{rem}

\begin{rem}
    It is not obvious from the formula we have for $K_c$ that it reduces to Borodin's finite kernel from~\cite[Prop.~3.3]{bor99} in the case $s=t=0, M=N, \eta = 1$. This is indeed the case and is explained in Remark~\ref{rem:bor_lim_fin} right after we rewrite $K_c$ in a more suitable form in Proposition~\ref{prop:K_c_alt_2} of Appendix~\ref{sec:alt_c}. 
\end{rem}

\begin{figure}[!h]
    \begin{center}
        \includegraphics[scale=0.5]{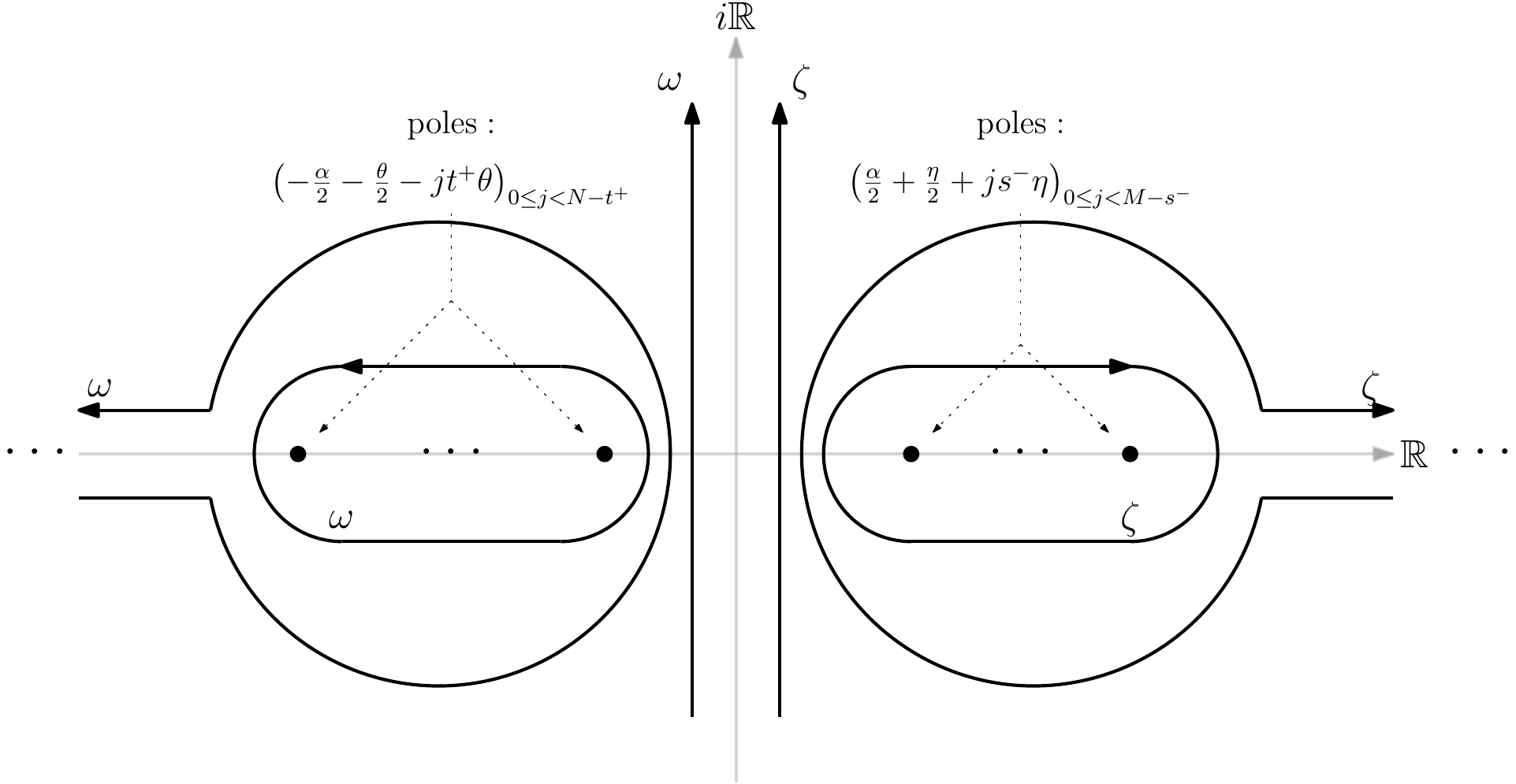}
    \end{center}
    \caption{Three choices of contours for the double contour integral part of Theorem~\ref{thm:cont_corr}. The ones in the statement are the vertical ones; properly one starts with the closed ones encircling only the relevant poles and opens them up at $\infty$ (for $\zeta$) and $-\infty$ (for $\omega$) to obtain the other two sets. The intermediate Hankel contours also appear sometimes in the literature, so we decided to include depict them as well.}
    \label{fig:K_c_cont}
\end{figure}

\subsubsection{The hard-edge limit}

Finally, let us consider the so-called ``hard-edge scaling'' of the above process. We will do so around the hard-edge of the support $x \approx 0$. We do not have good arguments to handle scaling around $x \approx 1$. 

The first result concerns the hard-edge limit of the kernel. We will let $M, N \to \infty$, and it turns out we can take the limits independently and look what happens to the point process at discrete finite times (otherwise said, we focus our attention around time 0). 

%  While in principle we can take $M$ and $N$ to infinity even independently (or keep one fixed and send the other to infinity), we simplify only slightly by considering $M=N \to \infty$. 

\begin{thm} \label{thm:he_kernel}
    Consider the kernel $K_c(s, x; t, y)$ for fixed $s, t$. We have the following limit:
    \begin{equation}
        \lim_{M, N \to \infty} \frac{1}{M^{\frac{1}{\eta}} N^{\frac{1}{\theta}}} K_c \left(s, \frac{x}{M^{\frac{1}{\eta}} N^{\frac{1}{\theta}}}; t, \frac{y}{M^{\frac{1}{\eta}} N^{\frac{1}{\theta}}} \right) = K_{he} (s, x; t, y)
    \end{equation}
    where the kernel $K_{he}$ (``he'' for hard-edge) is given by
    \begin{equation}
    K_{he} (s, x; t, y) = \frac{1}{\sqrt{xy}} \int\limits_{\delta + i \R} \frac{d \zeta}{2 \pi i} \int\limits_{-\delta + i \R} \frac{d \omega}{2 \pi i} \frac{F_{he}(s, \zeta)}{F_{he}(t, \omega)} \frac{x^\zeta}{y^\omega} \frac{1} {\zeta-\omega} - \frac{\Id_{[s > t]}}{\sqrt{xy}} \int\limits_{\delta + i \R} \frac{d \zeta}{2 \pi i} \frac{F_{he}(s, \zeta)}{F_{he}(t, \zeta)} (x y^{-1})^{\zeta}
    \end{equation}
    with 
    \begin{equation}
        F_{he} (s, \zeta) = 
        \begin{dcases}
            \eta^{|s|} \frac{\Gamma(\frac{\alpha} {2 \eta} - \frac{\zeta}{\eta} + |s| + \frac{1}{2})} {\Gamma(\frac{\alpha}{2 \theta} + \frac{\zeta}{\theta} + \frac{1}{2})} & \text{if } s \leq 0, \\
            \theta^{-s} \frac{\Gamma(\frac{\alpha} {2 \eta} - \frac{\zeta}{\eta} + \frac{1}{2})} {\Gamma(\frac{\alpha}{2 \theta} + \frac{\zeta}{\theta} + s + \frac{1}{2})} & \text{if } s > 0
        \end{dcases}
        = \frac{\eta^{s^-}} { \theta^{s^+} } \frac{\Gamma(\frac{\alpha} {2 \eta} - \frac{\zeta}{\eta} + s^- + \frac{1}{2})} {\Gamma(\frac{\alpha}{2 \theta} + \frac{\zeta}{\theta} + s^+ + \frac{1}{2})}
    \end{equation}
    where $\delta$ is small enough so that all (now possibly infinitely many) poles of the integrands of the form $-\frac{\alpha}{2}-\frac{\theta}{2}-\cdots$ lie to the left of both contours and all poles of the form $\frac{\alpha}{2}+\frac{\eta}{2}+\cdots$ lie to the right.
\end{thm}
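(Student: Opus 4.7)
The plan is to start from the explicit representation of $K_c$ in Theorem~\ref{thm:cont_corr} and trace how each factor transforms under the scaling $x = X/c_{MN}$, $y = Y/c_{MN}$ with $c_{MN} := M^{1/\eta} N^{1/\theta}$. The overall $1/c_{MN}$ prefactor on the left-hand side of the claimed limit combines with $1/\sqrt{(X/c_{MN})(Y/c_{MN})} = c_{MN}/\sqrt{XY}$ coming from the kernel, so the $c_{MN}$ prefactors cancel outright; the task then reduces to computing the pointwise limit of the integrand in the two contour integrals and justifying the interchange of limit and contour integral.

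The key algebraic step is a factorization. Rewriting the Pochhammer symbols in $F_c(s,\zeta)$ via $(a)_n = \Gamma(a+n)/\Gamma(a)$ and regrouping, one obtains
\begin{equation*}
F_c(s,\zeta) \;=\; F_{he}(s,\zeta)\cdot G(M,N,\zeta), \qquad G(M,N,\zeta) \;:=\; \frac{\theta^{N}\,\Gamma\bigl(\tfrac{\alpha}{2\theta}+\tfrac{\zeta}{\theta}+N+\tfrac12\bigr)}{\eta^{M}\,\Gamma\bigl(\tfrac{\alpha}{2\eta}-\tfrac{\zeta}{\eta}+M+\tfrac12\bigr)},
\end{equation*}
the crucial point being that $G$ depends on $\zeta, M, N$ but \emph{not} on $s$. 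Consequently in the single-integral piece of $K_c$ the ratio $F_c(s,\zeta)/F_c(t,\zeta)$ equals $F_{he}(s,\zeta)/F_{he}(t,\zeta)$ exactly (the $G$'s cancel), so no limit is needed there. In the double-integral piece one instead gets $\bigl(F_{he}(s,\zeta)/F_{he}(t,\omega)\bigr)\cdot G(M,N,\zeta)/G(M,N,\omega)$, while the rescaling of $x^{\zeta}y^{-\omega}$ produces an extra geometric factor $M^{(\omega-\zeta)/\eta}N^{(\omega-\zeta)/\theta}$. In the ratio $G(M,N,\zeta)/G(M,N,\omega)$ the $\theta^{N}$ and $\eta^{M}$ cancel and only two Gamma ratios remain; applying the standard asymptotic $\Gamma(z+a)/\Gamma(z+b) = z^{a-b}\bigl(1+O(1/z)\bigr)$ once in $N$ and once in $M$ (the two limits decouple and may be taken independently) yields
\begin{equation*}
\frac{G(M,N,\zeta)}{G(M,N,\omega)} \;=\; M^{(\zeta-\omega)/\eta}\,N^{(\zeta-\omega)/\theta}\,\bigl(1+o(1)\bigr),
\end{equation*}
which exactly annihilates the scaling factor. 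The pointwise limit of the integrand therefore is precisely the integrand of $K_{he}(s,X;t,Y)$.

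The main technical obstacle is to justify the interchange of limit and integral, since the contours $\pm\delta+i\mathbb{R}$ are unbounded. For this I would exhibit a single $M,N$-independent dominating function. The integrand decays in $|\Im\zeta|,|\Im\omega|$ because of the Gamma factors in $F_{he}$: the classical vertical-line estimate $|\Gamma(\sigma+i\tau)| \sim \sqrt{2\pi}\,|\tau|^{\sigma-1/2} e^{-\pi|\tau|/2}$ applied to numerator and denominator of $F_{he}$ gives integrable exponential decay, which easily dominates the bounded $X^{\zeta}, Y^{-\omega}$ and the $1/(\zeta-\omega)$ kernel (vertical separation of the two contours keeps the latter bounded on them). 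The only remaining delicate point is an $M,N$-\emph{uniform} bound on $G(M,N,\zeta)/G(M,N,\omega)$; this is supplied by the Tricomi--Erd\'elyi uniform Stirling expansion of $\log\Gamma(z+a)$ on vertical strips, which upgrades the pointwise asymptotic above to $\bigl|G(M,N,\zeta)/G(M,N,\omega)\bigr| \le C\,M^{\mathrm{Re}(\zeta-\omega)/\eta} N^{\mathrm{Re}(\zeta-\omega)/\theta}$ uniformly for $\zeta,\omega$ on the contours bounded away from the poles of $G$. Combining this with the $F_{he}$ decay produces the required integrable envelope, and dominated convergence finishes the proof. The single-integral piece is handled by the same decay estimate applied to $F_{he}(s,\zeta)/F_{he}(t,\zeta)$ alone, with no $G$ to control.
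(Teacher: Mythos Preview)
Your algebraic setup---the factorization $F_c(s,\zeta)=F_{he}(s,\zeta)\cdot G(M,N,\zeta)$ with $G$ independent of $s$, the observation that the single integral is therefore unchanged, and the Stirling estimate $G(M,N,\zeta)/G(M,N,\omega)\sim M^{(\zeta-\omega)/\eta}N^{(\zeta-\omega)/\theta}$ cancelling the rescaling---is exactly the route the paper takes, expressed more cleanly. The pointwise limit of the integrand is correct.

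The gap is in your dominated convergence argument on the vertical contours. The vertical-line estimate $|\Gamma(\sigma+i\tau)|\sim\sqrt{2\pi}\,|\tau|^{\sigma-1/2}e^{-\pi|\tau|/2}$ does \emph{not} yield integrable decay for the ratio $F_{he}(s,\zeta)/F_{he}(t,\omega)$. Writing $\zeta=\delta+i\tau$, the numerator $\Gamma(\cdot-\zeta/\eta)$ contributes $e^{-\pi|\tau|/(2\eta)}$ while the denominator $\Gamma(\cdot+\zeta/\theta)$ contributes $e^{-\pi|\tau|/(2\theta)}$ in the \emph{denominator}, so $|F_{he}(s,\zeta)|\sim|\tau|^{\cdots}\exp\bigl(\tfrac{\pi|\tau|}{2}(\tfrac1\theta-\tfrac1\eta)\bigr)$. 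Whenever $\eta\neq\theta$ this is exponentially \emph{growing} in one of the two variables; even for $\eta=\theta$ the residual polynomial decay has exponent of order $2\delta/\eta<1$ (you need $\delta$ small to separate the pole families), which is not integrable. The factor $x^\zeta y^{-\omega}$ has constant modulus on vertical lines and cannot rescue this, and $1/(\zeta-\omega)$ is merely bounded there.

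The paper deals with this by abandoning the vertical contours for the estimate and passing to the Hankel contours described in Remarks~\ref{rem:cont_K_c} and~\ref{rem:cont_K_he}. On those contours $\zeta\to+\infty$, $\omega\to-\infty$ along (near-)real directions, so $(x/c_{MN})^\zeta$ and $(y/c_{MN})^{-\omega}$ decay exponentially (the rescaled arguments lie in $(0,1)$), and this decay dominates any power blow-up from the $\Gamma$ ratios. If you want to repair your argument without changing contours, you would need a different source of decay than the one you invoke; as written, the dominating function does not exist.
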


\begin{rem} Observe that 
\begin{equation}
    \Id_{[s > t]} \frac{F_{he}(s, \zeta)} {F_{he}(t, \zeta)} = \Id_{[s > t]} \frac{F_c(s, \zeta)}{F_c(t, \zeta)}
\end{equation}
and the right-hand side is given explicitly in equation~\eqref{eq:V_int_c}.
\end{rem}

\begin{rem} \label{rem:cont_K_he}
    A similar statement to that of Remark~\ref{rem:cont_K_c} applies to these contours as well. For the double contour integral in $K_{he}$, they are depicted in Figure~\ref{fig:K_he_cont}. We use the vertical ones though sometimes it is convenient (for numerical evaluation perhaps) to use the Hankel contours also depicted in fig.~cit. It is important that they do not intersect, they ``enclose'' all the poles, and that $\zeta$ is to the right of $\omega$.
\end{rem}

\begin{figure}[!h]
    \begin{center}
        \includegraphics[scale=0.5]{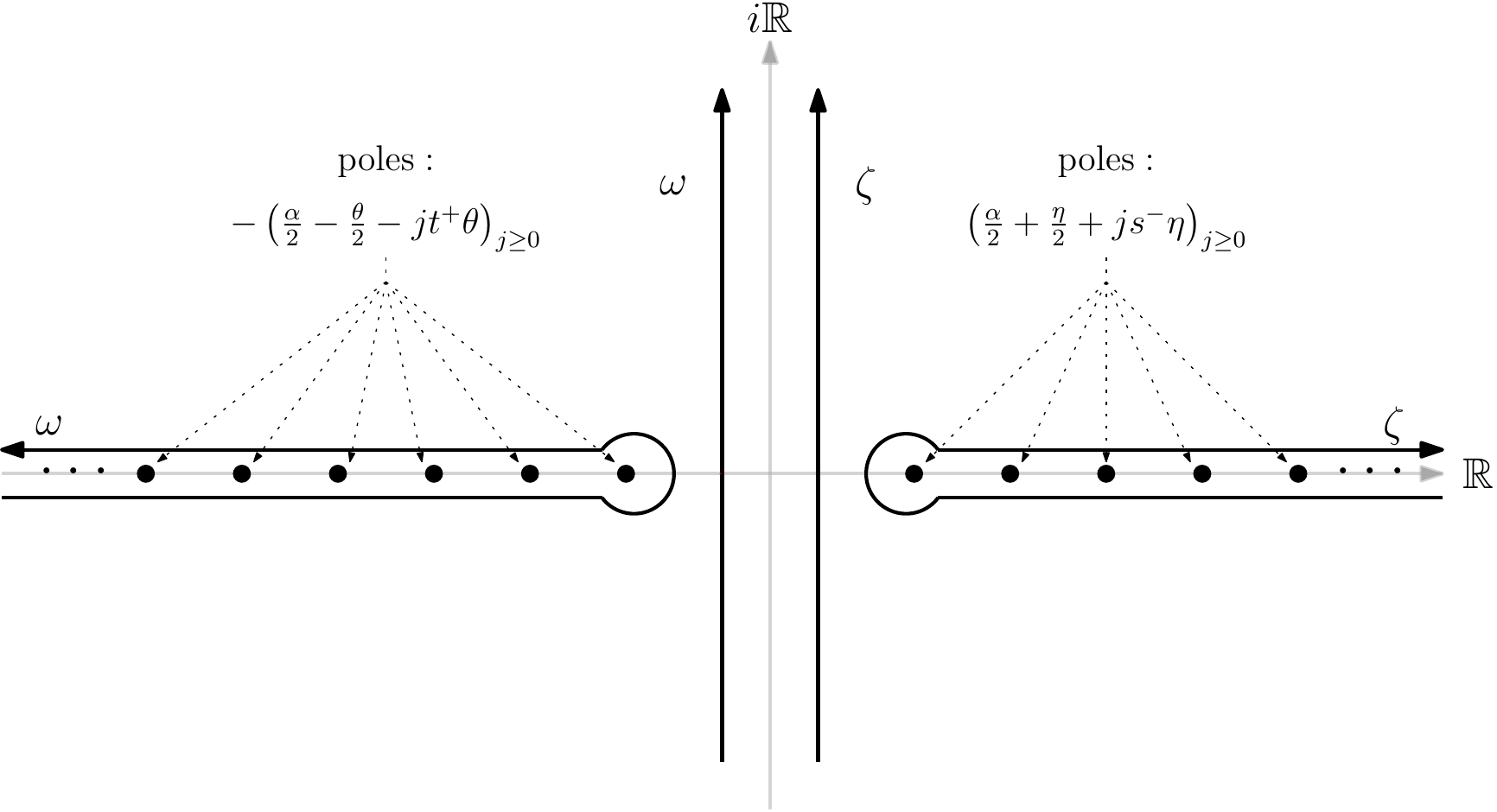}
    \end{center}
    \caption{Two choices of contours for the double contour integral part of $K_{he}$ in Theorem~\ref{thm:cont_corr}. The ones in the statement are the vertical ones, and these often appear in the definition of the Fox $H$-function below; the Hankel contours also sometimes appear in the literature so we included them. There are infinitely many poles in both $\zeta$ and $\omega$.}
    \label{fig:K_he_cont}
\end{figure}

\begin{rem}
    When $s = t = 0$ and  $\eta = 1$, up to conjugation, $K_{he}(0, x; 0, y)$ agrees with Borodin's hard-edge kernel $K_B$ from~\eqref{eq:bor_ker}. The proof of this is deferred to Appendix~\ref{sec:alt_he}, notably Proposition~\ref{prop:K_he_alt_2} and Remark~\ref{rem:bor_lim_he}.
\end{rem}

\begin{prop}
    For $\eta = \theta = 1$, the kernel $K_{he}(0, x; 0, y)$ becomes the hard-edge Bessel kernel of random matrix theory~\cite{tw94_bessel, for93}. We have
    \begin{equation} \label{eq:bessel_kernel}
        K_{he} (0, x; 0, y) = K_{\alpha, \rm Bessel} (x, y) = \int_{0}^1 J_{\alpha} (2 \sqrt{ux}) J_{\alpha} (2 \sqrt{uy}) du
    \end{equation}
    where $J_\alpha$ is the Bessel function of the first kind. 
\end{prop}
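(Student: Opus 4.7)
The plan is to specialize Theorem~\ref{thm:he_kernel} to $\eta = \theta = 1$, $s = t = 0$, and then split the double contour integral into two single Mellin--Barnes integrals, each of which is a Bessel function in disguise. First, since $s = t$ the single-contour term vanishes (the indicator $\Id_{[s>t]}$ is zero), and $s^+ = s^- = 0$ together with $\eta = \theta = 1$ give $F_{he}(0, \zeta) = \Gamma(\tfrac{\alpha+1}{2} - \zeta)/\Gamma(\tfrac{\alpha+1}{2} + \zeta)$, so that
\begin{equation*}
    K_{he}(0, x; 0, y) = \frac{1}{\sqrt{xy}} \int_{\delta + i \R} \frac{d\zeta}{2\pi i} \int_{-\delta + i\R} \frac{d\omega}{2\pi i}\, \frac{\Gamma(\tfrac{\alpha+1}{2} - \zeta)}{\Gamma(\tfrac{\alpha+1}{2} + \zeta)} \cdot \frac{\Gamma(\tfrac{\alpha+1}{2} + \omega)}{\Gamma(\tfrac{\alpha+1}{2} - \omega)} \cdot \frac{x^\zeta\, y^{-\omega}}{\zeta - \omega}.
\end{equation*}

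Second, I would decouple $\zeta$ and $\omega$ by writing $\frac{1}{\zeta - \omega} = \int_0^1 u^{\zeta - \omega - 1}\, du$, which is valid since $\operatorname{Re}(\zeta - \omega) = 2 \delta > 0$ on the chosen contours. After exchanging the order of integration (which I would justify by first deforming the vertical lines to Hankel contours wrapping the poles of the relevant Gammas, where the ratios decay exponentially and Fubini is elementary, then deforming back), the integrand factors as
\begin{equation*}
    K_{he}(0, x; 0, y) = \frac{1}{\sqrt{xy}} \int_0^1 \frac{du}{u}\, I_x(u)\, I_y(u),
\end{equation*}
with $I_x(u) = \int_{\delta + i \R} \frac{\Gamma(\tfrac{\alpha+1}{2} - \zeta)}{\Gamma(\tfrac{\alpha+1}{2} + \zeta)} (ux)^\zeta \tfrac{d\zeta}{2\pi i}$ and analogously $I_y(u) = \int_{-\delta + i\R} \frac{\Gamma(\tfrac{\alpha+1}{2} + \omega)}{\Gamma(\tfrac{\alpha+1}{2} - \omega)} (uy)^{-\omega} \tfrac{d\omega}{2\pi i}$.

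Third, I would recognize each single-variable integral as a disguised Bessel function via the classical Mellin--Barnes representation $J_\alpha(2\sqrt{z}) = \tfrac{1}{2\pi i} \int_{c + i\R} \Gamma(s + \tfrac{\alpha}{2})/\Gamma(1 + \tfrac{\alpha}{2} - s)\, z^{-s}\, ds$, valid in the strip $-\tfrac{\alpha}{2} < c < \tfrac{3}{4}$. The substitutions $s = \tfrac{1}{2} - \zeta$ in $I_x$ and $s = \tfrac{1}{2} + \omega$ in $I_y$ map both Gamma ratios exactly onto $\Gamma(s + \tfrac{\alpha}{2})/\Gamma(1 + \tfrac{\alpha}{2} - s)$ and the power factors to $\sqrt{ux}\,(ux)^{-s}$ and $\sqrt{uy}\,(uy)^{-s}$ respectively; the resulting contours $\operatorname{Re}(s) = \tfrac{1}{2} - \delta$ lie in the valid strip for $\alpha \geq 0$ and $\delta \in (0, \tfrac{1}{2})$. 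Hence $I_x(u) = \sqrt{ux}\, J_\alpha(2\sqrt{ux})$ and $I_y(u) = \sqrt{uy}\, J_\alpha(2\sqrt{uy})$, and plugging these back cancels $\sqrt{xy}$ and the factor $u^{-1}$, yielding precisely $\int_0^1 J_\alpha(2\sqrt{ux})\, J_\alpha(2\sqrt{uy})\, du$ as in~\eqref{eq:bessel_kernel}. The main technical point will be the Fubini justification, since the Mellin--Barnes integrals for $J_\alpha$ converge only conditionally on vertical lines; this is handled by the intermediate deformation to Hankel contours described above, after which Stirling's formula gives genuine exponential decay.
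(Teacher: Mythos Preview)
Your proposal is correct and follows essentially the same route as the paper: decouple $\zeta$ and $\omega$ via $\tfrac{1}{\zeta-\omega}=\int_0^1 u^{\zeta-\omega-1}\,du$, then recognize each single integral as a Mellin--Barnes representation of $J_\alpha$. The only differences are cosmetic (the paper cites the representation in the form of \cite[eq.~10.9.22]{nist} with the substitution $Z=\zeta-\tfrac{\alpha}{2}-\tfrac{1}{2}$ rather than your $s=\tfrac12-\zeta$, and does not spell out the Fubini justification you sketch).
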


\begin{proof}
This can be readily seen as follows: starting from the left, we first use the very simple formula $\frac{x^{\zeta-1/2}y^{-\omega-1/2} } {\zeta - \omega} = \int_0^1 (ux)^{\zeta-1/2} (uy)^{-\omega-1/2} du$ (valid as $\Re(\zeta) > \Re(\omega)$ due to our choice of contours); then each of the $\zeta$ and $\omega$ integrals are the required Bessel functions upon using~\cite[eq.~10.9.22]{nist}
    \begin{equation}
        J_{\alpha} (X) = \frac{1}{2\pi i} \int_{-i\infty}^{i\infty} \frac{\Gamma (-Z)(\tfrac{1}{2}X)^{\alpha+2 Z}} {\Gamma(\alpha+Z+1)}dZ
    \end{equation}
    where the contour passes to the right of $0, 1, 2 \dots$ and where, in the notation above, we use $(Z, X) = \left( \zeta - \frac{\alpha}{2} - \frac{1}{2}, 2 \sqrt{ux} \right)$ for the $(\zeta, x)$ integral and $(Z, X) = \left( -\omega - \frac{\alpha}{2} - \frac{1}{2}, 2 \sqrt{uy} \right)$ for the $(\omega, y)$ integral (in which case the contour also needs to pass to the left of $\N$ around $0$).
\end{proof}

We can furthermore express the whole extended hard-edge limit kernel $K_{he}$ in terms of the Fox $H$-function~\cite[eq.~(51)]{fox61} which we now define. Pick integers $p, q \geq 0$ (not both zero) and integers $0 \leq m \leq q$ and $0 \leq n \leq p$. Pick also complex numbers $a_i, e_i, b_j, c_j$ with $1 \leq i \leq p$, $1 \leq j \leq q$ such that the $c$'s and the $e$'s are positive real numbers. The $H$-function depending on all the above parameters is defined as the following Mellin transform: 
\begin{equation} \label{eq:fox_h}
    H^{m, n}_{q, p} \left[ x \left| \begin{array}{c} (a_1, e_1), \dots , (a_p, e_p) \\ (b_1, c_1), \dots , (b_q, c_q) \end{array} \right] \right.  = \int_T \frac{dz}{2 \pi i} \frac{ \prod_{i=1}^m \Gamma(b_i  + c_i z) \prod_{i=1}^n \Gamma(a_i - e_i z) }   { \prod_{i=m+1}^q \Gamma(b_i  + c_i z) \prod_{i=n+1}^p \Gamma(a_i - e_i z) } x^{-z}
\end{equation}
where we assume that the poles of the numerator of the integrand are all simple and that the contour is a vertical line parallel to the imaginary axis which has the poles of $\Gamma(a_i-e_i z)$ to the right (for all $1 \leq i \leq m$ in the numerator) and those of $\Gamma (b_j  + c_j z)$ to the left (for all $1 \leq j \leq m$ in the numerator). In particular, for $p=q=1$, $(m, n) \in \{(0, 1), (1, 0)\}$ we have:
\begin{equation}
    \begin{split}
    H^{0, 1}_{1, 1} \left[ x \left| \begin{array}{c} (a_1, e_1) \\ (b_1, c_1) \end{array} \right] \right.  & = \int_T \frac{dz}{2 \pi i} \frac{ \Gamma(a_1 - e_1 z) }   { \Gamma(b_1  + c_1 z) } x^{-z}, \\
    H^{1, 0}_{1, 1} \left[ y \left| \begin{array}{c} (a_1, e_1) \\ (b_1, c_1) \end{array} \right] \right.  & = \int_T \frac{d w}{2 \pi i} \frac{ \Gamma(b_1  + c_1 w) }{ \Gamma(a_1 - e_1 w) }  y^{-w}
    \end{split}
\end{equation}
and if $a_1, b_1 > 0$ we can just take $T = i \R$ (or a contour close enough to $i \R$ like the ones from Theorem~\ref{thm:he_kernel}).

Then and as before, by using  the simple formula $\frac{x^{\zeta-1/2}y^{-\omega-1/2} } {\zeta - \omega} = \int_0^1 (ux)^{\zeta-1/2} (uy)^{-\omega-1/2} du$ into the definition of $K_{he}$ and matching the remaining integrals with the appropriate Fox $H$-functions, we arrive at the following rewriting which motivates naming $K_{he}$ (an instance of) the \emph{Fox $H$-kernel}.

\begin{prop} \label{prop:fox_h}
    The hard-edge kernel $K_{he}$ has the following form:
\begin{equation}
    K_{he} (s, x; t, y) = \frac{1}{\sqrt{xy}} \int_0^1 f_{he}^{(s)}\left(\frac{1}{ux}\right) g_{he}^{(t)}(uy) \frac{du}{u} - \Id_{[s > t]} \frac{h(s, x; t, y)}{\sqrt{xy}} 
\end{equation}
where
\begin{equation}
    \begin{split}
    (f_{he}^{(s)}(x), g_{he}^{(s)}(x)) &= \begin{dcases}
        \left( \eta^{|s|} H^{0, 1}_{1, 1} \left[ x \left| \begin{array}{c} \left(\frac{\alpha}{2 \eta} + |s| + \frac{1}{2} , \frac{1}{\eta}\right) \\ \left(\frac{\alpha}{2 \theta} + \frac{1}{2} , \frac{1}{\theta} \right)  \end{array} \right] \right. , \eta^{-|s|} H^{1, 0}_{1, 1} \left[ x \left| \begin{array}{c} \left(\frac{\alpha}{2 \eta} + |s| + \frac{1}{2} , \frac{1}{\eta}\right) \\ \left(\frac{\alpha}{2 \theta} + \frac{1}{2} , \frac{1}{\theta} \right)  \end{array} \right] \right. \right) & \text{if } s \leq 0, \\
        \left( \theta^{-s} H^{0, 1}_{1, 1} \left[ x \left| \begin{array}{c} \left(\frac{\alpha}{2 \eta} + \frac{1}{2} , \frac{1}{\eta}\right) \\ \left(\frac{\alpha}{2 \theta} + s + \frac{1}{2} , \frac{1}{\theta} \right)  \end{array} \right] \right. , \theta^{s}  H^{1, 0}_{1, 1} \left[ x \left| \begin{array}{c} \left(\frac{\alpha}{2 \eta} + \frac{1}{2} , \frac{1}{\eta}\right) \\ \left(\frac{\alpha}{2 \theta} + s + \frac{1}{2} , \frac{1}{\theta} \right)  \end{array} \right] \right. \right) & \text{if } s > 0
    \end{dcases}
    \end{split}
\end{equation}
and where
\begin{equation}
    h(s, x; t, y) =
    \begin{dcases}
        \theta^{t-s} H^{1, 0}_{2, 0} \left[ \frac{y}{x} \left| \begin{array}{c} - \\ \left(\frac{\alpha}{2 \theta} + t + \frac{1}{2} , \frac{1}{\theta}\right), \left(\frac{\alpha}{2 \theta} + s + \frac{1}{2} , \frac{1}{\theta}\right)  \end{array} \right] \right. & \text{if } s > t \geq 0, \\
        \eta^{|s|-|t|} H^{0, 1}_{0, 2} \left[ \frac{y}{x} \left| \begin{array}{c} \left(\frac{\alpha}{2 \eta} + |s| + \frac{1}{2} , \frac{1}{\eta}\right), \left(\frac{\alpha}{2 \eta} + |t| + \frac{1}{2} , \frac{1}{\eta}\right) \\ -  \end{array} \right] \right. & \text{if } 0 \geq s > t, \\
        \eta^{-|t|} \theta^{-s}  H^{1, 1}_{2, 2} \left[ \frac{y}{x} \left| \begin{array}{c} \left(\frac{\alpha}{2 \eta} + \frac{1}{2} , \frac{1}{\eta}\right), \left(\frac{\alpha}{2 \eta} + |t| + \frac{1}{2} , \frac{1}{\eta}\right) \\ \left(\frac{\alpha}{2 \theta} + \frac{1}{2} , \frac{1}{\theta}\right), \left(\frac{\alpha}{2 \theta} + s + \frac{1}{2} , \frac{1}{\theta}\right)  \end{array} \right] \right. & \text{if } s \geq 0 > t.   
    \end{dcases}
\end{equation}
\end{prop}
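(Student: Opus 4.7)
The proposition is a direct Mellin--Barnes matching between the defining integrals of $K_{he}$ and those of the Fox $H$-function~\eqref{eq:fox_h}. My plan has three steps.

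First, I would factorize the double-contour part of $K_{he}$. Because the vertical contours satisfy $\Re(\zeta - \omega) = 2\delta > 0$, the elementary identity
\begin{equation*}
    \frac{x^{\zeta}\, y^{-\omega}}{\zeta - \omega} \;=\; \int_0^1 (ux)^{\zeta}(uy)^{-\omega}\, \frac{du}{u}
\end{equation*}
holds by explicit evaluation of the $u$-integral. Substituting this into the double-contour integral and interchanging the order of integration (justified by the Stirling estimate $|\Gamma(\sigma + i\tau)| \sim \sqrt{2\pi}|\tau|^{\sigma-1/2}e^{-\pi|\tau|/2}$ applied to each Gamma factor in the $F_{he}$ ratios, which together with $u^{2\delta-1}$ being integrable near $0$ gives absolute convergence) produces $\int_0^1 \frac{du}{u}\, A(ux)\, B(uy)$, where $A$ and $B$ are single Mellin--Barnes integrals in $\zeta$ and $\omega$ respectively.

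Second, I would identify $A(w)$ and $B(v)$ with Fox $H$-functions by direct comparison with~\eqref{eq:fox_h}. For $s \leq 0$, inserting the explicit Gamma form of $F_{he}(s,\zeta)$ and rewriting $w^{\zeta} = (1/w)^{-\zeta}$ presents $A(w)$ as $\eta^{|s|}$ times $H^{0,1}_{1,1}$ evaluated at $1/w$ with top-row parameters $(\tfrac{\alpha}{2\eta} + |s| + \tfrac{1}{2}, \tfrac{1}{\eta})$ and bottom-row parameters $(\tfrac{\alpha}{2\theta} + \tfrac{1}{2}, \tfrac{1}{\theta})$; setting $w = ux$ reproduces the claimed $f_{he}^{(s)}(1/(ux))$. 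The vertical contour $\delta + i\R$ leaves the sole numerator-Gamma pole chain $\zeta = \tfrac{\alpha}{2} + \eta(|s| + \tfrac{1}{2} + n)$ to its right, which is exactly the Fox $H$ admissibility condition for $T$. The case $s > 0$ is analogous, with the shift $s^+$ migrating from the numerator-Gamma argument to the denominator-Gamma argument and the prefactor becoming $\theta^{-s}$. An identical treatment of $B$ yields $g_{he}^{(t)}(uy)$ as an $H^{1,0}_{1,1}$ in both sign regimes of $t$.

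Third, for the single-contour correction (the $\Id_{[s>t]}$ summand) I would start from the piecewise-simplified ratio in~\eqref{eq:V_int_c} (which equally describes $F_{he}(s,\zeta)/F_{he}(t,\zeta)$ by the remark following the theorem) and convert each Pochhammer $(a)_n$ back into a Gamma ratio via $(a)_n = \Gamma(a+n)/\Gamma(a)$. This presents the integrand as a ratio of Gammas times $(y/x)^{-\zeta}$, matching the Fox $H$ definition term by term: the single Pochhammer factor gives $H^{1,0}_{2,0}$ at $y/x$ for $s > t \geq 0$, symmetrically $H^{0,1}_{0,2}$ for $0 \geq s > t$, while the product of two Pochhammers in the mixed case $s \geq 0 > t$ assembles into $H^{1,1}_{2,2}$. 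The prefactors $\theta^{t-s}$, $\eta^{|s|-|t|}$, and $\eta^{-|t|}\theta^{-s}$ are inherited directly from~\eqref{eq:V_int_c}.

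The main obstacle, such as it is, is purely notational: one must verify across the sign combinations for $(s,t)$ that the shifts $s^\pm$ and $t^\pm$ land in the correct Gamma arguments and that the vertical contour stays within the admissibility strip of each Fox $H$ representation. No analytic input beyond the Stirling decay needed to justify Fubini is required.
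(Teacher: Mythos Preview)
Your proposal is correct and follows exactly the paper's approach: the paper's proof (given in the paragraph immediately preceding the proposition) consists precisely of inserting the identity $\frac{x^{\zeta-1/2}y^{-\omega-1/2}}{\zeta-\omega} = \int_0^1 (ux)^{\zeta-1/2}(uy)^{-\omega-1/2}\,du$ into $K_{he}$ and then matching the resulting single Mellin--Barnes integrals with the Fox $H$-function definition~\eqref{eq:fox_h}. You supply more detail than the paper---the Fubini justification via Stirling decay and the explicit Pochhammer-to-Gamma conversion for the $\Id_{[s>t]}$ term---but the argument is the same.
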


Finally, we have the usual hard-edge limit/gap probability statement.
\begin{thm} \label{thm:gap_prob_cont}
    Let $M=N$ (for simplicity of stating the result), pick an integer $-N+1 \leq t \leq N-1$ and consider the ensemble $\x^{(t)}$. We have:
    \begin{equation}
        \lim_{N \to \infty} \P \left( \frac{  x_{1}^{(t)} } {N^{\frac{1}{\eta}+\frac{1}{\theta}}} < r \right) = \det (1 - K_{he}(t, \cdot; t, \cdot))_{L^2(0,r)}.
    \end{equation} 
\end{thm}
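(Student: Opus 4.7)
By Theorem~\ref{thm:cont_corr}, specialized to $s=t$, the slice $\x^{(t)}$ is a determinantal point process on $(0,1)$ with correlation kernel $K_c(t,\cdot;t,\cdot)$, so the gap probability at the left edge is a Fredholm determinant:
\[
\P\bigl(x_1^{(t)} > s\bigr) = \det\bigl(1 - K_c(t,\cdot;t,\cdot)\bigr)_{L^2(0,s)}.
\]
Specializing $s = r/N^{1/\eta+1/\theta}$ and applying the change of variables $u \mapsto u/N^{1/\eta+1/\theta}$ in each integration variable of the Fredholm series rewrites this as
\[
\P\!\left(\frac{x_1^{(t)}}{N^{1/\eta+1/\theta}} > r\right) = \det\bigl(1 - K_c^{(N)}\bigr)_{L^2(0,r)},
\]
where
\[
K_c^{(N)}(x,y) := \frac{1}{N^{1/\eta+1/\theta}}\, K_c\!\left(t,\frac{x}{N^{1/\eta+1/\theta}};\,t,\frac{y}{N^{1/\eta+1/\theta}}\right).
\]

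Theorem~\ref{thm:he_kernel}, applied with $M=N \to \infty$, gives pointwise convergence $K_c^{(N)}(x,y) \to K_{he}(t,x;t,y)$ for every fixed $x,y \in (0,r)$; the single-integral ``$\Id_{[s>t]}$'' term vanishes in both $K_c$ and $K_{he}$ when $s=t$, so only the double contour integral has to be tracked. To upgrade this pointwise convergence to convergence of the Fredholm determinant, I would invoke the standard criterion: exhibit a uniform bound $|K_c^{(N)}(x,y)| \le C(r)$ valid for all $x,y\in(0,r)$ and all sufficiently large $N$. Hadamard's inequality then yields $|\det_{1\le i,j\le n} K_c^{(N)}(x_i,x_j)| \le n^{n/2}\,C(r)^n$, which supplies the dominated-convergence envelope needed to take the limit term-by-term in the Fredholm expansion.

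The main technical obstacle is therefore the uniform estimate on $K_c^{(N)}$. The plan is to revisit the proof of Theorem~\ref{thm:he_kernel}: choose the vertical contours of Remark~\ref{rem:cont_K_c} with real parts $\pm\delta$ for some $\delta\in(0,1/2)$, which neutralizes the $1/\sqrt{xy}$ prefactor against $x^\zeta y^{-\omega}$; then apply Stirling's formula to the Pochhammer ratios in $F_c(t,\cdot)$ so as to compare them to the $\Gamma$-function ratios in $F_{he}(t,\cdot)$, together with an $N$-power prefactor that exactly cancels the $N^{-(1/\eta+1/\theta)}$ rescaling. Along the vertical contours the integrands decay polynomially in $|\Im\zeta|$ and $|\Im\omega|$ thanks to $\Gamma$-function asymptotics, with remainder terms bounded uniformly in $N$ and in $x,y\in(0,r)$. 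Once this uniform bound is secured, dominated convergence yields the claimed convergence of Fredholm determinants to $\det\bigl(1-K_{he}(t,\cdot;t,\cdot)\bigr)_{L^2(0,r)}$.
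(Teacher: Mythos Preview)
The paper does not give an explicit proof of this theorem; it treats the statement as a direct consequence of the kernel convergence in Theorem~\ref{thm:he_kernel}, and the only place a Fredholm-determinant convergence argument is actually spelled out is in the proof of Theorem~\ref{thm:disc_lpp} (for the discrete LPP result, on $L^2(s,\infty)$ rather than $L^2(0,r)$). Your approach---write the gap probability as a Fredholm determinant, rescale, use pointwise kernel convergence together with a Hadamard/dominated-convergence bound---is exactly the standard argument the paper relies on implicitly.

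One technical point deserves care. Your claim that taking $\delta\in(0,\tfrac12)$ ``neutralizes the $1/\sqrt{xy}$ prefactor against $x^\zeta y^{-\omega}$'' is not quite right: with $\Re\zeta=\delta$ and $\Re\omega=-\delta$ the combined factor has modulus $(xy)^{\delta-1/2}$, which blows up as $x,y\to 0^+$ whenever $\delta<\tfrac12$. To obtain a bound suitable for the Hadamard step you should push the two vertical contours as far apart as the poles allow, namely $\Re\zeta$ just below $\tfrac{\alpha}{2}+\eta(t^-+\tfrac12)$ and $\Re\omega$ just above $-\tfrac{\alpha}{2}-\theta(t^++\tfrac12)$. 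When both of these thresholds exceed $\tfrac12$ the rescaled kernel is genuinely uniformly bounded on $(0,r)^2$; in the remaining parameter ranges you get instead $|K_c^{(N)}(x,y)|\le C\,x^{-a}y^{-b}$ with $a,b\in[0,\tfrac12)$, an integrable (indeed $L^2$) singularity for which the Hadamard--dominated-convergence argument still goes through after a routine modification. This is a refinement of your sketch, not a gap in the strategy.
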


\begin{rem}
    Theorem~\ref{thm:gap_prob_cont} has an obvious multi-time (and/or multi-interval) extension which we leave as an exercise to the reader.
\end{rem}

It would be interesting to derive Painlev\'e-type differential (and possibly difference in the discrete time variable) equations for the gap probabilities above in a manner similar to that of Tracy--Widom~\cite{tw94_bessel}.

\subsection{Last passage percolation} \label{sec:lpp}

In this section we present two results in the theory of directed last passage percolation (LPP), both resembling a result of Johansson~\cite{joh08}. We say that a random variable $X$ is geometric of parameter $0 \leq q < 1$ 
\begin{equation}
    X \sim {\rm Geom} (q) \quad \text{if}\quad \P(X=k) = (1-q) q^k, \ k=0,1,\dots
\end{equation}
We likewise say $Y$ is a power random variable of parameter $\beta > 0$ 
\begin{equation}
    Y \sim {\rm Pow} (\beta) \quad \text{if}\quad \P(Y \in (x, x+dx)) = \beta x^{\beta-1}, \ x \in [0,1].
\end{equation}
Let us notice $Y \sim {\rm Pow} (\beta)$ can be obtained from $X \sim {\rm Geom} (q)$ in the limit $\epsilon \to 0+$ with $q = e^{-\beta \epsilon}$ and $Y = -\epsilon^{-1} \log X$.

\subsubsection{A discrete result} \label{sec:disc_lpp}

We start with the discrete setting. Let us recall our continuous parameters from the previous section: $q, a, \eta, \theta$, and also recall $Q = q^\eta, \tilde{Q} = q^\theta$. Consider the integer quadrant lattice consisting of points $(i, j)_{i, j \geq 1}$ with coordinates as in Figure~\ref{fig:disc_lpp}. At each point $(i, j)$ on the anti-diagonal $i+j = k + 1$ place a non-negative integer which is a random variable ${\rm Geom}(a Q^{i-1/2} \tilde{Q}^{k-i+1/2})$, independent of the rest. Let
\begin{itemize}
    \item $L^{\rm geo}_1 = $ the longest down-left path from $(1,1)$ to $(\infty, \infty)$, where the \emph{length} of a path is the sum of the integers on it;
    \item $L^{\rm geo}_2 = $ the longest down-right path from $(\infty,1)$ to $(1, \infty)$.
\end{itemize}

\begin{figure}[!h]
    \centering
    \includegraphics[scale=0.5]{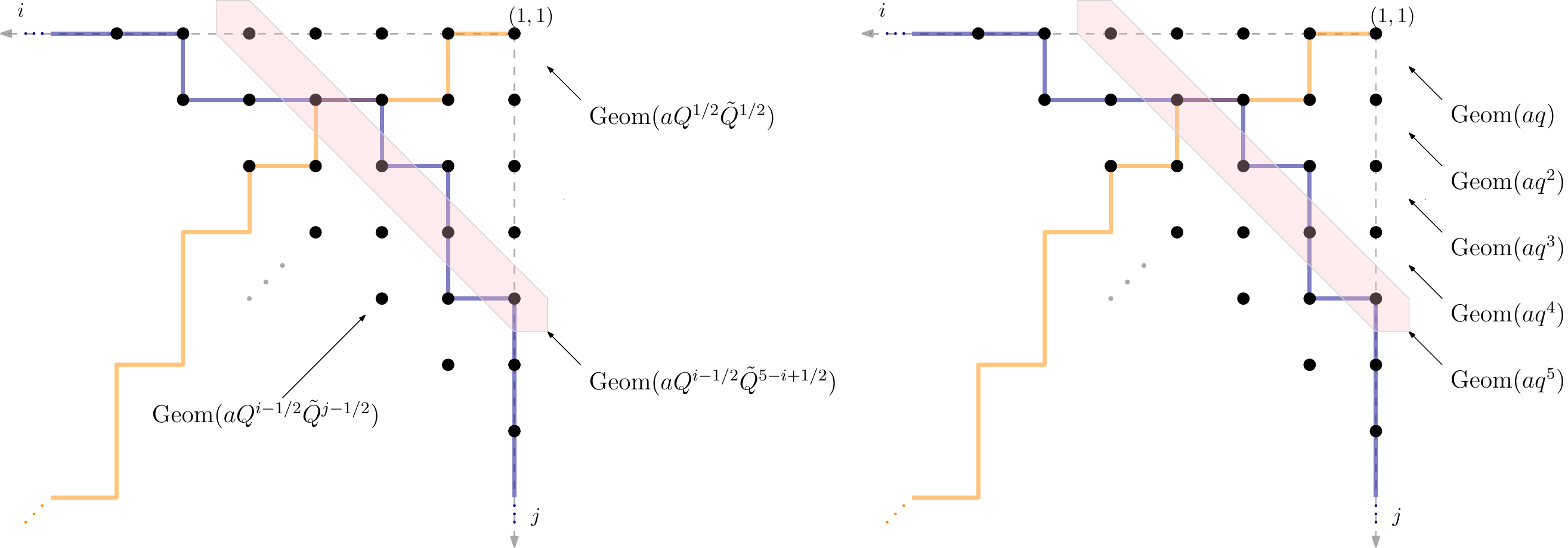} \quad
    \caption{Left: the setting for Theorem~\ref{thm:disc_lpp} and two possible paths that enter into the maxima for $L^{\rm geo}_1$ (orange) and $L^{\rm geo}_2$ (blue). Right: the equi-distributed-by-diagonal case ($Q=\tilde{Q}=q$) where each anti-diagonal $i+j = k+1$ has iid ${\rm Geom}(a q^k)$ random variables on it.}
    \label{fig:disc_lpp}
\end{figure}

By Borel--Cantelli, only finitely many of these geometric random variables are non-zero\footnote{This is equivalent to the finiteness of the partition function for the whole ensemble, and the latter equals 
\begin{equation}
    \prod_{i, j \geq 1} (1-a Q^{i-1/2} \tilde{Q}^{j-1/2})^{-1}.
\end{equation}
}, and thus both $L^{\rm geo}_i, 1 \leq i \leq 2$ are almost surely finite. The setting is depicted in Figure~\ref{fig:cont_lpp}, along with two representative paths/polymers: an orange one for $L^{\rm geo}_1$ and a blue one for $L^{\rm geo}_2$.

We have the following theorem.

\begin{thm} \label{thm:disc_lpp}
    Fix real parameters $\alpha, \eta, \theta \geq 0$ (not all 0). $L^{\rm geo}_1$ and $L^{\rm geo}_2$ are equal in distribution. Moreover, let $L \in \{ L^{\rm geo}_1, L^{\rm geo}_2 \}$. Let 
    \begin{equation}
        q = e^{-\epsilon}, \quad a = e^{-\alpha \epsilon}.
    \end{equation}
    We have:
    \begin{equation}
        \lim_{\epsilon \to 0+} \P \left( \epsilon L + \frac{\log (\epsilon \eta)}{\eta} + \frac{\log (\epsilon \theta)}{\theta} < s \right) = \det(1-\tilde{K}_{he})_{L^2(s, \infty)}
    \end{equation}
    where $\tilde{K}_{he}$ is related to the hard-edge $K_{he}$ of Theorem~\ref{thm:he_kernel} via:
    \begin{equation}
        \tilde{K}_{he} (x, y) = e^{-\frac{x}{2}-\frac{y}{2}} K_{he} (0, e^{-x}; 0, e^{-y}).
    \end{equation}
\end{thm}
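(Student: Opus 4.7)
The plan is to identify the LPP times $L^{\rm geo}_i$ with the largest part $\lambda_1^{(0)}$ of the central partition in the infinite-box version of the plane partition measure~\eqref{eq:pp_measure}, and then reduce to the hard-edge convergence of Theorem~\ref{thm:he_kernel} combined with the logarithmic change of variables $x \leftrightarrow e^{-\tilde x}$.

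\emph{RSK identification.} First I would verify that the $M, N \to \infty$ limit of the plane partition measure~\eqref{eq:pp_measure} is well defined: the partition function $\prod_{i,j \geq 1}(1 - aQ^{i-1/2}\tilde{Q}^{j-1/2})^{-1}$ converges, yielding a random plane partition on the whole quadrant whose central slice $\lambda^{(0)}$ is distributed as a Schur measure with specializations $(aQ^{i-1/2})_{i \geq 1}$ and $(a\tilde{Q}^{j-1/2})_{j \geq 1}$. The standard Robinson--Schensted--Knuth correspondence applied to the matrix of weights $(w_{ij})_{i,j \geq 1}$ with $w_{ij} \sim {\rm Geom}(aQ^{i-1/2}\tilde{Q}^{j-1/2})$ realizes this Schur measure and identifies $L^{\rm geo}_1$ with $\lambda_1^{(0)}$ almost surely (Greene's identity). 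For $L^{\rm geo}_2$ I would run RSK on a row-reflected/relabeled version of the weight matrix: the law of the matrix is invariant under such reflection up to relabeling, and the transposition symmetry of RSK then yields $L^{\rm geo}_2 \stackrel{d}{=} \lambda_1^{(0)}$ as well. This gives both the distributional equality $L^{\rm geo}_1 \stackrel{d}{=} L^{\rm geo}_2$ and the link to the discrete MB process.

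\emph{Scaling limit.} Next I would pass to the continuous MB ensemble via the substitution of Theorem~\ref{thm:cont_mb_dist}: $\lambda_1^{(0)} = -\epsilon^{-1} \log x_1^{(0)}$, where $x_1^{(0)}$ is the smallest point of the continuous MB ensemble $\x^{(0)}$. The event of the theorem rearranges as
\begin{equation}
    \epsilon L + \frac{\log(\epsilon\eta)}{\eta} + \frac{\log(\epsilon\theta)}{\theta} < s \;\Longleftrightarrow\; x_1^{(0)} > e^{-s} (\epsilon\eta)^{1/\eta}(\epsilon\theta)^{1/\theta}.
\end{equation}
Comparing this with the hard-edge rescaling $x \mapsto x/(M^{1/\eta}N^{1/\theta})$ of Theorem~\ref{thm:he_kernel}, the effective scale identification is $M^{1/\eta}N^{1/\theta} \sim (\epsilon\eta)^{-1/\eta}(\epsilon\theta)^{-1/\theta}$, equivalently $M \sim (\epsilon\eta)^{-1}$ and $N \sim (\epsilon\theta)^{-1}$ as $\epsilon \to 0$. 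Using the $t = 0$ instance of Theorem~\ref{thm:gap_prob_cont} in the infinite-quadrant limit, the probability in question converges to $\det(1 - K_{he}(0,\cdot;0,\cdot))_{L^2(0, e^{-s})}$. Finally, the substitution $x = e^{-\tilde x}$ is a diffeomorphism $(0, e^{-s}) \to (s, \infty)$ with Jacobian $|dx| = e^{-\tilde x}|d\tilde x|$; distributing this Jacobian symmetrically on both kernel arguments (i.e.\ conjugating by the multiplication operator $\sqrt{e^{-\tilde x}}$) converts $K_{he}(0, e^{-\tilde x}; 0, e^{-\tilde y})$ into
\begin{equation}
    e^{-\tilde x/2 - \tilde y/2}\, K_{he}(0, e^{-\tilde x}; 0, e^{-\tilde y}) = \tilde K_{he}(\tilde x, \tilde y),
\end{equation}
yielding $\det(1 - K_{he})_{L^2(0, e^{-s})} = \det(1 - \tilde K_{he})_{L^2(s, \infty)}$ as claimed.

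\emph{Main obstacle.} The principal technical difficulty will be upgrading pointwise kernel convergence to Fredholm determinant (i.e.\ trace-class) convergence in the combined limit $M, N, \epsilon^{-1} \to \infty$: since $L$ diverges logarithmically in $\epsilon^{-1}$, one needs uniform steepest-descent bounds on the discrete kernel $K_d$ (or equivalently on its $M, N \to \infty$ infinite $q$-Pochhammer limit) in the scaling window $\{k \gtrsim -\epsilon^{-1}\log \epsilon\}$ in order to dominate the trace norm. Such estimates are analogous in spirit to those of~\cite{joh00,bor99}. A secondary subtlety is the RSK identification of $L^{\rm geo}_2$, whose path direction differs from $L^{\rm geo}_1$; the argument will require care in exploiting a suitable symmetry of the geometric weight matrix together with the transposition invariance of RSK.
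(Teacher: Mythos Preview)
Your RSK identification and the final change of variables $x=e^{-\tilde x}$ are fine, but the central scaling-limit step is not a valid reduction to Theorems~\ref{thm:cont_mb_dist} and~\ref{thm:gap_prob_cont}, and the paper proceeds differently there. In Theorem~\ref{thm:disc_lpp} the lattice is already $M=N=\infty$ and $\epsilon$ is the only parameter; there is no finite box to which Theorems~\ref{thm:cont_mb_dist}/\ref{thm:gap_prob_cont} apply (those are proved with $M,N$ fixed and $\epsilon\to 0$ first, then separately $M,N\to\infty$). Your ``effective scale identification'' $M\sim(\epsilon\eta)^{-1}$, $N\sim(\epsilon\theta)^{-1}$ is a heuristic scale-matching, not a limit licensed by the cited results; turning it into a proof would require a diagonal argument with uniform-in-$(M,N)$ control that you do not have and that the paper does not supply.

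The paper instead works directly with the $M=N=\infty$ Schur-measure kernel $\tilde K_d$, whose $F_d(z)=(\sqrt a\,\tilde Q^{1/2}/z;\tilde Q)_\infty/(\sqrt a\,Q^{1/2}z;Q)_\infty$ now involves \emph{infinite} $q$-Pochhammer symbols, and takes a single limit $\epsilon\to 0$. The relevant estimate is not the finite $(u^c;u)_n/(1-u)^n\to(c)_n$ but the full $q$-Gamma expansion
\[
\log(u^c;u)_\infty=-\tfrac{\pi^2}{6r}+(\tfrac12-c)\log r+\tfrac12\log(2\pi)-\log\Gamma(c)+O(r),\qquad u=e^{-r},
\]
whose $(\tfrac12-c)\log r$ term is exactly what produces the shift $\frac{\log(\epsilon\eta)}{\eta}+\frac{\log(\epsilon\theta)}{\theta}$; the Gamma ratio $F_{he}$ emerges directly without passing through $F_c$. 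Fredholm convergence is then obtained by conjugating the rescaled $\tilde K_d$ to get uniform exponential decay and applying Hadamard's bound.

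A smaller gap: your reflection argument for $L_2^{\rm geo}$ does not work, since the array $\omega_{i,j}\sim{\rm Geom}(aQ^{i-1/2}\tilde Q^{j-1/2})$ is not invariant in law under row reflection (and there is no natural reflection on the infinite quadrant). The paper instead invokes the column-insertion (Burge) correspondence: row RSK and column RSK are both bijections sampling the same Schur process, and by Krattenthaler's Greene-type theorem column RSK yields $\lambda_1^{(0)}=L_2^{\rm geo}$, whence $L_1^{\rm geo}\stackrel{d}{=}L_2^{\rm geo}$.
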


\begin{rem}
    Writing $\epsilon = 1/R$, we observe that $L$ has order $O(R \log R)$ and $O(R)$ fluctuations. Contrast this with the $R^{1/3}$ limits of Johansson~\cite{joh00} and compare with similar (exponential) results of~\cite[Thm.~1.1a]{joh08}.
\end{rem}

\begin{rem} \label{rem:interpolation}
    Let us consider the \emph{equi-distributed-by-diagonal} case $\eta = \theta = 1$ (meaning on anti-diagonal $i+j=k+1$ we place $k$ iid ${\rm Geom} (a q^k)$ random variables---see Figure~\ref{fig:disc_lpp} (right)). In that case we have
    \begin{equation}
        \begin{split}
        \tilde{K}_{he} (x, y) &= e^{-\frac{x}{2}-\frac{y}{2}} K_{\alpha, \rm Bessel} (e^{-x}, e^{-y}) \\
                              &= \int\limits_{-\delta+i\R} \frac{d \omega}{2 \pi i} \int\limits_{\delta+i\R} \frac{d \zeta}{2 \pi i}  \frac{\Gamma(\tfrac{\alpha}{2} + \tfrac12 - \zeta)}{\Gamma(\tfrac{\alpha}{2} + \tfrac12 + \zeta)} \frac{\Gamma(\tfrac{\alpha}{2} + \tfrac12 + \omega)} {\Gamma(\tfrac{\alpha}{2} + \tfrac12 - \omega)} \frac{e^{-x\zeta}}{e^{-y\omega}}\frac{1}{\zeta-\omega} 
        \end{split}
    \end{equation}
    ($0 < \delta < \tfrac12$) with $K_{\alpha, \rm Bessel}$ the hard-edge Bessel kernel of~\eqref{eq:bessel_kernel}. Let us write 
    \begin{equation}
        F_{\alpha}(s) = \det(1-\tilde{K}_{he})_{L^2(s, \infty)}
    \end{equation}
    and note the following Gumbel to Tracy--Widom interpolation property of Johansson~\cite{joh08}:
    \begin{itemize}
        \item $\lim_{\alpha \to 0} F_{\alpha}(s) = F_0(s) = e^{-e^{-s}}$ with the latter the Gumbel distribution\footnote{See second equation after (1.8) in~\cite{joh08} but note we believe there is a typo and the equation should read, in Johansson's notation, $U_{-1/2}(s)=\exp(-\exp(-s))$ (the parameter matching between our notation and Johansson's is $\alpha = 2 \beta + 1$). We also have numerical evidence for this using the method of Bornemann~\cite{bor10} for computing Fredholm determinants (and comparing to the Gumbel distribution). This Gumbel result is further consistent with and a mildly weaker form of a result of Vershik--Yakubovich~\cite[Thm.~1]{vy06} (with $c = \beta = 1, x = q$ for the correspondence between their notation and ours); the link here is combinatorial as $L$ is in distribution the same as the largest part of a $q^{\rm volume}$ distributed plane partition with unrestricted base (in other words, $M=N=\infty$ in the notation of Section~\ref{sec:pp}). What \emph{we do not have} is a \emph{direct proof} intrinsic to our results that $F_0(s)=\exp(-\exp(-s))$ is the Gumbel distribution.};
        \item $\lim_{\alpha \to \infty} F_{\alpha}(-2 \log(2(\alpha-1)) + (\alpha-1)^{-2/3} s) = F_{\rm TW}(s)$ with the latter the Tracy--Widom GUE distribution~\cite{tw94_airy}\footnote{See~\cite{joh08}, first equation after (1.8), with $\alpha=2 \beta+1$ matching our notation to his. This result follows from the work of Borodin--Forester~\cite{bf03} and can be seen directly via Nicholson's approximation~\cite[Thm.~2.27]{rom15} that $M^{1/3} J_{2M+x M^{1/3}} (2M) \to Ai(x), \ M \to \infty$ with $Ai$ the Airy function, showing the Bessel kernel converges to the Airy kernel.}.
    \end{itemize}
\end{rem}

\begin{rem}
    For another distribution---the \emph{finite-temperature Tracy--Widom distribution}---interpolating between Gumbel and Tracy--Widom GUE, see Johansson's paper~\cite{joh07} for a random matrix model with largest eigenvalue converging to said distribution, and~\cite{bb19} for a discrete LPP-like model (and references therein for further models related to the Kardar--Parisi--Zhang equation). We further note that the Gumbel distribution appears universally in the study of maxima of iid random variables, while Tracy--Widom GUE in the study of maxima of correlated (often determinantal) random variables like largest eigenvalues of hermitian random matrices. We do not have a very good understanding for this fact in our model, certainly not on the Gumbel side, but we do have direct and heuristic evidence.
\end{rem}

% \TODO{other things?}

\subsubsection{A continuous result} \label{sec:cont_lpp}

For the continuous result, we use the following parameters: $\alpha \geq 0, \eta, \theta > 0$ together with an integer parameter $N \geq 1$. On the lattice $(i, j)_{N \geq i, j \geq 1}$ place, at $(i, j)$ on the diagonal $i+j=k+1$, a random variable $\omega^{\rm pow}_{i,j} \sim {\rm Pow} (\alpha + \eta(i - \tfrac12) + \theta (k-i+\tfrac12))$, independent of the rest. Let
\begin{itemize}
    \item $L^{\rm pow}_1 = \min\limits_{\pi} \prod\limits_{(i,j) \in \pi} \omega^{\rm pow}_{i,j}$ where the minimum is over all down-left paths $\pi$ from $(1,1)$ to $(N, N)$;
    \item $L^{\rm pow}_2 = \min\limits_{\varpi} \prod\limits_{(i,j) \in \varpi} \omega^{\rm pow}_{i,j}$ where the minimum is over all down-right paths $\varpi$ from $(N, 1)$ to $(1, N)$.
\end{itemize}
The setting is depicted in Figure~\ref{fig:cont_lpp}, along with two representative paths/polymers: $\pi$ for $L^{\rm pow}_1$ (in orange) and $\varpi$ for $L^{\rm pow}_2$ (in blue). Notice that each path picks exactly $2N-1$ strictly positive (and $< 1$) random variables in the product under minimization. By contrast, in the geometric setting we consider sums of infinitely many random (integer) variables---the geometry is infinite, but only finitely many of these latter numbers are non-zero as explained in the previous section. 

\begin{figure}[!h]
    \centering
    \includegraphics[scale=0.5]{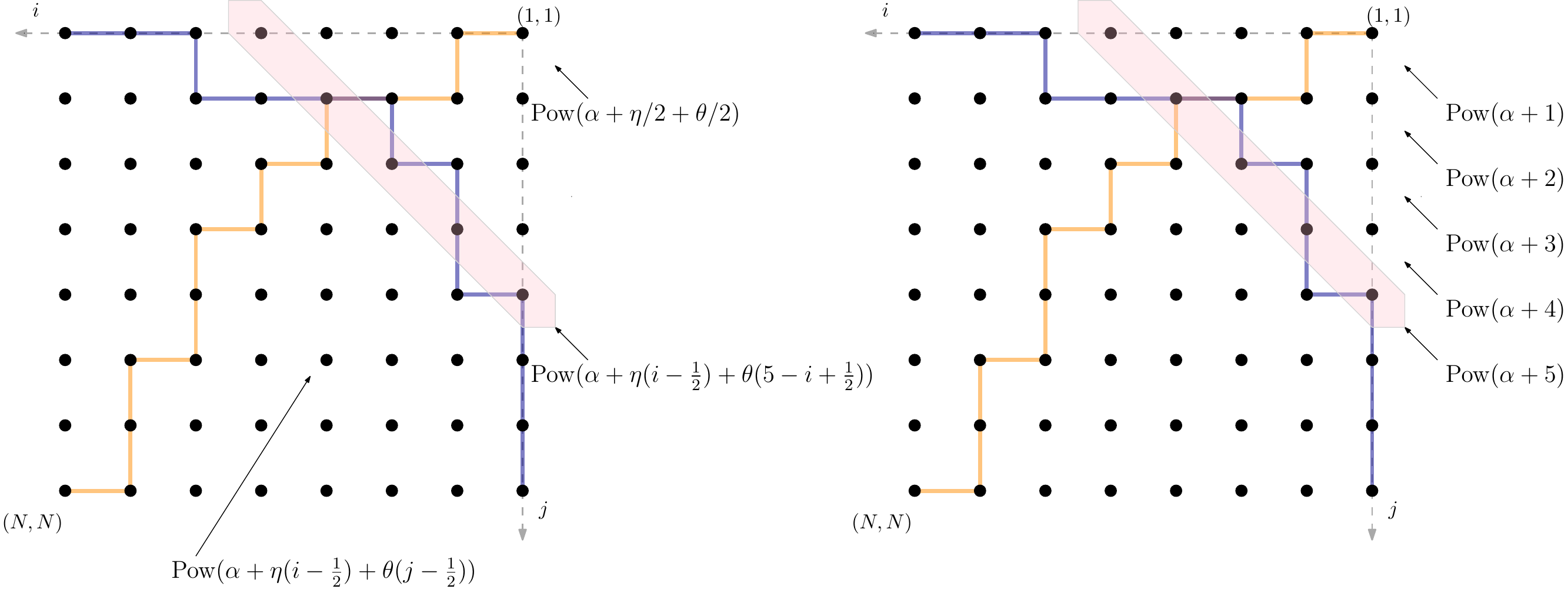} \quad
    \caption{Left: the setting for Theorem~\ref{thm:cont_lpp} and two possible paths that enter into the minima for $L^{\rm pow}_1$ (orange) and $L^{\rm pow}_2$ (blue). Right: the equi-distributed-by-diagonal case ($\eta = \theta = 1$) where each anti-diagonal $i+j = k+1$ has iid ${\rm Pow}(\alpha + k)$ random variables on it.}
    \label{fig:cont_lpp}
\end{figure}

We have the following result, an extension in exponential coordinates of~\cite[Thm.~1.1a]{joh08}.

\begin{thm} \label{thm:cont_lpp}
    $L^{\rm pow}_1$ and $L^{\rm pow}_2$ are equal in distribution. Moreover, if $L \in \{ L^{\rm pow}_1, L^{\rm pow}_2 \}$, we have
    \begin{equation}
        \lim_{N \to \infty} \P \left( \frac{L}{N^{\frac{1}{\eta}+\frac{1}{\theta}}} < r \right) = \det (1-K_{he}(0, \cdot; 0, \cdot))_{L^2 (0,r)}
    \end{equation} 
    where $K_{he} (0, x; 0, y)$ is the time 0 hard-edge kernel of Theorem~\ref{thm:he_kernel}.
\end{thm}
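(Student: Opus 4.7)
The plan is to identify both last-passage times $L^{\rm pow}_1$ and $L^{\rm pow}_2$ in distribution with the smallest particle $x^{(0)}_1$ of the time-zero slice of the continuous Muttalib--Borodin process from Theorem~\ref{thm:cont_mb_dist} with $M=N$, and then to invoke the hard-edge asymptotic of Theorem~\ref{thm:gap_prob_cont} at $t=0$.

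First, I would establish the identity $L^{\rm pow}_i \stackrel{d}{=} x^{(0)}_1$ for $i=1,2$ at the discrete level and then pass to the continuous limit. The plane-partition measure~\eqref{eq:pp_measure} with base $M=N$ is a principally specialized Schur process; its central slice $\lambda^{(0)}$ is a Schur measure whose largest part $\lambda^{(0)}_1$, via the Fomin/RSK growth dynamics in the spirit of~\cite[Section~2.4]{fr05} (and of the sampling algorithms developed in Section~\ref{sec:lpp_proofs}), equals in law the last passage time along down-left paths in a finite $N\times N$ geometric environment whose parameters match those of the measure. A dual RSK variant identifies $\lambda^{(0)}_1$ also with the down-right last passage time, so both finite-geometry versions coincide in law with $\lambda^{(0)}_1$.

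Next I would pass to the continuous limit $q=e^{-\epsilon}$, $a=e^{-\alpha\epsilon}$, $\epsilon\to 0^+$, using the $\mathrm{Geom}\to\mathrm{Pow}$ correspondence recorded at the start of Section~\ref{sec:lpp}. Under this scaling, $e^{-\epsilon\lambda^{(0)}_1}$ converges in distribution to $x^{(0)}_1$ by the same weak-convergence argument that underlies Theorem~\ref{thm:cont_mb_dist}, while the exponentiated (via $e^{-\epsilon\,\cdot}$) geometric LPP becomes a minimum of products of power random variables with the matching parameters $\alpha + \eta(i-\tfrac12) + \theta(j-\tfrac12)$, namely $L^{\rm pow}_i$ itself. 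This yields $L^{\rm pow}_1 \stackrel{d}{=} L^{\rm pow}_2 \stackrel{d}{=} x^{(0)}_1$, which already proves the first assertion of the theorem. Applying Theorem~\ref{thm:gap_prob_cont} at $t=0$ with $M=N$ then gives
\[
\lim_{N\to\infty} \P\!\left(\frac{x^{(0)}_1}{N^{1/\eta+1/\theta}}<r\right) = \det\bigl(1-K_{he}(0,\cdot;0,\cdot)\bigr)_{L^2(0,r)},
\]
and combining with the identification above completes the proof.

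The main obstacle is the first step, namely matching the principal specializations of the Schur process to the inhomogeneous geometric weights on the finite $N\times N$ grid and handling both the down-left and down-right path orientations via the standard and dual RSK variants respectively; the parameter bookkeeping between the two sides needs to be done carefully so that the continuous $q\to 1$ limit lands on exactly the power distributions ${\rm Pow}(\alpha + \eta(i-\tfrac12) + \theta(j-\tfrac12))$ appearing in the statement of the theorem. Once this finite identification is in place, the continuous limit and the hard-edge asymptotic are routine applications of the already-established Theorems~\ref{thm:cont_mb_dist} and~\ref{thm:gap_prob_cont}.
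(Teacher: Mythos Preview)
Your proposal is correct and follows essentially the same route as the paper: identify $L^{\rm pow}_1 \stackrel{d}{=} L^{\rm pow}_2 \stackrel{d}{=} x^{(0)}_1$ via the $q\to 1$ limit of the discrete Greene--Krattenthaler identity (using row and column RSK respectively for the two path orientations), then apply Theorem~\ref{thm:gap_prob_cont} at $t=0$. The paper simply packages the first step as a separate result (the continuous Greene--Krattenthaler Theorem~\ref{thm:greene_cont_fin}) and cites it, whereas you sketch that same argument inline; one terminological remark is that what you call the ``dual RSK variant'' for the down-right paths is the column insertion (Burge) rule $colRSK$ in the paper's language.
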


\section{Proofs of Theorems~\ref{thm:disc_mb_dist} and~\ref{thm:disc_corr}} \label{sec:disc_proofs}

In this section we use the technology of \emph{Schur measures and processes}~\cite{oko01, or03} of Okounkov and Reshetikhin to prove the announced results. We do not go into the details as by now there are many places where these tools have been used, (re)proven, and extended: see~\cite{oko01, or03, br05, bbnv18, bb19} for a sample of results and generalizations.

We start with proving Theorem~\ref{thm:disc_mb_dist}. 
\begin{proof}[Proof of Thm.~\ref{thm:disc_mb_dist}]
    The measure~\eqref{eq:pp_measure} comes from a~\emph{Schur process}~\cite{or03}. More precisely, representing a plane partition $\Lambda$ as a sequence of interlacing partitions $(\lambda^{(t)})_{-M \leq t \leq N}$ (extremities being $\emptyset$) as in~\eqref{eq:disc_interlacing}, we have
    \begin{equation} \label{eq:schur_proc}
        \P (\Lambda) = Z^{-1} \prod_{i=0}^{M-1} s_{\lambda^{(-i)} / \lambda^{(-i-1)}} (\sqrt{a} Q^{i+1/2}) \prod_{i=0}^{N-1} s_{\lambda^{(i)} / \lambda^{(i+1)}} (\sqrt{a} \tilde{Q}^{i+1/2})
    \end{equation}
    where $Z = \prod_{i=1}^M \prod_{j=1}^N (1-a Q^{i-1/2} \tilde{Q}^{j-1/2})^{-1}$ is the partition function, $(Q, \tilde{Q}) = (q^\eta, q^\theta)$, and $s_{\lambda / \mu}$ are skew Schur polynomials (functions)~\cite[Ch.~I.5]{mac} which, when evaluated in one variable, give the desired contributions to the measure as $s_{\lambda / \mu} (x) = x^{|\lambda| - |\mu|} \Id_{\mu \prec \lambda}$ (recall that ${\rm left\ vol} = \sum_{i=-M}^{-1} |\lambda^{(i)}|, {\rm central\ vol} = |\lambda^{(0)}|, {\rm right\ vol} = \sum_{i=1}^{N} |\lambda^{(i)}|$). 

    As such, any marginal distribution of a $\lambda^{(t)}$ is a Schur measure~\cite{oko01}. Let us explain this and prove the result for $-M+1 \leq s = -t \leq 0$ (the proof for positive times $s > 0$ follows along the same lines). Precisely we have~\cite{or03} (below $t>0$ and we look at time $-M+1 \leq -t \leq 0$):
    \begin{equation}
        \begin{split}
        \P (\lambda^{(-t)} = \lambda) &= Z^{-1} s_{\lambda} (\sqrt{a} Q^{t+1/2}, \sqrt{a} Q^{t+3/2}, \dots, \sqrt{a} Q^{M-1/2})  s_{\lambda} (\sqrt{a} \tilde{Q}^{1/2}, \sqrt{a} \tilde{Q}^{3/2}, \dots, \sqrt{a} \tilde{Q}^{N-1/2}) \\
            &= Z^{-1} a^{|\lambda|} Q^{(t+1/2)|\lambda|} \tilde{Q}^{|\lambda|/2} s_{\lambda} (1, Q, \dots, Q^{M-t-1}) s_{\lambda} (1, \tilde{Q}, \dots, \tilde{Q}^{N-1})
        \end{split}
    \end{equation}
    where $Z = \prod_{j=t+1}^M \prod_{i=1}^N (1-a Q^{i-1/2} \tilde{Q}^{j-1/2})^{-1}$ is the partition function, $s_{\lambda}$ are regular Schur polynomials ($s_\lambda = s_{\lambda/\emptyset}$), and we have used the homogeneity of the latter to get the second equation from the first, namely that $s_{\lambda} (c x_1, \dots, c x_n) = c^{|\lambda|} s_{\lambda} (x_1, \dots, x_n)$. 

    To finish, let us first notice that $\ell(\lambda) \leq M-t = L_t$ (we are looking at slice $-t$: $\lambda^{(-t)} = \lambda$). Specializing Schur polynomials in a geometric progression (the \emph{principal specialization}) is explicit~\cite[Ch.~I.3]{mac}:
    \begin{equation}
        s_{\lambda}(1, u, \dots, u^{n-1}) = \prod_{1 \leq i < j \leq n} \frac{u^{\lambda_i + L -i} - u^{\lambda_j + L - j}}{u^{L -i} - u^{L - j}}
    \end{equation} 
    where $L \geq \ell(\lambda)$ is arbitrary but big enough. We continue by taking $L=M$ above as global shift, and expanding the two Schur functions into two Vandermonde products of \emph{different} lengths $L_t$ and $N$ respectively. Finally, we recall that $l_i = \lambda_i + M - i$ and so $l_i = M-i$ for $i > L_t$. Thus the length $N$ Vandermonde can be rewritten as a length $L_t$ Vandermonde product times univariate factors of the form $(\tilde{Q}^{M-j}-\tilde{Q}^{l_i}) = \tilde{Q}^{M-j} (1-\tilde{Q}^{l_i - M + j})$ (for $N \geq j > L_t = M-t$). These factors give rise to the second Jacobi-like factor in the weight $w_d$. The first comes from writing $a^{|\lambda|} \propto \prod_{1 \leq i \leq L_t} a^{l_i}$ (and similarly for $\tilde{Q}^{|\lambda|/2}$ and $Q^{(t+1/2)|\lambda|}$) where we ignore gauge factors independent of the $l_i$'s. Elementary algebra finishes the proof.
\end{proof}

\begin{proof}[Proof of Thm.~\ref{thm:disc_corr}]

Okounkov and Reshetikhin~\cite{or03} have proven that the following extended point process
\begin{equation}
    \mathfrak{S} = \{ (t, \lambda^{(t)}_i - i + 1/2) \, :\, -M+1 \leq t \leq N - 1,\, i \geq 1 \}
\end{equation}
associated with the Schur process corresponding to $\Lambda$ in~\eqref{eq:schur_proc} is determinantal. More precisely, if we fix $-M+1 \leq t_1 < t_2 < \cdots < t_n \leq N-1$ and $k_i \in \Z+\tfrac12$ ($i=1 \dots n$), we have~\cite{or03}
\begin{equation}
    \P \left( k_i \in \{ \lambda^{(t)}_i - i + 1/2) \}, \forall \ 1 \leq i \leq n \right) = \det_{1 \leq i, j \leq n} \tilde{K}_d (t_i, k_i; t_j, k_j)
\end{equation}
where the time-extended kernel $\tilde{K}_d$ is given by:
\begin{equation} \label{eq:tilde_K_d}
    \tilde{K}_d (s, k; t, \ell) = \oint\limits_{|z| = 1 \pm \delta} \frac{dz}{2 \pi i z} \oint\limits_{|w| = 1 \mp \delta} \frac{dw}{2 \pi i w} \frac{F_d(s, z)}{F_d(t, w)} \frac{w^\ell}{z^k}\frac{\sqrt{zw}}{z-w} 
\end{equation}
where $F_d$ is as in the statement of Theorem~\ref{thm:disc_corr}; $\delta$ is small enough---see the same statement again and Figure~\ref{fig:K_d_cont} below; and for the contours we choose $|w| < |z|$ if $s \leq t$ and $|w| > |z|$ if $s  > t$. In the latter case of $s > t$ we first exchange the $w$ and $z$ contours picking up the residue which is the single contour integral in $K_d$. Shifting $k, \ell$ by $M+1/2$ and noticing that any slice $t$ has only at most the first $L_t$ particles in an ``excited state'' (equivalently $\ell(\lambda^{(t)}) \leq L_t$) completes the proof, giving the final formula for $K_d$ as a shifted $\tilde{K}_d$. 
\end{proof}

\begin{figure}[!h]
    \begin{center}
        \includegraphics[scale=0.5]{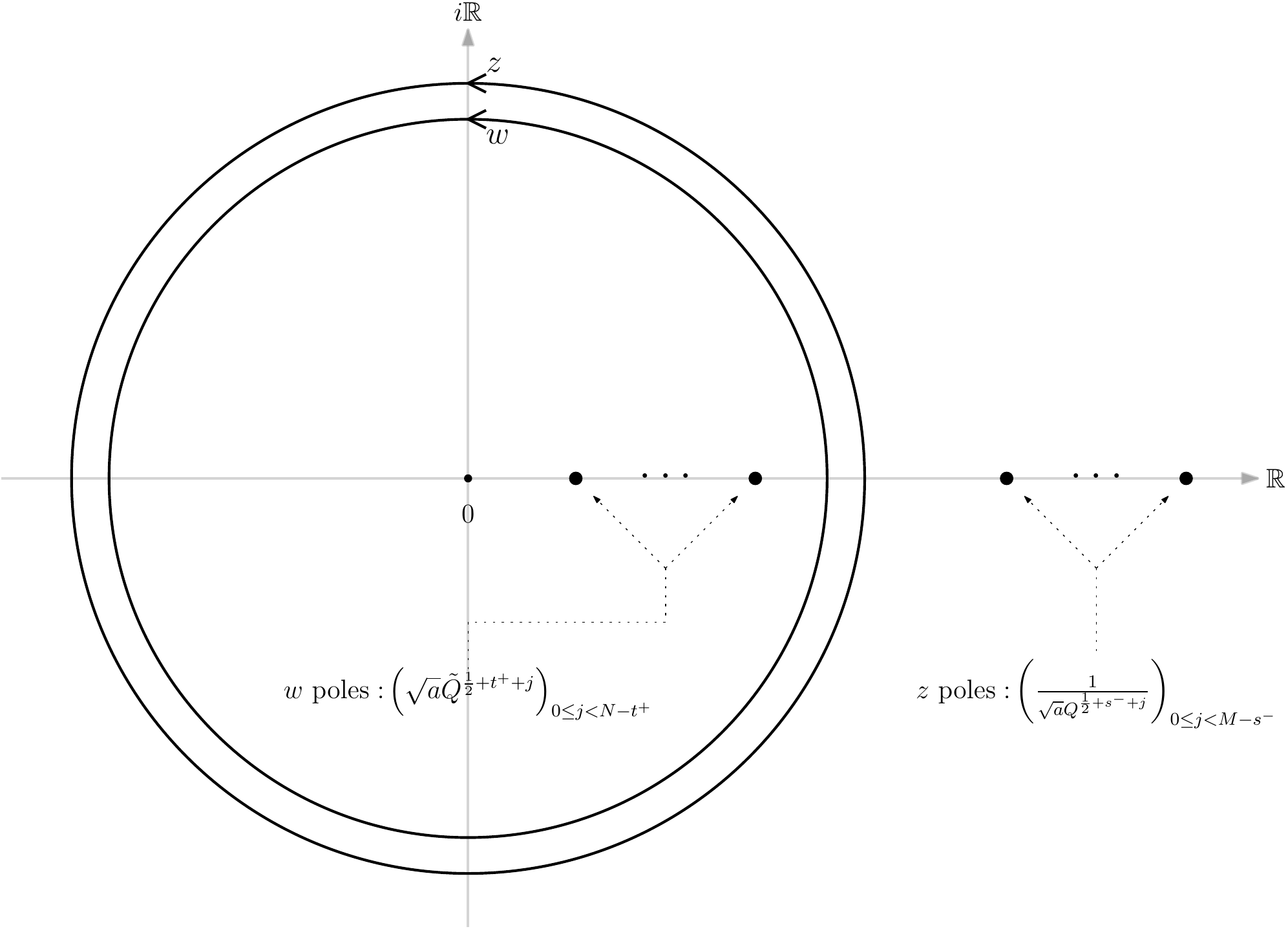}
    \end{center}
    \caption{The contours for $K_d$ in the case $s \leq t$. If $s>t$ the $w$ contour is reversed with the $z$ contour initially, and one exchanges them picking up a residual single contour integral. In the end they end up in the same order: $z$ on the outside and $w$ on the inside.}
    \label{fig:K_d_cont}
\end{figure}

\section{Proofs of Theorems~\ref{thm:cont_mb_dist}, \ref{thm:cont_corr}, and~\ref{thm:he_kernel}} \label{sec:cont_proofs}

The idea behind the proofs in this section is very simple: the first two proofs are $q \to 1-$ limits of Theorems~\ref{thm:disc_mb_dist} and~\ref{thm:disc_corr}; the proof of the hard-edge limit Theorem~\ref{thm:he_kernel} is a simple application of Stirling's approximation.

\begin{proof}[Proof of Theorem~\ref{thm:cont_mb_dist}] 

    Let us summarize the proof in words. As $q \to 1-$, the process $\Lambda$, via the associated shifted and truncated process $(l^{(t)})_t$, converges to the process ${\bf X} = (\x^{(t)})_t$ from the statement in the sense of weak convergence of finite dimensional distributions. Our proof is just a simple modification of the argument of Borodin--Gorin in~\cite[Section 2.3]{bg15}: instead of the Macdonald processes there we substitute our Schur process from~\eqref{eq:schur_proc}, and instead of the single principal specialization there we use two different principal specializations here (with steps $(Q, \tilde{Q}) = (q^\eta, q^\theta)$). 

    Let us sketch the argument. Recall the discrete setting: we start with $M,N$ fixed and a sequence of partitions $\Lambda$ as in~\eqref{eq:disc_interlacing} yielding the discrete process $(\lambda^{(t)})_{-M+1 \leq t \leq N-1}$ with $\ell(\lambda^{(t)}) \leq L_t$ by the interlacing constraints. Recall also the distribution on $\Lambda$ given in~\eqref{eq:pp_measure}. 
    
    Fixing $t$ and looking at the shifted ensemble $\{\lambda^{(t)}_i + M - i|i\geq 1\}$ we see that, due to the length constraints, we have that $\{\lambda^{(t)}_i + M - i|i > L_t\} = \{M-L_t-j|j \geq 1\}$ is fixed and deterministic throughout, so the only randomness is in the points of $\{\lambda^{(t)}_i + M - i|i \leq L_t\}$---this is the ensemble $l^{(t)}$. In the desired $q \to 1-$ limit:
\begin{equation}
    q = e^{-\epsilon}, \quad a = e^{-\alpha \epsilon}, \quad \lambda^{(t)}_i = -\frac{\log x_i^{(t)}}{\epsilon}, \quad \epsilon \to 0+
\end{equation}
with $\alpha \geq 0$ fixed a priori, the points in $q^{(l^{(t)})_t}$ (the non-trivial points in $\Lambda$ up to shift) go to points in a process ${\bf X}$ of interlacing vectors of real numbers in $(0, 1)$
\begin{equation}
    {\bf X} = \emptyset \prec \x^{(-M+1)} \prec \cdots \prec \x^{(-1)} \prec \x^{(0)} \succ \x^{(1)} \succ \cdot \succ \x^{(N-1)} \succ \emptyset
\end{equation}
where, from the discussion above, each $\x^{(t)}$ has exactly $L_t$ points. The discrete measure on $\Lambda$ from~\eqref{eq:pp_measure} becomes, via direct computations, the following measure:
\begin{equation}
    \begin{split}
    \P({\bf X}) d{\bf X} &= Z^{-1} \prod_{t = -M+1}^{-1} \prod_{i=1}^{L_t} \left( x^{(t)}_i \right)^{\eta-1} \cdot \prod_{i=1}^{M} \left( x^{(0)}_i \right)^{\alpha + \frac{\eta+\theta}{2} - 1} \cdot \prod_{t = 1}^{N-1} \prod_{i=1}^{L_t} \left( x^{(t)}_i \right)^{\theta-1} d{\bf X}\\
    &= Z^{-1} \prod_{t = -M+1}^{-1} \left| x^{(t)}_i \right|^{\eta-1} \cdot \left| x^{(0)}_i \right|^{\alpha + \frac{\eta+\theta}{2} - 1} \cdot \prod_{t = 1}^{N-1} \left| x^{(t)}_i \right|^{\theta-1}d{\bf X}
    \end{split}
\end{equation}
where $Z = \prod_{i=1}^M \prod_{j=1}^N (\alpha + \eta(i-\frac{1}{2}) + \theta(j-\frac{1}{2}))^{-1}$ is the partition function; $d {\bf X} = \prod_{-M+1 \leq t \leq N-1} \prod_{i=1}^{L_t} d x_i^{(t)}$; we have denoted $|\x| = \prod_i x_i$; and the $-1$ in each exponent comes from the differential $d \lambda^{(t)}_i \propto d x^{(t)}_i / x^{(t)}_i$. 

Moreover, the discrete marginals for $l^{(t)}$ also converge to the continuous marginals for $\x^{(t)}$ as announced. Clearly $Q^{l_i} \to x_i^{\eta}$; similarly for $\tilde{Q}$ and $\theta$; and likewise for $a^{l_i} \to x_i^\alpha$. What is less clear is what becomes of $w_d$ and for this we use the simple estimate 
\begin{align}
    \frac{(u^a f(u); u)_{\infty}}{(u^b f(u); u)_{\infty}} \to (1-f)^{b-a}
\end{align}
where $u \to 1-$, $f(u) \to f \in (0, 1)$, and to convert the finite length $q$-Pochhammer symbols to infinite ones we use $(x; u)_n = (x; u)_\infty/(x u^n; u)_\infty$ with $u \in \{Q, \tilde{Q}\}$ and $x$ as needed. This estimate is uniform over compact sets; see e.g.,~\cite[Lemma 2.4]{bg15} for an elementary proof. This shows that $w_d \to w_c$ as $q \to 1-$.

We make an important observation: we have been ignoring, for simplicity, powers of $\epsilon$ which need to be premultiplied to make, as $q \to 1-$, both sides of $\P(l^{(t)}) \to \P(\x^{(t)})$ finite. These powers can be easily recovered, and in fact are \emph{the same} as in~\cite[Section 2.3]{bg15}. This finishes the proof.
\end{proof}

\begin{proof}[Proof of Theorem~\ref{thm:cont_corr}] 
    Let us consider the desired $q \to 1-$ limit:
    \begin{equation}
        q = e^{-\epsilon}, \quad a = e^{-\alpha \epsilon}, \quad (k, \ell) = -\epsilon^{-1} (\log x, \log y), \quad \epsilon \to 0+ 
    \end{equation}
where $\alpha \geq 0$ is fixed and $x, y \in (0, 1)$. We show that, uniformly for $x, y$ in compact sets, we have
\begin{equation}
    \epsilon^{s-t-1} K_d (s, k; t, \ell) \to K_c (s, x; t, y).
\end{equation}

Ideas and computations here are standard and similar to those of e.~g.~Appendix C.3 in \cite{bfo20}, except we use different estimates since our ``action'' (integrand) is different. 

We can estimate the double contour integrand of $K_d$ using the following simple formula (uniform in $c$):
\begin{equation}
    \frac{(u^c; u)_n}{(1-u)^n} \to (c)_n, \quad u = e^{-r}, \quad r \to 0+
\end{equation}
with $u \in \{Q, \tilde{Q}\} = \{ q^\eta, q^\theta \}$ and $n$ ranging over the various lengths of $q$-Pochhammer symbols in $F_d(s, z)$ and $F_d(t, w)$. We first change variables as $(z, w) = (e^{\epsilon \zeta}, e^{\epsilon \omega})$, the contours becoming vertical lines close and parallel to $i \R$. Then we have the estimate
\begin{equation}
    \begin{split}
        \frac{F_d(s, z)}{F_d(t, w)} &\approx \frac{(\epsilon \theta)^{N-s^+} (\epsilon \eta)^{-(M-s^-)} }{ (\epsilon \theta)^{N-t^+} (\epsilon \eta)^{-(M-t^-)}  }    \cdot \frac{(\frac{\alpha}{2 \theta} + \frac{\zeta}{\theta} + s^+ + \frac{1}{2})_{N - s^+} } {(\frac{\alpha}{2 \eta} - \frac{\zeta}{\eta} + s^- + \frac{1}{2})_{M - s^-}}  \frac {(\frac{\alpha}{2 \eta} - \frac{\omega}{\eta} + t^- + \frac{1}{2})_{M - t^-}}  {(\frac{\alpha}{2 \theta} + \frac{\omega}{\theta} + t^+ + \frac{1}{2})_{N - t^+} } \\
        &\approx \epsilon^{t-s} \cdot \frac{F_c(s, \zeta)}{F_c(t, \omega)} \\
        \frac{w^\ell}{z^k} \frac{dz dw}{z-w} &\approx \epsilon \cdot \frac{x^\zeta}{y^\omega} \frac{d \zeta d \omega}{\zeta - \omega}
    \end{split}
\end{equation}
where the extra $\epsilon, \eta, \theta$ factors in the first estimate come from the left-overs $\frac{(1-\tilde{Q})^{N-s^+} (1-Q)^{-(M-s^-)} }{(1-\tilde{Q})^{N-t^+} (1-Q)^{-(M-t^-)}}$. The single contour integrand (for $s > t$) is estimated in a similar way (after all, it is just a residue of the double contour one). Finally let us mention that the factor $1/\sqrt{xy}$ in front of $K_c$ comes from the differential $dk \propto dx/x$---this introduces a factor $1/x$ in the kernel which we then conjugate to $1/\sqrt{xy}$ to make it more symmetric. 
% and we notice that the $(k, \ell)$ scaling together with the change of variables clear away the $\epsilon$-dependent exponential factor one $w^\ell/z^k$ is plugged in.

By choosing the $(\zeta, \omega)$ contours (for the double contour integral) close to $i\R$ and bounded away from the poles (the arithmetic progressions $(\frac{\alpha}{2} + \frac{\eta}{2} + i s^- \eta)_{0 \leq i \leq M - s^- - 1}, (- \frac{\alpha}{2} - \frac{\theta}{2} - i t^+ \theta)_{0 \leq i \leq N - t^+ - 1}$ for $\zeta$ and $\omega$ respectively), we have that the integrand, multiplied by $\epsilon^{s-t-1}$, is bounded. The single contour integral of $K_d$ can be handled similarly. Dominated convergence allows us to take the limit inside the integral. It follows that $\epsilon^{s-t-1} K_d (s, k; t, \ell) \to K_{c}(s, x; t, y)$ uniformly for $x, y$ in compact sets as desired. Thus the process ${\bf X} = (\x^{(t)})_{-M+1 \leq t \leq N-1}$ is determinantal with kernel $K_c$ which is the $q \to 1-$ limit of the kernel $K_d$.
\end{proof}

\begin{rem}
    Another proof of this result, bypassing contour integrals altogether, is given in Appendix~\ref{sec:alt_c} Proposition~\ref{prop:K_c_alt_2}. The idea is that both kernels $K_d$ and $K_c$ are \emph{finite} sums of explicit simple residues, and then the limit $q \to 1-$ is immediate.
\end{rem}

\begin{proof}[Proof of Theorem~\ref{thm:he_kernel}]
    Notice we just need to handle the double contour integral part of $K_c$ in the limit $M, N\to \infty$; the single integral part (for $s > t$) is unchanged in the limit (the integrand is independent of $M, N$).

    Stirling's approximation of the $\Gamma$ function implies that $\Gamma(z+a)/\Gamma(z+b) \approx z^{a-b}, z \to \infty$ uniformly in $a, b$ over compact sets. Using the fact that $(x)_n = \Gamma(x+n)/\Gamma(x)$, let us write the $M, N$ dependent part of the ratio $F_c(s, \zeta)/F_c(t, \omega)$ and its asymptotics. The part that is $M, N$-independent becomes part of $K_{he}$. To wit, the announced dependence is
    \begin{equation}
        \begin{split}
            M \text{ dependence } &= \frac{\Gamma \left(\frac{\alpha}{2 \eta} - \frac{\omega}{\eta} + \frac{1}{2} + M\right)} {\Gamma\left(\frac{\alpha}{2 \eta} - \frac{\zeta}{\eta} + \frac{1}{2} + M\right)} \approx M^{\frac{1}{\eta} (\zeta-\omega)}, \quad M \to \infty, \\
            N \text{ dependence } &= \frac{\Gamma\left(\frac{\alpha}{2 \theta} + \frac{\zeta}{\theta} + \frac{1}{2} + N\right)}{\Gamma \left(\frac{\alpha}{2 \theta} + \frac{\omega}{\theta} + \frac{1}{2} + N \right)}  \approx N^{\frac{1}{\theta} (\zeta-\omega)}, \quad N \to \infty
        \end{split}
    \end{equation}
and we notice these powers are canceled after changing $(x, y) \mapsto \left( M^{-\frac{1}{\eta}} N^{-\frac{1}{\theta}}x, M^{-\frac{1}{\eta}} N^{-\frac{1}{\theta}} y\right)$ inside the $x^\zeta/y^{\omega}$ contribution. The overall prefactor $1/\sqrt{xy}$ contributes the overall scaling of $K_c$ by $M^{-\frac{1}{\eta}} N^{-\frac{1}{\theta}}$. To be able to interchange the limit with the (double) integral, we note that using the Hankel contours of Figures~\ref{fig:K_c_cont} and~\ref{fig:K_he_cont} with $(\zeta = \tau \pm i\epsilon, \omega = -\tau \pm i\epsilon), \ \tau \to \infty$ (and $\epsilon$ close to 0), we have exponential decay in $\zeta$ and $\omega$ (from $\tau$) from the $x^\zeta/y^\omega$ factor which kills any possible power (of $\zeta, \omega$) blow-up from the ratios of $\Gamma$ functions. This concludes the proof.
% \TODO{rephrase maybe}
\end{proof}

\section{Random sampling and proofs of Theorems~\ref{thm:disc_lpp} and~\ref{thm:cont_lpp}} \label{sec:lpp_proofs}

Before we prove Theorems~\ref{thm:disc_lpp} and~\ref{thm:cont_lpp}, we discuss exact random sampling of: the discrete-space process $(l^{(t)})_{-M+1 \leq t \leq N-1}$ via the associated plane partition model; the associated plane partitions when $M=N=\infty$ (equivalently, the process $(\lambda^{(t)})_{t \in \Z}$); and the finite $M, N$ continuous-space process $(\x^{(t)})_{-M+1 \leq t \leq N-1}$. One reason is we find the algorithms simple to explain and intrinsically interesting. The second is that they are intimately related to the last passage percolation results we are striving to prove.

\subsection{Random sampling} \label{sec:sampling}

\subsubsection{The finite $M, N$ discrete case} \label{sec:disc_fin}

In what follows we explain how to sample plane partitions $\Lambda$ with $M \times N$ constrained base and distributed according to~\eqref{eq:pp_measure}. Equivalently, we explain how to sample the discrete Muttalib--Borodin process (MBP) $(l^{(t)})_{-M+1 \leq t \leq N-1}$. 

The randomness we start with is an integer matrix (rectangular grid) $(\omega_{I, J}^{\rm geo})_{1 \leq I \leq M, 1 \leq J \leq N}$ of independent random variables 
\begin{equation}
    \omega_{I, J}^{\rm geo} \sim {\rm Geom} (a Q^{M-I+\frac{1}{2}} \tilde{Q}^{N-J+\frac{1}{2}}).
\end{equation}
It is convenient to place them in the $(I, J)$ plane at positions $(I-1/2, J-1/2)$, as eventually we want to connect our procedure to last passage percolation results. Notice here that we have changed from the $(i, j)$ coordinates of Figures~\ref{fig:disc_lpp} and~\ref{fig:cont_lpp} (where the origin is top-right at $(i, j) = (1, 1)$) to $(I, J)$ (with origin bottom-left at $(I, J) = (0, 0)$). See Figure~\ref{fig:corner_growth} where the discrete setting is explained on the left.

\begin{figure}[!h]
    \begin{center}
        \includegraphics[scale=0.6]{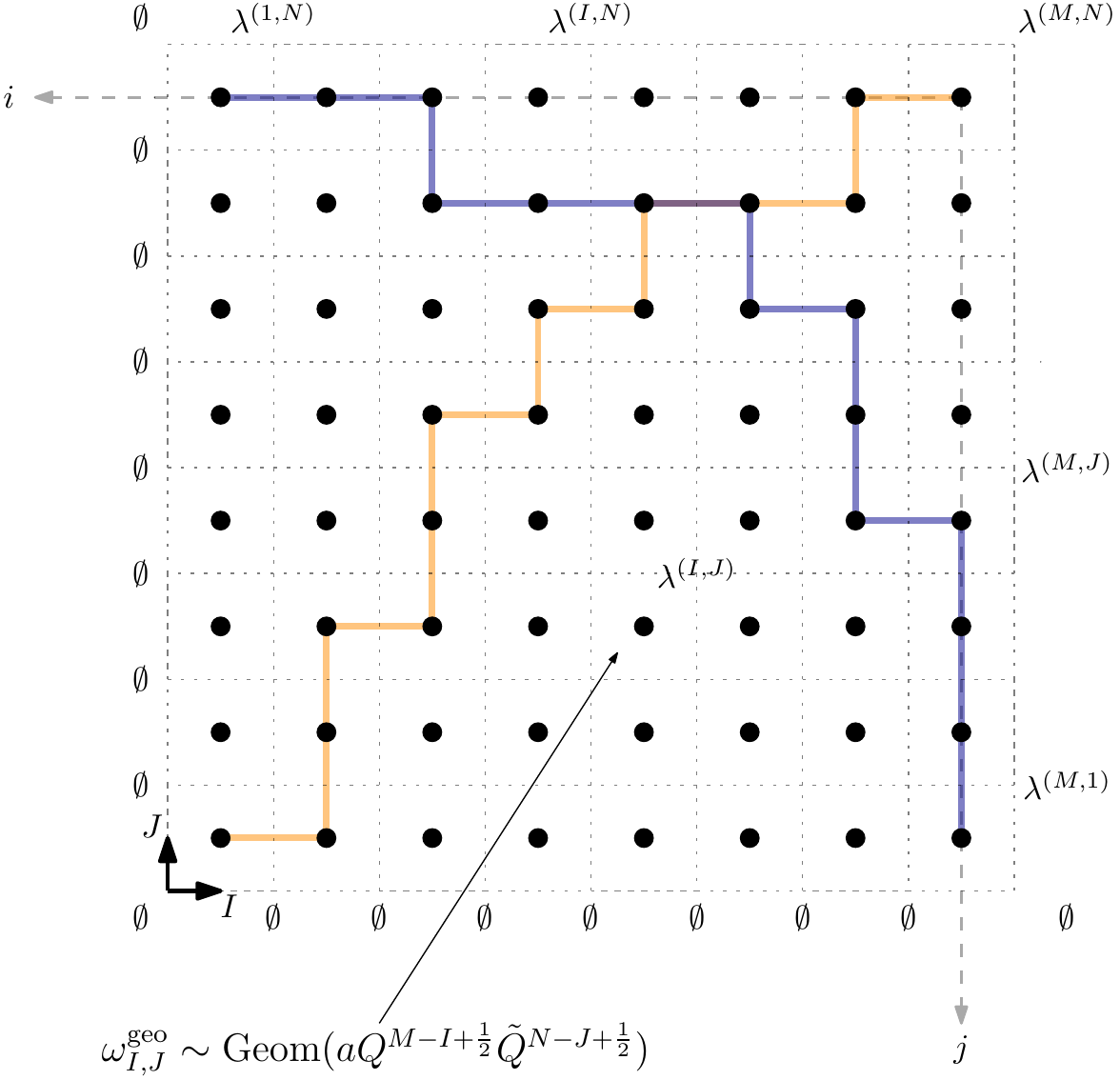} \quad \includegraphics[scale=0.6]{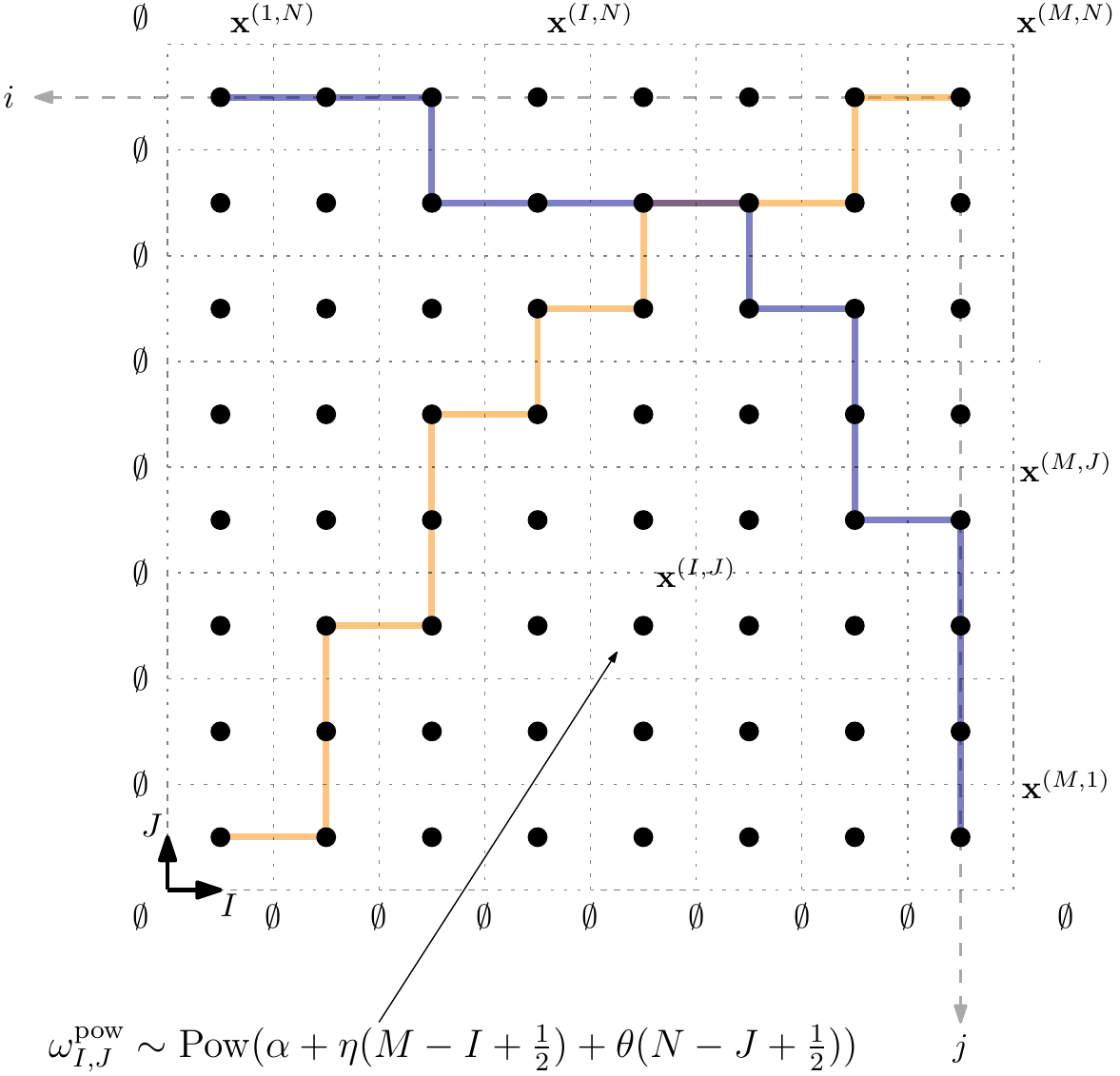}
    \end{center}
    \caption{The discrete finite $M, N$ field of random integers (left) and the continuous one (right). Dots represent random integers (left) and real numbers (right), independent of each other and distributed as indicated. Two possible paths/last passage times (over all that appear in the maximization/minimization of equations~\ref{eq:L_geo_fin} for left and~\ref{eq:L_pow_fin} for right respectively) are indicated in color: orange for the $L_1$'s and blue for the $L_2$'s. We also indicate the Fomin growth sampling algorithm schematically: we start with empty partitions (left) and vectors (right) on the bottom and left boundaries, and ``flip boxes'' using the local rules below constructing a fourth partition/vector at the North--East node based on the other three nodes and number in the middle and using the local rules described in this and the next sections. The output of the algorithm is then read on the top and right boundary, as an interlacing sequence of partitions/vectors.}
    \label{fig:corner_growth}
\end{figure}

% \begin{figure}[!h]
%     \begin{center}
%         \includegraphics[scale=0.5]{lpp_geo_mbe_corner.pdf}
%     \end{center}
%     \caption{\TODO{...}}
%     \label{fig:geo_corner}
% \end{figure}

% \begin{figure}[!h]
%     \begin{center}
%         \includegraphics[scale=0.5]{lpp_pow_mbe_corner.pdf}
%     \end{center}
%     \caption{\TODO{...}}
%     \label{fig:pow_corner}
% \end{figure}

We thus start our sampling procedure with the (random) non-negative integers $(\omega_{I,J}^{\rm geo})_{1 \leq I \leq M, 1 \leq J \leq N}$ sitting at the half-integer points $\left( I-1/2, J-1/2 \right)$ of $(\mathbb{N}+1/2)^2$ inside the rectangle $R:=[0, M] \times [0, N]$---see Figure~\ref{fig:corner_growth}. Random sampling takes place inductively, and we give three methods, each depending on the application of a particular ``local rule''. We next choose a local rule/algorithm 
\begin{equation} \label{eq:sampling_rules_disc}
    \mathcal{R}^{\rm geo} \in \{ rowRSK^{\rm geo}, colRSK^{\rm geo}, pushBlock^{\rm geo} \}
\end{equation} 
(to be defined below) which we apply inductively on the matrix $(\omega_{I,J}^{\rm geo})$. This rule is fixed once and for all throughout the whole sampling process.

On the integer points inside $R = [0, M] \times [0, N]$ we put the empty partition on the axes ($I = 0$ or $J = 0$). We then construct and place partitions $\lambda^{(I,J)}$ on the other lattice points---starting with $(1,1)$. Each $\lambda^{(I,J)}$ is defined inductively as the output of successive applications of the rule $\mathcal{R}^{\rm geo}$:
\begin{equation}
    \lambda^{(I,J)} = \begin{dcases}
        rRSK^{\rm geo} (\lambda^{(I-1,J)}, \lambda^{(I,J-1)}; \lambda^{(I-1,J-1)}; \omega_{I, J}^{\rm geo}), & \text{ if } \mathcal{R}^{\rm geo} = rowRSK^{\rm geo}, \\
        cRSK^{\rm geo} (\lambda^{(I-1,J)}, \lambda^{(I,J-1)}; \lambda^{(I-1,J-1)}; \omega_{I, J}^{\rm geo}), & \text{ if } \mathcal{R}^{\rm geo} = colRSK^{\rm geo}, \\
        pBlock^{\rm geo} (\lambda^{(I-1,J)}, \lambda^{(I,J-1)}; \omega_{I, J}^{\rm geo}; a Q^{M-I+\frac{1}{2}} \tilde{Q}^{N-J+\frac{1}{2}} ), & \text{ if } \mathcal{R}^{\rm geo} = pushBlock^{\rm geo}
    \end{dcases}
\end{equation}

The announced three local rules/algorithms giving rise to the three different ways of random sampling are given in the three displays below: \\ \ \\
\begin{tabular}{|l|l|}
    \hline
\begin{tabular}{@{}l@{}}
\noindent \textbf{Algorithm} $rowRSK^{\rm geo}$\\ \ \\
\noindent \textbf{Function} $rRSK^{\rm geo} (\alpha, \beta; \kappa; G)$\\ \ \\
\textbf{Input:} $\alpha, \beta; \kappa, G$ satisfying $\alpha \succ \kappa \prec \beta$\\
$\ell - 1 = \min(\ell(\alpha), \ell(\beta))$\\
$\nu_1 = \max(\alpha_1, \beta_1) + G$\\
\textbf{FOR} $s = 2, 3, \dots, \ell$\\
\indent  $\nu_s = \max(\alpha_s, \beta_s) + \min(\alpha_{s-1}, \beta_{s-1}) - \kappa_{s-1}$\\
\textbf{ENDFOR}\\
\textbf{Output:} $\nu$ satisfying $\alpha \prec \nu \succ \beta$\\
\ \\
\
\end{tabular}
& 
\begin{tabular}{@{}l@{}}
\noindent \textbf{Algorithm} $colRSK^{\rm geo}$\\ \ \\
\noindent \textbf{Function} $cRSK^{\rm geo} (\alpha, \beta; \kappa; G)$\\ \ \\
\textbf{Input:} $\alpha, \beta; \kappa, G$ satisfying $\alpha \succ \kappa \prec \beta$ \\
$\ell - 1 = \min(\ell(\alpha), \ell(\beta))$ \\
$G_{\ell} = G$ \\
\textbf{FOR} $s = \ell, \ell - 1, \dots, 1$  \\
\indent  $\nu_s = \min(\max(\alpha_s, \beta_s) + G_s, \kappa_{s-1})$ \\
\indent  $G_{s-1} = G_s - \min(G_s, \kappa_{s-1} - \max(\alpha_s, \beta_s))$ \\
\indent \indent \indent \indent $+ \min(\alpha_{s-1}, \beta_{s-1}) - \kappa_{s-1}$ \\
\textbf{ENDFOR} \\
\textbf{Output:} $\nu$ satisfying $\alpha \prec \nu \succ \beta$
\end{tabular}\\
    \hline
\end{tabular}
\\ \ \\
and finally 
\\ \ \\
\begin{tabular}{|l|}
    \hline
\begin{tabular}{@{}l@{}}
    \noindent \textbf{Algorithm} $pushBlock^{\rm geo}$\\ \ \\
    \noindent \textbf{Function} $pBlock^{\rm geo} (\alpha, \beta; G; p)$\\ \ \\
    \textbf{Input:} $\alpha, \beta$ \\
    $\ell - 1 = \min(\ell(\alpha), \ell(\beta))$\\
    % $ q_{IJ} = a Q^{M-I+\frac{1}{2}} \tilde{Q}^{N-J+\frac{1}{2}} $\\
    $\nu_1 = \max(\alpha_1, \beta_1) + G $\\
    \textbf{FOR} $s = 2, 3, \dots, \ell$\\
    \indent  $\nu_s = \max(\alpha_s, \beta_s) + {\rm Geom}_{ \min(\alpha_{s-1}, \beta_{s-1}) - \max(\alpha_s, \beta_s) } ( p )$\\
    \textbf{ENDFOR}\\
    \textbf{Output:} $\nu$ satisfying $\alpha \prec \nu \succ \beta$\\
\end{tabular}\\
\hline
\end{tabular}
\\ \ \\
\noindent (where in the last display we call $X \in \{ 0, 1, \dots, n \}$ a \emph{truncated geometric random variable} ${\rm Geom}_n(q)$  if $\P(X=k) = \frac{(1-q)q^k}{1-q^{n+1}} $). 

The whole algorithm is simple to explain in words: once $\mathcal{R}^{\rm geo}$ is chosen, we sample $\lambda^{(I, J)}$ as soon as $\lambda^{(I-1, J)}$ and $\lambda^{(I, J-1)}$ (and possibly $\lambda^{(I-1, J-1)}$) become available. The boundary conditions make the initial step possible. The order of the individual steps is not important. For the $rowRSK^{\rm geo}$ and $colRSK^{\rm geo}$ rules, the sole randomness is in $\omega_{I, J}^{\rm geo}$. For $pushBlock^{\rm geo}$ there is more randomness in each step. Note that $\ell(\lambda^{(I, J)}) \leq \min(I, J)$ by construction.

The output of the algorithm is a random sequence of $M+N+1$ interlacing partitions
\begin{equation} \label{eq:interlacing_output}
    \begin{split}
        \Lambda &= \emptyset \prec \lambda^{(1, N)} \prec \dots \prec \lambda^{(M-1, N)} \prec \lambda^{(M, N)} \succ \lambda^{(M, N-1)} \succ \dots \succ \lambda^{(M, 1)} \succ \emptyset \\
        &= \emptyset \prec \lambda^{(-M+1)} \prec \dots \prec \lambda^{(-1)} \prec \lambda^{(0)} \succ \lambda^{(1)} \succ \dots \succ \lambda^{(N-1)} \succ \emptyset
    \end{split}
\end{equation}
where the second line is just a rewriting of the first to make the notation correspond to~\eqref{eq:disc_interlacing}.

\begin{rem} \label{rem:disc_bij}
    Local rules $rowRSK^{\rm geo}$ and $colRSK^{\rm geo}$ are bijections. In fact they are Fomin growth diagram~\cite{fom86, fom95} reinterpretations of the classical row insertion Robinson--Schensted--Knuth correspondence~\cite{knu70} and column insertion Burge correspondence~\cite{bur74} respectively\footnote{The local rule $colRSK^{\rm geo}$ is called ``Burge'' in~\cite{bcgr20, boz20}. In~\cite{bcgr20} it is used to obtain a result of similar flavor to ours, that certain diagonal last passage times are the same in distribution as certain anti-diagonal ones---see Theorem~\ref{thm:greene_disc_fin}.}. More precisely, fixing partitions $\alpha$ and $\beta$, both rules provide bijections between the two sets 
    \begin{equation}%\label{eq:lc1}
        \{\kappa \text{ a partition}: \alpha \succ \kappa \prec \beta \} \times \{G: G \in \mathbb{N} \} \longleftrightarrow \{\nu \text{ a partition}: \alpha \prec \nu \succ \beta \}
    \end{equation}
    satisfying the condition $|\kappa| + |\nu| = |\alpha| + |\beta| + G$. There are many other bijections satisfying the same conditions, but these two also satisfy Greene-type theorems of interest in the sequel. Finally, $pushBlock^{\rm geo}$, while not a bijection per se, can be seen as a randomized bijection. See~\cite{pm17, bp16} for more examples and for a short explanation of the terminology. 
\end{rem}

The following theorem states that the above procedure samples exactly from the distribution we want. Let us attribute it properly. The $pushBlock^{\rm geo}$ sampling and dynamics originally appears in Borodin's work~\cite{bor11}. For $rowRSK^{\rm geo}$ one can check~\cite{bbbccv18} and references therein. For $colRSK^{\rm geo}$ see the Appendix of~\cite{b18} and references therein. For $rowRSK^{\rm geo}$  the result is also in~\cite{fr05}. 

\begin{thm}
    For any of the three sampling local rules $\mathcal{R}^{\rm geo}$ of~\eqref{eq:sampling_rules_disc}, the output of the algorithm as rewritten in the second line of~\eqref{eq:interlacing_output} is a random plane partition with base inside an $M \times N$ rectangle distributed according to~\eqref{eq:pp_measure}. Equivalently, the algorithm exactly samples the associated discrete MBP (point process) $(l^{(t)})_{-M+1 \leq t \leq N-1}$ of Section~\ref{sec:pp}. 
\end{thm}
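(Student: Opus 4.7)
The plan is to proceed by induction on the lattice, verifying that each local rule correctly propagates the Schur-process marginals of~\eqref{eq:schur_proc}. The boundary condition---empty partitions along the south and west edges---matches the boundary $\lambda^{(-M)} = \lambda^{(N)} = \emptyset$ of~\eqref{eq:disc_interlacing}, so after a full sweep it suffices to show that the joint law of the northeast-boundary sequence~\eqref{eq:interlacing_output} equals~\eqref{eq:schur_proc}.

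The underlying algebraic identity in all three cases is the single-variable skew Cauchy identity
\begin{equation}
    \sum_\nu s_{\nu/\alpha}(x) s_{\nu/\beta}(y) = \frac{1}{1-xy} \sum_\kappa s_{\alpha/\kappa}(y) s_{\beta/\kappa}(x).
\end{equation}
Substituting $x = \sqrt{a}\,Q^{M-I+1/2}$, $y = \sqrt{a}\,\tilde Q^{N-J+1/2}$, and using $s_{\lambda/\mu}(u) = u^{|\lambda|-|\mu|}\Id_{\mu\prec\lambda}$, this turns into a local identity relating the four partitions $(\kappa,\alpha,\beta,\nu)$ around a $2\times 2$ plaquette together with a geometric integer of parameter $xy = aQ^{M-I+1/2}\tilde Q^{N-J+1/2}$ at its center; the power of $xy$ must equal $|\nu|+|\kappa|-|\alpha|-|\beta|$.

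For the bijective rules $rowRSK^{\rm geo}$ and $colRSK^{\rm geo}$, I would verify by direct inspection of the recursions in the pseudocode that each local rule implements a weight-preserving bijection
\begin{equation}
    \{(\kappa,G) : \alpha \succ \kappa \prec \beta,\ G \in \N\} \longleftrightarrow \{\nu : \alpha \prec \nu \succ \beta\}
\end{equation}
with $|\nu|-|\alpha|-|\beta|+|\kappa| = G$. This is the classical content of Fomin's growth diagrams~\cite{fom86,fom95} with $rowRSK^{\rm geo}$ encoding Knuth's row bumping~\cite{knu70} and $colRSK^{\rm geo}$ encoding Burge's column insertion~\cite{bur74}; the same local step appears in~\cite{fr05, bbbccv18, b18, bcgr20}. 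Composing these local bijections across the grid produces a weight-preserving bijection between $M\times N$ matrices of non-negative integers and NE-boundary interlacing sequences, which, combined with the full Cauchy product $\prod_{I,J}(1-x_Iy_J)^{-1}$ and the product geometric normalizations, pushes the matrix law onto precisely~\eqref{eq:schur_proc}.

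For the randomized rule $pushBlock^{\rm geo}$ there is no bijection, and I would instead verify inductively that one local step preserves the correct conditional Schur-process law: assuming the triple $(\lambda^{(I-1,J)}, \lambda^{(I,J-1)}, \lambda^{(I-1,J-1)})$ already has the right joint distribution, the conditional law of $\lambda^{(I,J)} = \nu$ given $\alpha,\beta$ should be proportional to $s_{\nu/\alpha}(x)s_{\nu/\beta}(y)$ normalized over admissible $\nu$, i.e., a product of $(xy)^{|\nu|-|\alpha|-|\beta|+|\kappa|}$ summed against the $\kappa$-free sum on the left of the Cauchy identity. A direct computation using the explicit single-variable skew-Schur formula and the elementary identity $\sum_{k=0}^{n}(1-p)p^k/(1-p^{n+1}) = 1$ for the truncated geometric shows that the recipe ``$\nu_1 = \max(\alpha_1,\beta_1) + G$ with $G\sim\mathrm{Geom}(p)$ and $\nu_s = \max(\alpha_s,\beta_s) + \mathrm{Geom}_{\min(\alpha_{s-1},\beta_{s-1})-\max(\alpha_s,\beta_s)}(p)$ for $s\geq 2$'' reproduces exactly this kernel; this is the local step of Borodin's push-block dynamics~\cite{bor11}. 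The main obstacle I anticipate is precisely this last verification, since matching the conditional density under the Schur process to the truncated-geometric recipe row by row requires careful tracking of the interlacing constraints and of the telescoping of the normalizing constants; for the two bijective rules the analogous work is purely combinatorial and reduces to checking the inverses of the recursions satisfy the right weight identity.
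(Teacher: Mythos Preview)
The paper does not actually prove this theorem. Immediately before stating it, the authors write: ``Let us attribute it properly,'' and then simply cite the relevant literature---Borodin~\cite{bor11} for $pushBlock^{\rm geo}$, \cite{bbbccv18} and \cite{fr05} for $rowRSK^{\rm geo}$, and the Appendix of~\cite{b18} for $colRSK^{\rm geo}$. No argument is given in the paper itself.

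Your proposal is essentially the standard proof found in those references. The single-variable skew Cauchy identity you wrote is exactly the local algebraic input; the weight-preserving bijection property $|\nu|+|\kappa|-|\alpha|-|\beta|=G$ for the two deterministic rules is precisely what the paper records in Remark~\ref{rem:disc_bij} (again without proof, attributed to Fomin, Knuth, and Burge); and the conditional-law check for $pushBlock^{\rm geo}$ is the content of~\cite{bor11}. So your sketch is correct and is not a different route---it \emph{is} the route, just one the paper chose to outsource. The ``main obstacle'' you flag for the push-block case is real but routine: the conditional law of $\nu$ given $(\alpha,\beta)$ under the Schur process factors over the parts because the single-variable skew Schur functions are products of indicators times a single power, and the interlacing constraints decouple the parts into independent intervals, which is exactly what makes the truncated-geometric recipe work.
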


\begin{rem}
    Assuming $O(1)$ complexity for sampling the necessary geometric random variables, each of the three algorithms takes $O(M^2 N)$ operations.
\end{rem}

Let us now record the connection to directed last passage percolation. Consider the following two random variables: 
\begin{equation} \label{eq:L_geo_fin}
    \begin{split}
        L_1^{\rm geo} = \max\limits_{\pi} \left[ \sum\limits_{(I-\frac{1}{2}, J-\frac{1}{2}) \in \pi} \omega_{I,J}^{\rm geo} \right], \quad L_2^{\rm geo} = \max\limits_{\varpi} \left[ \sum\limits_{(I-\frac{1}{2}, J-\frac{1}{2}) \in \varpi} \omega_{I,J}^{\rm geo} \right] 
    \end{split}
\end{equation}
where the sum in $L_1$ is taken over all up-right lattice paths $\pi$ from $(1/2, 1/2)$ to $(M-1/2, N-1/2)$; and the sum in $L_2$ is taken over all down-right lattice paths $\varpi$ from $(1/2, N-1/2)$ to $(M-1/2, 1/2)$.

The connection to last passage percolation is contained in the following result. See the Appendix of~\cite{b18} for some history. The theorem is attributed to Greene~\cite{gre74} for $rowRSK^{\rm geo}$ (and by extension for $pushBlock^{\rm geo}$); and to Krattenthaler~\cite{kra06} for $colRSK^{\rm geo}$. 

\begin{thm}[Greene--Krattenthaler theorem, finite discrete setting] \label{thm:greene_disc_fin}
    Using either of the algorithms/local rules $rowRSK^{\rm geo}$ or $pushBlock^{\rm geo}$ for sampling, we have $\lambda^{(0)}_1 = L_1^{\rm geo}$. Using $colRSK^{\rm geo}$ as local rule, we have $\lambda^{(0)}_1 = L_2^{\rm geo}$. In particular $L_1^{\rm geo} = L_2^{\rm geo}$ in distribution. 
\end{thm}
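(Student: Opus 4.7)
The plan is to establish the identity $\lambda^{(I,J)}_1 = L^{(I,J)}$ by induction over the nodes $(I,J)$ of the grid, where $L^{(I,J)}$ denotes the appropriate last passage time restricted to the subrectangle $[0,I]\times [0,J]$: namely the restriction of $L_1^{\rm geo}$ when sampling with $rowRSK^{\rm geo}$ or $pushBlock^{\rm geo}$, and of $L_2^{\rm geo}$ when sampling with $colRSK^{\rm geo}$. The base case $\lambda^{(I,J)} = \emptyset$ along the two axes ($I=0$ or $J=0$) is immediate since no valid path exists in a degenerate subrectangle. Taking $(I,J)=(M,N)$ at the end of the induction then recovers $\lambda^{(0)}_1 = L_1^{\rm geo}$ or $L_2^{\rm geo}$ as desired.

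For the two rules $rowRSK^{\rm geo}$ and $pushBlock^{\rm geo}$ the inductive step is transparent: both specify $\nu_1 = \max(\alpha_1,\beta_1) + G$ with input $(\alpha,\beta,G) = (\lambda^{(I-1,J)}, \lambda^{(I,J-1)}, \omega_{I,J}^{\rm geo})$. Plugging in the induction hypothesis, this is precisely the up-right LPP recurrence
\begin{equation*}
    L_1^{(I,J)} = \max\bigl(L_1^{(I-1,J)},\, L_1^{(I,J-1)}\bigr) + \omega_{I,J}^{\rm geo},
\end{equation*}
so the induction closes in one line. Note that for $pushBlock^{\rm geo}$ only $\nu_1$ is relevant here; the extra randomness entering $\nu_s$ for $s\geq 2$ does not affect the first part.

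The $colRSK^{\rm geo}$ case is the main obstacle. The inner loop runs \emph{backwards} through the parts and the first part is produced as $\nu_1 = \max(\alpha_1,\beta_1) + G_1$ with $G_1$ obtained by a cascade of ``pushes'' coming from smaller parts; in general $G_1 \neq G$, so the naive $L_1$ recurrence fails. The cleanest route is to invoke the Greene-type theorem of Krattenthaler~\cite{kra06} for the Burge correspondence, whose Fomin growth-diagram formulation states precisely that the first part of the output coincides with the maximum weight of a down-right path on the subrectangle, i.e.\ $L_2^{(I,J)}$. A self-contained verification would instead unwind the telescoping $G_\ell, G_{\ell-1}, \ldots, G_1$ and check that the resulting $\nu_1$ realizes the down-right LPP optimum: this reduces to a case analysis tracking how the excess $\max(\alpha_s,\beta_s)+G_s - \kappa_{s-1}$ propagates upward through the cascade, and this is where the genuine work of the argument lies.

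Finally, the in-distribution equality $L_1^{\rm geo} \stackrel{d}{=} L_2^{\rm geo}$ is a free consequence: by the exactness of the three sampling procedures stated in the theorem preceding Theorem~\ref{thm:greene_disc_fin}, the marginal law of $\lambda^{(0)}$ under the Schur process~\eqref{eq:pp_measure} is the same regardless of which local rule is used, so $\lambda^{(0)}_1$ has one unambiguous distribution and both LPP times must agree with it.
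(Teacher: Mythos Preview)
Your proposal is sound and in fact goes further than the paper, which does not prove Theorem~\ref{thm:greene_disc_fin} at all: it merely attributes the $rowRSK^{\rm geo}$ (and by extension $pushBlock^{\rm geo}$) case to Greene and the $colRSK^{\rm geo}$ case to Krattenthaler, with a pointer to the appendix of~\cite{b18} for history. Your inductive argument for $rowRSK^{\rm geo}$ and $pushBlock^{\rm geo}$ via the one-line recurrence $\nu_1=\max(\alpha_1,\beta_1)+G$ is the standard and correct proof, and your derivation of $L_1^{\rm geo}\stackrel{d}{=}L_2^{\rm geo}$ from the sampling theorem is exactly right.

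One comment on the $colRSK^{\rm geo}$ sketch: your primary route of invoking Krattenthaler is fine and matches the paper. But the alternative ``self-contained verification'' you outline would not close under the induction hypothesis you set up, because for $colRSK^{\rm geo}$ the output $\nu_1$ depends on \emph{all} parts of $\alpha,\beta,\kappa$, not just their first parts. Knowing only $\alpha_1=L_2^{(I-1,J)}$, $\beta_1=L_2^{(I,J-1)}$ is insufficient to pin down $\nu_1$; a genuine self-contained induction would need the full Greene invariants (sums of the top $k$ parts equal maxima over $k$ non-crossing down-right paths) at every node. This is a strengthening you would have to carry through the argument, and it is precisely the content of Krattenthaler's theorem rather than a simple unwinding of the $G_s$ cascade.
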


\subsubsection{The $M = N = \infty$ discrete case} \label{sec:disc_inf}

It is possible to sample plane partitions $\Lambda$ distributed according to~\eqref{eq:pp_measure} but with unconstrained base. In other words, it is possible to sample such plane partitions when $M = N = \infty$. The idea is that given weights on the $(i,j)_{i,j\geq 1}$ lattice which are ${\rm Geom} (a Q^{i - \frac{1}{2}} \tilde{Q}^{j - \frac{1}{2}})$---notice we are using the $(i,j)$ coordinates of Figure~\ref{fig:disc_lpp}, only finitely many of them will be non-zero by Borel--Cantelli. We then choose $M, N$ big enough so that outside an $M\times N$ rectangle these weights are almost surely zero. We switch coordinates to $(I, J)$ (and the origin to the lower-left corner as in Figure~\ref{fig:corner_growth}) and apply any of the three algorithms described in the previous section. How big $M$ and $N$ need to be given explicit $a, Q, \tilde{Q}$ is discussed in Section 5 of~\cite{bbbccv18} where the authors also sketch the algorithm itself using $rowRSK^{\rm geo}$ (but any of the other two local rules can be used); the $a_i$ and $b_j$ \emph{Schur parameters} in op.~cit.~ are equal to $(a_i = \sqrt{a} Q^{i-1/2}, b_j = \sqrt{a} \tilde{Q}^{j-1/2})_{i, j \geq 1}$ in our case. Technically speaking, the sampling is not exact in this way, as an approximation for $M$ and $N$ has to be made. 

What is nonetheless important for us is that the above Greene--Kratthenthaler Theorem~\ref{thm:greene_disc_fin} still holds. Using the coordinates $(i, j)$ of Figure~\ref{fig:disc_lpp} and Section~\ref{sec:disc_lpp}, let 
\begin{equation}
    \begin{split}
        L_1^{\rm geo} = \max\limits_{\pi} \left[ \sum\limits_{(i, j) \in \pi} \omega_{i, j}^{\rm geo} \right], \quad L_2^{\rm geo} = \max\limits_{\varpi} \left[ \sum\limits_{(i, j) \in \varpi} \omega_{i, j}^{\rm geo} \right] 
    \end{split}
\end{equation}
where the sum in $L_1^{\rm geo}$ is taken over all down-left lattice paths $\pi$ from $(1, 1)$ to $(\infty, \infty)$; and the sum in $L_2^{\rm geo}$ is taken over all down-right lattice paths $\varpi$ from $(\infty, 1)$ to $(1, \infty)$.

We reformulate the analogue of Theorem~\ref{thm:greene_disc_fin} in a setting suitable for our needs. 

\begin{thm}[Greene--Krattenthaler theorem, infinite discrete setting] \label{thm:greene_disc_inf}
    We have $L_1 = L_2$ in distribution and both are equal to $\lambda_1$, the first part of a partition $\lambda$ distributed according to the Schur measure
    \begin{equation} \label{eq:schur_inf}
        \begin{split}
        \P(\lambda) &= Z^{-1} s_{\lambda} (\sqrt{a} Q^{1/2}, \sqrt{a} Q^{3/2}, \sqrt{a} Q^{5/2}, \dots) s_{\lambda} (\sqrt{a} \tilde{Q}^{1/2}, \sqrt{a} \tilde{Q}^{3/2}, \sqrt{a} \tilde{Q}^{5/2}, \dots) \\
        &= Z^{-1} \left[ a (Q \tilde{Q})^{1/2} \right]^{|\lambda|} s_{\lambda} (1, Q, Q^2, \dots) s_{\lambda} (1, \tilde{Q}, \tilde{Q}^{2}, \dots)
        \end{split}
    \end{equation}
    with $s_\lambda$ denoting Schur functions (now in infinitely many variables) and $Z = \prod_{i, j \geq 1} (1-a Q^{i-1/2} \tilde{Q}^{j-1/2})^{-1}$. 
\end{thm}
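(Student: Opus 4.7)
The plan is to bootstrap the infinite statement from the finite Greene--Krattenthaler Theorem~\ref{thm:greene_disc_fin} via an almost-sure truncation and a weak-convergence argument, identifying the limiting $\lambda^{(0)}_1$ marginal as the Schur measure~\eqref{eq:schur_inf}.

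First I would invoke Borel--Cantelli on the infinite $(i, j)$-field of Figure~\ref{fig:disc_lpp}: since $\sum_{i, j \ge 1} a Q^{i - 1/2}\tilde{Q}^{j - 1/2} < \infty$, almost surely only finitely many of the weights $\omega^{\rm geo}_{i, j}$ are non-zero, so there exist a.s.\ finite random thresholds $M^\star, N^\star$ with the field supported in $[1, M^\star] \times [1, N^\star]$. In particular $L^{\rm geo}_1, L^{\rm geo}_2$ are almost surely finite, and their values are unchanged if one restricts the underlying paths to $[1, M] \times [1, N]$ for any $M \ge M^\star, N \ge N^\star$. Under the coordinate change $I = M - i + 1$, $J = N - j + 1$, the infinite field restricted to $[1, M] \times [1, N]$ has the same joint distribution as the finite field of Section~\ref{sec:disc_fin}, because the parameters match exactly ($a Q^{M - I + 1/2}\tilde{Q}^{N - J + 1/2} = a Q^{i - 1/2}\tilde{Q}^{j - 1/2}$). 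Hence the infinite $L^{\rm geo}_k$ is the a.s.\ limit of the finite $L^{\rm geo}_k$ from~\eqref{eq:L_geo_fin} as $M, N \to \infty$.

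Next, applying the finite Greene--Krattenthaler Theorem~\ref{thm:greene_disc_fin}, the finite $L^{\rm geo}_1$ (sampled via $rowRSK^{\rm geo}$) and $L^{\rm geo}_2$ (sampled via $colRSK^{\rm geo}$) are equidistributed with $\lambda^{(0)}_1$ sampled under~\eqref{eq:pp_measure}. By the proof of Theorem~\ref{thm:disc_mb_dist} at $t = 0$ this common marginal is the Schur measure with truncated alphabets $(\sqrt{a} Q^{1/2}, \dots, \sqrt{a} Q^{M - 1/2})$ and $(\sqrt{a} \tilde{Q}^{1/2}, \dots, \sqrt{a} \tilde{Q}^{N - 1/2})$. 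As $M, N \to \infty$ this converges weakly to the Schur measure~\eqref{eq:schur_inf} by continuity of Schur polynomials in their absolutely summable argument, together with convergence of the partition function $\prod_{i \le M, j \le N}(1 - a Q^{i - 1/2}\tilde{Q}^{j - 1/2})^{-1}$ to $\prod_{i, j \ge 1}(1 - a Q^{i - 1/2}\tilde{Q}^{j - 1/2})^{-1}$ via the Cauchy identity. Combining the two weak limits yields $L^{\rm geo}_1 \stackrel{d}{=} L^{\rm geo}_2 \stackrel{d}{=} \lambda_1$ under~\eqref{eq:schur_inf}.

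The main subtle point I would treat carefully is the interchange of the a.s.\ truncation (with random thresholds $M^\star, N^\star$) and the limit $M, N \to \infty$ along deterministic sequences; this is handled by dominated convergence, since for each fixed $x \in \N$ the event $\{L^{\rm geo}_k \le x\}$ stabilizes pathwise once $M, N$ exceed $M^\star, N^\star$. Everything else is a routine Schur-measure computation based on the Cauchy identity and the homogeneity of Schur functions.
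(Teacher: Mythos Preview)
Your proposal is correct and follows essentially the same route as the paper. The paper does not give a formal proof of this theorem: it simply observes (in the paragraph preceding the statement) that by Borel--Cantelli only finitely many weights are non-zero, so one can choose $M,N$ large enough, switch to $(I,J)$ coordinates, and apply the finite Greene--Krattenthaler Theorem~\ref{thm:greene_disc_fin}; your write-up fleshes out exactly this sketch, and your added care about the weak convergence of the truncated Schur measures to~\eqref{eq:schur_inf} is a detail the paper leaves implicit.
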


\subsubsection{The finite $M, N$ continuous case} \label{sec:cont_fin}

In what follows we explain how to sample the MBP (point process) $(\x^{(t)})_{-M+1 \leq t \leq N-1}$ of Section~\ref{sec:cont_mbe}. Throughout we sample vectors $\x$ of finite length and increasing entries containing random numbers strictly between 0 and 1. It is convenient to denote by $\emptyset$ the empty vector containing no such numbers, which also conveniently can be thought of as the vector $\emptyset = (1,1,1,1,1,\dots)$ containing infinitely many 1's. 

The setting is similar to that of the discrete finite $M, N$ case, and all arguments there apply here as well with modifications we now explain.

The randomness we start with is a matrix (rectangular grid) $(\omega_{I, J}^{\rm geo})_{1 \leq I \leq M, 1 \leq J \leq N}$ of independent power random variables in $[0, 1]$
\begin{equation}
    \omega_{I, J}^{\rm pow} \sim {\rm Pow} \left(\alpha + \eta(M-I+\tfrac12) + \theta (N-J+\tfrac12)\right)
\end{equation}
which we conveniently place in the $(I, J)$ plane at positions $(I-1/2, J-1/2)$ as in Figure~\ref{fig:corner_growth} (right). Notice these are $q \to 1-$ limits of the aforementioned geometric random variables. To wit, if $\omega_{I, J}^{\rm geo} = {\rm Geom} (a Q^{M-I+\frac{1}{2}} \tilde{Q}^{N-J+\frac{1}{2}})$, then in the limit 
\begin{equation}
    q = e^{-\epsilon}, \quad a = e^{-\alpha \epsilon}, \quad \epsilon \to 0+
\end{equation}
with (recall) $Q = q^\eta, \tilde{Q}=q^\theta$ we have
\begin{equation}
    e^{ -\epsilon \omega_{I, J}^{\rm geo}} \to \omega_{I, J}^{\rm pow} \sim {\rm Pow} \left(\alpha + \eta(M-I+\tfrac12) + \theta (N-J+\tfrac12)\right).
\end{equation}

We proceed exactly as before. We start with the (random) real numbers $(\omega_{I,J}^{\rm geo})_{1 \leq I \leq M, 1 \leq J \leq N}$ sitting at the half-integer points $\left( I-1/2, J-1/2 \right)$ of $(\mathbb{N}+1/2)^2$ inside the rectangle $R:=[0, M] \times [0, N]$---see Figure~\ref{fig:corner_growth}. The sampling is inductive, we again give three methods based on choosing a local rule $\mathcal{R}^{\rm pow}$ out of three: 
\begin{equation} \label{eq:sampling_rules_cont}
    \mathcal{R}^{\rm pow} \in \{ rowRSK^{\rm pow}, colRSK^{\rm pow}, pushBlock^{\rm pow} \}.
\end{equation} 
Each individual rule is defined below.

On the integer points inside $R = [0, M] \times [0, N]$ we put the empty vector $\emptyset$ on the axes ($I = 0$ or $J = 0$). We then construct and place vectors $\x^{(I,J)}$ on the other lattice points---starting with $(1,1)$. Each $\x^{(I,J)}$ is defined inductively as the output of successive applications of the rule $\mathcal{R}^{\rm pow}$:
\begin{equation}
    \x^{(I,J)} = \begin{dcases}
        rRSK^{\rm pow} (\x^{(I-1,J)}, \x^{(I,J-1)}; \x^{(I-1,J-1)}; \omega_{I, J}^{\rm pow}), & \mathcal{R}^{\rm pow} = rowRSK^{\rm pow}, \\
        cRSK^{\rm pow} (\x^{(I-1,J)}, \x^{(I,J-1)}; \x^{(I-1,J-1)}; \omega_{I, J}^{\rm pow}), & \mathcal{R}^{\rm pow} = colRSK^{\rm pow}, \\
        pBlock^{\rm pow} (\x^{(I-1,J)}, \x^{(I,J-1)}; \omega_{I, J}^{\rm pow}; \alpha + \eta(M-I+\tfrac12) + \theta(N-J+\tfrac12) ), & \mathcal{R}^{\rm pow} = pushBlock^{\rm pow}
    \end{dcases}
\end{equation}

The announced three local rules/algorithms replacing the three discrete ones above are below: \\ \ \\
\begin{tabular}{|l|l|}
    \hline
\begin{tabular}{@{}l@{}}
\noindent \textbf{Algorithm} $rowRSK^{\rm pow}$\\ \ \\
\noindent \textbf{Function} $rRSK^{\rm pow} (\a, \b; \u; g)$\\ \ \\
\textbf{Input:} $\a, \b; \u, g$ satisfying $\a \succ \u \prec \b$\\
$\ell - 1 = \min(\ell(\a), \ell(\b))$\\
$\v_1 = g \min(\a_1, \b_1)$\\
\textbf{FOR} $s = 2, 3, \dots, \ell$\\
\indent  $\v_s = \frac {\min(\a_s, \b_s) \max(\a_{s-1}, \b_{s-1}) } { \u_{s-1} }$\\
\textbf{ENDFOR}\\
\textbf{Output:} $\v$ satisfying $\a \prec \v \succ \beta$\\
\
\end{tabular}
& 
\begin{tabular}{@{}l@{}}
\noindent \textbf{Algorithm} $colRSK^{\rm pow}$\\ \ \\
\noindent \textbf{Function} $cRSK^{\rm pow} (\a, \b; \u; g)$\\ \ \\
\textbf{Input:} $\a, \b; \u, g$ satisfying $\a \succ \u \prec \b$\\
$\ell - 1 = \min(\ell(\a), \ell(\b))$\\
$g_{\ell} = g$ \\
\textbf{FOR} $s = \ell, \ell - 1, \dots, 1$  \\
\indent  $\v_s = \max(g_s \min(\a_s, \b_s), \u_{s-1})$ \\
\indent  $g_{s-1} = \frac{ g_s \max(\a_{s-1}, \b_{s-1}) }  {\u_{s-1}  \max\left( g_s, \frac{\u_{s-1}} {\min(\a_s, \b_s)} \right)  }$ \\
\textbf{ENDFOR} \\
\textbf{Output:} $\v$ satisfying $\a \prec \v \succ \b$
\end{tabular}\\
    \hline
\end{tabular}
\\ \ \\
and finally 
\\ \ \\
\begin{tabular}{|l|}
    \hline
\begin{tabular}{@{}l@{}}
    \noindent \textbf{Algorithm} $pushBlock^{\rm pow}$\\ \ \\
    \noindent \textbf{Function} $pBlock^{\rm pow} (\a, \b; g; \gamma)$\\ \ \\
    \textbf{Input:} $\a, \b$ \\
    $\ell - 1 = \min(\ell(\alpha), \ell(\beta))$\\
    % $ q_{IJ} = a Q^{M-I+\frac{1}{2}} \tilde{Q}^{N-J+\frac{1}{2}} $\\
    $\v_1 = g \min(\a_1, \b_1)$\\
    \textbf{FOR} $s = 2, 3, \dots, \ell$\\
    \indent  $\v_s = \min(\a_s, \b_s)  {\rm Pow}_{ \frac{\max(\a_{s-1}, \b_{s-1})} {\min(\a_s, \b_s) } } ( \gamma )$\\
    \textbf{ENDFOR}\\
    \textbf{Output:} $\v$ satisfying $\a \prec \v \succ \b$\\
\end{tabular}\\
\hline
\end{tabular}
\\ \ \\
\noindent (where in the last display we call $Y \in [A, 1]$ a \emph{truncated power random variable} ${\rm Pow}_A( \gamma )$  if $\P(Y \in dy) = \Id_{[A, 1]} \frac{\gamma y^{\gamma-1}} {1-A^{\gamma}}   $). 

To summarize: once $\mathcal{R}^{\rm geo}$ is chosen, we sample $\x^{(I, J)}$ as soon as $\x^{(I-1, J)}$ and $\x^{(I, J-1)}$ (and possibly $\x^{(I-1, J-1)}$) become available. The boundary conditions make the initial step possible. The order of the individual steps is not important for the sampling procedure, but note that now there is a natural order if one thinks of the $\omega_{(I, J)}^{\rm pow}$'s as times. For the $rowRSK^{\rm pow}$ and $colRSK^{\rm pow}$ rules, the sole randomness is in $\omega_{I, J}^{\rm pow}$. For $pushBlock^{\rm pow}$ there is more randomness in each step. Note that $\ell(\x^{(I, J)}) = \min(I, J)$ by construction; unlike in the geometric case, we now have equality. This makes the algorithm even simpler to implement than the geometric one.

The output of the algorithm is a random sequence of $M+N+1$ interlacing vectors
\begin{equation} \label{eq:interlacing_output_cont}
    \begin{split}
        {\bf X} &= \emptyset \prec \x^{(1, N)} \prec \dots \prec \x^{(M-1, N)} \prec \x^{(M, N)} \succ \x^{(M, N-1)} \succ \dots \succ \x^{(M, 1)} \succ \emptyset \\
        &= \emptyset \prec \x^{(-M+1)} \prec \dots \prec \x^{(-1)} \prec \x^{(0)} \succ \x^{(1)} \succ \dots \succ \x^{(N-1)} \succ \emptyset
    \end{split}
\end{equation}
where the second line is just a rewriting of the first to make the notation correspond to~\eqref{eq:cont_interlacing_2}.

\begin{rem} \label{rem:cont_bij}
    Similarly to Remark~\ref{rem:disc_bij}, we have that local rules $rowRSK^{\rm pow}$ and $colRSK^{\rm pow}$ are bijections. This means a lot less from a combinatorial point of view since we are dealing with vectors of real numbers, but nonetheless we can make things explicit. Fixing vectors $\a$ and $\b$, both rules provide bijections between the following two sets of vectors (by \emph{vector} we implicitly mean a vector of finitely many increasing numbers strictly between 0 and 1 and padded with infinitely many 1's at the end)
    \begin{equation}%\label{eq:lc1}
        \{\u \text{ a vector}: \a \succ \u \prec \b \} \times \{g: g \in (0, 1) \} \longleftrightarrow \{\v \text{ a vector}: \a \prec \v \succ \b \}
    \end{equation}
    satisfying the condition $|\u| \cdot |\v| = |\a| \cdot |\b| \cdot g$ where we have denoted $|\x| = \prod_i x_i$ (this is well defined as neither component is zero and only finitely many are not equal to 1). As before, there are many other bijections satisfying the same conditions, but these two are of special interest to us as can be seen below in connection to last passage percolation models. Finally, $pushBlock^{\rm pow}$ can be seen as a randomized bijection. 
\end{rem}

The following theorem states that the above procedure samples exactly the process $(\x^{(t)})_t$ we are after. The $rowRSK^{\rm pow}$ dynamics is already discussed at length in~\cite{fr05}, except in ``logarithmic variables'', meaning in the variables $y_i^{(t)} = -\log x_i^{(t)}$.

\begin{thm} \label{thm:cont_sampling}
    For any of the three sampling local rules $\mathcal{R}^{\rm pow}$ of~\eqref{eq:sampling_rules_cont}, the output of the algorithm as rewritten in the second line of~\eqref{eq:interlacing_output_cont} is the process $(\x^{(t)})_{-M+1 \leq t \leq N-1}$ of Theorem~\ref{thm:cont_mb_dist}. 
\end{thm}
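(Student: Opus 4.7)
The plan is to derive Theorem~\ref{thm:cont_sampling} as the $q \to 1-$ limit of the discrete sampling theorem of Section~\ref{sec:disc_fin}. At each $\epsilon > 0$ we already know that starting from independent geometric inputs $\omega_{I,J}^{\rm geo} \sim {\rm Geom}(aQ^{M-I+1/2}\tilde{Q}^{N-J+1/2})$ and applying any of the three discrete local rules $\mathcal{R}^{\rm geo}$ produces a plane partition $\Lambda$ distributed by~\eqref{eq:pp_measure}. Under the scaling $q = e^{-\epsilon}$, $a = e^{-\alpha\epsilon}$, $\lambda^{(t)}_i = -\epsilon^{-1}\log x_i^{(t)}$, Theorem~\ref{thm:cont_mb_dist} tells us the process $(l^{(t)})$ weak-converges to the continuous MBP $(\x^{(t)})$. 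It therefore suffices to show that, under the same change of variables, the inputs and the local rules of the discrete algorithm converge to those of the continuous algorithm; a continuous-mapping argument then transports the discrete sampling result to the continuous one.

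The first step is the convergence of the inputs. A one-line CDF computation shows that if $p = aQ^{M-I+1/2}\tilde{Q}^{N-J+1/2} = e^{-\epsilon\gamma}$ with $\gamma = \alpha + \eta(M-I+\tfrac12) + \theta(N-J+\tfrac12)$, then $e^{-\epsilon \omega_{I,J}^{\rm geo}}$ converges in distribution to ${\rm Pow}(\gamma)$, which is precisely $\omega_{I,J}^{\rm pow}$. The truncated version of the same estimate, starting from $\P(X \leq k) = (1-p^{k+1})/(1-p^{n+1})$ with $n = \min(\alpha_{s-1},\beta_{s-1}) - \max(\alpha_s,\beta_s)$, yields the convergence of ${\rm Geom}_n(p)$ (after $e^{-\epsilon\cdot}$) to the truncated power variable ${\rm Pow}_A(\gamma)$ with $A = \max(\a_{s-1},\b_{s-1})/\min(\a_s,\b_s)$, matching the random draw prescribed by $pushBlock^{\rm pow}$.

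The second step is the convergence of the local rules. Writing $\alpha_i = -\epsilon^{-1}\log \a_i$ (and similarly for the other vectors) and $G = -\epsilon^{-1}\log g$, the duality $\max \circ(-\epsilon^{-1}\log) = (-\epsilon^{-1}\log)\circ\min$ converts $+\leftrightarrow\cdot$ and $-\leftrightarrow/$. Applying this substitution to $rowRSK^{\rm geo}$ turns $\nu_1 = \max(\alpha_1,\beta_1)+G$ into $\v_1 = g\min(\a_1,\b_1)$ and $\nu_s = \max(\alpha_s,\beta_s) + \min(\alpha_{s-1},\beta_{s-1}) - \kappa_{s-1}$ into $\v_s = \min(\a_s,\b_s)\max(\a_{s-1},\b_{s-1})/\u_{s-1}$, matching $rowRSK^{\rm pow}$ exactly; the column rule $colRSK^{\rm geo}$ converts analogously, including the carry $G_s \to g_s$ (whose additive update becomes multiplicative). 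The deterministic skeleton of $pushBlock^{\rm geo}$ transforms in the same way, and its random piece is handled by the input-convergence computation above.

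Finally, induction on the nodes $(I,J)$ of $R$ together with the continuous-mapping theorem propagates the weak convergence from inputs to outputs at each site, since each local rule is almost-surely continuous on the relevant domain of interlacing inputs. Combined with the known discrete sampling statement and Theorem~\ref{thm:cont_mb_dist}, this shows that the continuous algorithm samples $(\x^{(t)})_{-M+1 \leq t \leq N-1}$ exactly. The step requiring the most care is the randomized rule $pushBlock^{\rm pow}$: one must couple the truncated-geometric draws at each node with their truncated-power limits while also letting the upstream vectors (which set the truncation level) converge. Topologically sorting the lattice and applying a conditional version of the continuous-mapping theorem node by node resolves this cleanly, which I expect to be the main (but entirely routine) technical obstacle.
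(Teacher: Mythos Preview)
Your proposal is correct and follows essentially the same route as the paper: take the $q\to 1-$ limit of the discrete sampling theorem, noting that (i) $e^{-\epsilon\,\omega^{\rm geo}}\to\omega^{\rm pow}$ (and the truncated analogue for $pushBlock$), (ii) under the substitution $\lambda_i=-\epsilon^{-1}\log x_i$ each discrete local rule becomes, line by line, the corresponding continuous rule via $\max\leftrightarrow\min$ and $+\leftrightarrow\cdot$, and (iii) Theorem~\ref{thm:cont_mb_dist} identifies the limiting output. Your explicit invocation of the continuous-mapping theorem and the node-by-node induction is a touch more careful than the paper's summary paragraph, but the argument is the same.
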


\begin{proof}
    The proof relies on the simple observation that the limit $q \to 1-$ of the sampling algorithm described in the finite $M, N$ discrete case above makes sense. Let us recall that in the limit
    \begin{equation}
        q = e^{-\epsilon}, \quad a = e^{-\alpha \epsilon}, \quad \lambda^{(t)}_i = -\epsilon^{-1} \log x_i^{(t)}, \quad \epsilon \to 0+
    \end{equation}
    (with $(Q, \tilde{Q}) = (q^\eta, q^\theta)$), the process $(l^{(t)})_{-M+1 \leq t \leq N-1}$ converges to the process $(\x^{(t)})_{-M+1 \leq t \leq N-1}$ (recall that $l^{(t)}_i = \lambda^{(t)}_i - M + i$ for all $1 \leq i \leq L_t$). Moreover one has, as explained above, that $\exp( -\epsilon \omega_{I, J}^{\rm geo}) \to \omega_{I, J}^{\rm pow}$, i.~e.~the geometric random variables become power random variables. 
    
    The local rules are also well-behaved under the limit, notably due to the logarithm function present in going from discrete to continuous. For example, the rule for constructing the first part of the partition $\nu$ inside $rowRSK^{\rm pow}$, which reads 
    \begin{equation}
        \nu_1 = \max(\alpha_1, \beta_1) + G
    \end{equation}  
    becomes, under the substitution $(\nu_1, \alpha_1, \beta_1, G) = -\epsilon^{-1} (\log \v_1, \log \a_1, \log \b_1, \log g)$, as follows:
    \begin{equation}
        -\frac{\log \v_1}{\epsilon} = \max\left(-\frac{\log \a_1}{\epsilon}, -\frac{\log \b_1}{\epsilon} \right) - \frac{\log g}{\epsilon}.
    \end{equation}
    Clearing out the $\epsilon$ and the signs, using the fact that $\max(-x, -y) = -\min(x, y)$, that $\min(\log c, \log d) = \log \min (c, d)$ (as $\log$ is increasing, and similarly for $\max$ replacing $\min$), and finally exponentiating everything in the end we end up with the announced rule given in $rowRSK^{\rm pow}$ for the first part of the vector $\v$:
    \begin{equation}
        \v_1 = g \min(\a_1, \b_1).
    \end{equation} 
    Similarly simple computations show that every one of the three local rules $\mathcal{R}^{\rm geo}$ ``converges'' to the corresponding rule $\mathcal{R}^{\rm pow}$. Note that for the $pushBlock$ rules, we also need to use the (true) observation that the truncated geometric random variables used in the discrete case converge, in the same sense as the non-truncated ones, to the truncated power variables of the continuous setting.

    To summarize the proof, we have three algorithms for exact random sampling of the discrete process $(l^{(t)})_t$ (via the partition process $(\lambda^{(t)})_{t}$) from the randomness $(\omega_{I, J}^{\rm geo})_{I, J}$ where we omit ranges for brevity. In the limit $q \to 1-$ the aforementioned discrete process converges to $(\x^{(t)})_t$, the randomness converges to that of the matrix $(\omega_{I, J}^{\rm pow})_{I, J}$, and all local rules $\mathcal{R}^{\rm geo}$ have well-defined $q \to 1-$ analogues in the corresponding $\mathcal{R}^{\rm pow}$. It follows that the usage of the latter rules samples exactly the process $(\x^{(t)})_t$ starting from the randomness $(\omega_{I, J}^{\rm pow})_{I, J}$.
\end{proof}

Let us now record the connection to the directed last passage percolation model of Section~\ref{sec:cont_lpp}. Consider the following two random variables defined in sec.~cit.: 
\begin{equation} \label{eq:L_pow_fin}
    \begin{split}
        L_1^{\rm pow} = \min\limits_{\pi} \left[ \prod\limits_{(I-\frac{1}{2}, J-\frac{1}{2}) \in \pi} \omega_{I,J}^{\rm pow} \right], \quad L_2^{\rm pow} = \min\limits_{\varpi} \left[ \prod\limits_{(I-\frac{1}{2}, J-\frac{1}{2}) \in \varpi} \omega_{I,J}^{\rm pow} \right] 
    \end{split}
\end{equation}
where the product in $L_1$ is taken over all up-right lattice paths $\pi$ from $(1/2, 1/2)$ to $(M-1/2, N-1/2)$; and the product in $L_2$ is taken over all down-right lattice paths $\varpi$ from $(1/2, N-1/2)$ to $(M-1/2, 1/2)$.

We have the following continuous analogue of Theorem~\ref{thm:greene_disc_fin}.

\begin{thm}[Greene--Krattenthaler theorem, finite continuous setting]~\label{thm:greene_cont_fin}
    Using either $rowRSK^{\rm pow}$ or $pushBlock^{\rm pow}$ as local rules, we have $\x^{(0)}_1 = L_1^{\rm pow}$. Using $colRSK^{\rm pow}$ as local rule, we have $\x^{(0)}_1 = L_2^{\rm pow}$. In particular, $L_1^{\rm pow} = L_2^{\rm pow}$ in distribution. 
\end{thm}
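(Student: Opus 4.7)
The plan is to obtain this result as a $q \to 1-$ limit of Theorem~\ref{thm:greene_disc_fin}, paralleling the derivation of Theorem~\ref{thm:cont_sampling} from its discrete counterpart. First I would couple the two sources of randomness: given a realization of the continuous field $(\omega_{I,J}^{\rm pow})$, construct a coupled discrete field $(\omega_{I,J}^{\rm geo}(\epsilon))$ via the quantile coupling between ${\rm Pow}(\gamma_{I,J})$ and ${\rm Geom}(e^{-\gamma_{I,J}\epsilon})$, where
\begin{equation}
\gamma_{I,J} = \alpha + \eta(M-I+\tfrac12) + \theta(N-J+\tfrac12),
\end{equation}
so that $e^{-\epsilon\, \omega_{I,J}^{\rm geo}(\epsilon)} \to \omega_{I,J}^{\rm pow}$ almost surely as $\epsilon \to 0+$. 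With this coupling in hand, at every fixed $\epsilon > 0$ apply the discrete local rule $\mathcal{R}^{\rm geo}$ to produce the partition process $(\lambda^{(t)}(\epsilon))_t$; the computation at the end of the proof of Theorem~\ref{thm:cont_sampling} shows that pathwise $\exp(-\epsilon \lambda_i^{(t)}(\epsilon)) \to x_i^{(t)}$, where $(\x^{(t)})_t$ is the output of the corresponding continuous rule $\mathcal{R}^{\rm pow}$ applied to $(\omega_{I,J}^{\rm pow})$.

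Second, I would pass the path-level identity through the limit. For any single up-right (resp.\ down-right) path $\pi$ in the $M\times N$ grid,
\begin{equation}
\exp\Bigl(-\epsilon \sum_{(I,J) \in \pi} \omega_{I,J}^{\rm geo}(\epsilon)\Bigr) = \prod_{(I,J) \in \pi} e^{-\epsilon \omega_{I,J}^{\rm geo}(\epsilon)} \xrightarrow[\epsilon \to 0+]{} \prod_{(I,J) \in \pi} \omega_{I,J}^{\rm pow}.
\end{equation}
Since the set of such paths is finite and $t \mapsto e^{-\epsilon t}$ is strictly decreasing, $\max_\pi$ in the exponent turns into $\min_\pi$ over products, yielding $\exp(-\epsilon L_k^{\rm geo}(\epsilon)) \to L_k^{\rm pow}$ for $k \in \{1,2\}$. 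Applying Theorem~\ref{thm:greene_disc_fin} at each $\epsilon$ gives $\lambda_1^{(0)}(\epsilon) = L_1^{\rm geo}(\epsilon)$ pathwise under $rowRSK^{\rm geo}$ or $pushBlock^{\rm geo}$, and $\lambda_1^{(0)}(\epsilon) = L_2^{\rm geo}(\epsilon)$ under $colRSK^{\rm geo}$. Exponentiating (after multiplying by $-\epsilon$) and letting $\epsilon \to 0+$ then yields $x_1^{(0)} = L_1^{\rm pow}$ or $x_1^{(0)} = L_2^{\rm pow}$ respectively. The distributional equality $L_1^{\rm pow} \stackrel{d}{=} L_2^{\rm pow}$ follows because Theorem~\ref{thm:cont_sampling} guarantees that the marginal law of $\x^{(0)}$ is independent of the local rule employed.

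The main obstacle I anticipate is purely bookkeeping: verifying that the discrete-to-continuous passage commutes simultaneously with each of the three local rules \emph{and} with the path extremum, so that the pathwise equality $\lambda_1^{(0)}(\epsilon) = L_k^{\rm geo}(\epsilon)$ transports cleanly through the coupling. The former is essentially the content of the proof of Theorem~\ref{thm:cont_sampling} but needs to be tracked jointly with the path statistic, while the latter is immediate as $\min/\max$ over a fixed finite set is a continuous functional of its arguments. A self-contained alternative, which I would mention but not pursue, is to transcribe Greene's original combinatorial argument directly to the $(\min, \times)$-semiring on $(0,1]$---literally the image of the usual $(\max, +)$ setting under $-\log$---but going through the already-established discrete result is shorter and reuses the machinery already in place.
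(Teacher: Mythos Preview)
Your proposal is correct and follows essentially the same route as the paper: both pass to the $q \to 1-$ limit of the discrete Greene--Krattenthaler Theorem~\ref{thm:greene_disc_fin}, using that $\exp(-\epsilon\,\omega_{I,J}^{\rm geo}) \to \omega_{I,J}^{\rm pow}$, that $\max\sum$ becomes $\min\prod$ under $t\mapsto e^{-\epsilon t}$, and that $\exp(-\epsilon\,\lambda_1^{(0)}) \to x_1^{(0)}$ via the convergence of local rules established in Theorem~\ref{thm:cont_sampling}. Your version is somewhat more careful in making the coupling explicit (quantile coupling) and in noting that the first part under $pushBlock$ depends only on the field $(\omega_{I,J})$, but the argument is the same.
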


\begin{proof}
    The proof is again a simple $q \to 1-$ limit of the result in the discrete finite $M,N$ version, namely Theorem~\ref{thm:greene_disc_fin}. Recall that as $q = e^{-\epsilon}$ for $\epsilon \to 0+$ we have $\exp( -\epsilon \omega_{I, J}^{\rm geo}) \to \omega_{I, J}^{\rm pow}$. Then 
    \begin{equation}
        \max \sum \omega_{I, J}^{\rm geo} = \max \left( - \sum \epsilon^{-1} \log \omega_{I, J}^{\rm pow} \right) = - \epsilon^{-1} \log \min \left( \prod \omega_{I, J}^{\rm pow} \right) 
    \end{equation} 
    which shows that $\exp(- \epsilon L_i^{\rm geo} ) \to L_i^{\rm pow}$ (for $i = 1, 2$). Together with the fact that $\exp(-\epsilon \lambda^{(0)}_1) \to x^{(0)}_1$ and the discrete finite $M, N$ Greene--Krattenthaler Theorem~\ref{thm:greene_disc_fin}, this finishes the proof.
\end{proof}

% \paragraph{Final remarks.} \TODO{connections to finite temp and max $+$ algebras; Noumi--Yamada; mix and matching the rules, ...}

\subsection{Proofs of Theorems~\ref{thm:disc_lpp} and~\ref{thm:cont_lpp} }

We begin by proving the continuous last passage percolation result as it is the easiest. 

\begin{proof}[Proof of Theorem~\ref{thm:cont_lpp}]
    By Theorem~\ref{thm:greene_cont_fin} (with $M=N$) we have that $L^{\rm pow}_1 = L^{\rm pow}_2 = x^{(0)}_1$ in distribution, and then Theorem~\ref{thm:gap_prob_cont} gives the desired result.
\end{proof}

\begin{rem}
    Let us remark here that the hard-edge gap probability result of Theorem~\ref{thm:gap_prob_cont} and the directed last passage percolation result of Theorem~\ref{thm:cont_lpp} are \emph{the same result} viewed two different ways. 
\end{rem}

\begin{proof}[Proof of Theorem~\ref{thm:disc_lpp}]

Let us recall from Theorem~\ref{thm:greene_disc_inf} that, in distribution, we have $L_1^{\rm geo} = L_2^{\rm geo} = \lambda_1$ where $\lambda_1$ is the first part of the principally specialized Schur measure given in~\eqref{eq:schur_inf}. As this measure is explicitly determinantal~\cite{oko01, or03}, it follows that the distribution of $L = \lambda_1$ is the following Fredholm determinant:
\begin{equation}
    \P(L \leq n) = \det(1-\tilde{K}_d)_{\ell^2 \{ n+1, n+2, \dots \}}
\end{equation}
where the kernel $\tilde{K}_d$ is given by 
\begin{equation}
    \tilde{K}_d (k, \ell) = \oint\limits_{|z| = 1 + \delta} \frac{dz}{2 \pi i} \oint\limits_{|w| = 1 - \delta} \frac{dw}{2 \pi i} \frac{F_d(z)}{F_d(w)} \frac{w^{\ell}}{z^{k+1}} \frac{1}{z-w}, \quad F_d (z) = \frac{(\sqrt{a} \tilde{Q}^{1/2}/z; \tilde{Q})_{\infty}}{(\sqrt{a} Q^{1/2}z; Q)_{\infty}}
\end{equation}
with $0 < \delta$ small enough so that $\sqrt{a} \tilde{Q}^{1/2} < 1 - \delta < 1 + \delta < (\sqrt{a} Q^{1/2})^{-1}.$ (In passing, we note $\tilde{K}_d (k, \ell) = K_d(0, k+M; 0, \ell+M)\left|_{M=N=\infty}\right.$ meaning we take $K_d$ at time 0, remove the shift by $M$, and take $M=N=\infty$.)

The asymptotic analysis that follows is almost identical to that of Appendix C.3 of~\cite{bfo20}, except the integrand is different and we use different estimates for the main part of the integrand. Computations are very similar to the ones proving Theorem~\ref{thm:cont_corr}.

We are interested in the following limit (recall $(Q, \tilde{Q}) = (q^\eta, q^\theta)$):
\begin{equation} \label{eq:limit_inf}
    q = e^{-\epsilon}, \quad a = e^{-\alpha \epsilon}, \quad (k, \ell) = \left( \frac{x}{\epsilon} - \frac{\log(\epsilon \eta)}{\epsilon \eta} - \frac{\log(\epsilon \theta)}{\epsilon\theta}, \frac{y}{\epsilon} - \frac{\log(\epsilon \eta)}{\epsilon \eta} - \frac{\log(\epsilon \theta)}{\epsilon \theta} \right), \quad \epsilon \to 0+
\end{equation}
where $ \alpha \geq 0 $ is a priori fixed. 

We can estimate the integrand using the following formula (valid for real $c \notin -\N$):
\begin{equation}
\log (u^c; u)_{\infty} = -\frac{\pi^2}{6} r^{-1} + \left( \frac{1}{2} - c \right) \log r + \frac{1}{2} \log(2 \pi) - \log \Gamma(c) + O(r), \quad u = e^{-r}, \quad r \to 0+
\end{equation}
which\footnote{In words, this means the $q$-Gamma function converges to the Gamma function as $q \to 1-.$} holds uniformly for $c$ in compact sets and we use it for $u \in \{Q, \tilde{Q}\}$. We change variables as $(z, w) = (e^{\epsilon \zeta}, e^{\epsilon \omega})$, the contours becoming vertical lines close and parallel to $i \R$. Then we have the estimate
\begin{equation}
    \frac{F_d(z)}{F_d(w)} \approx \exp\left[ (\omega-\zeta)\left( \frac{\log(\epsilon \eta)}{\eta} + \frac{\log(\epsilon \theta)}{\theta} \right) \right] \frac{\Gamma\left( \frac{\alpha}{2\eta} - \frac{\zeta}{\eta} + \frac{1}{2} \right) \Gamma\left( \frac{\alpha}{2\theta} + \frac{\omega}{\theta} + \frac{1}{2} \right)}{\Gamma\left( \frac{\alpha}{2\theta} + \frac{\zeta}{\theta} + \frac{1}{2} \right) \Gamma\left( \frac{\alpha}{2\eta} - \frac{\omega}{\eta} + \frac{1}{2} \right)}
\end{equation}
and we notice that the $(k, \ell)$ scaling together with the change of variables clear away the $\epsilon$-dependent exponential factor once $w^\ell/z^k$ is plugged in.

By choosing the $(\zeta, \omega)$ contours close to $i\R$ and bounded away from the poles (which are now the arithmetic progressions $(\frac{\alpha}{2} + \frac{(2i-1)\eta}{2})_i, (- \frac{\alpha}{2} - \frac{(2i-1)\theta}{2})_i, \ i \geq 1$ for $\zeta$ and $\omega$ respectively), we have that the integrand, multiplied by $\epsilon^{-1}$, is bounded. It then follows that 
\begin{equation}
    \epsilon^{-1} \tilde{K}_d (k, \ell) \to \tilde{K}_{he}(x, y)   
\end{equation}
uniformly for $x, y$ in compact sets.

To finish, we are left with showing convergence of Fredholm determinants to Fredholm determinants. This will follow from applying the usual Hadamard bound argument once we show exponential decay of the kernel. First denote $\tilde{K}_d^{\epsilon}(x, y) = \epsilon^{-1} \tilde{K}_d(k(\epsilon), \ell(\epsilon))$ where the dependence of $k$ and $\ell$ on $\epsilon$ is given in equation~\eqref{eq:limit_inf}. Next let $\gamma = \min(\alpha/2 + \theta/2, \alpha/2 + \eta/2)/4$ (one fourth the distance from 0 to the nearest pole of the $\Gamma$ functions involved) and let $\nu = 2 \gamma$. The dependence of $\tilde{K}_d^{\epsilon}$ on $x, y$ comes from $k=k(\epsilon)$ and $\ell=\ell(\epsilon)$ and that dependence is simple: $w^\ell/z^k$. Choose contour radii for $z$ and $w$ such that $|z| \geq 1 + (\nu + \gamma) \epsilon$ and $|w| \leq 1 + (\nu-\gamma) \epsilon$ (such contours are possible due to our choice of $\nu, \gamma$ close enough to $0$). We thus have: $|w^\ell/z^k| \leq \frac{  (1+(\nu-\gamma) \epsilon)^{y/\epsilon}   }{(1+(\nu+\gamma) \epsilon)^{x/\epsilon}} \precsim e^{-\nu(x-y)} e^{-\gamma(x+y)}$ and the same $\precsim$ inequality can be written for $|\tilde{K}_d^{\epsilon}|$. This means that the conjugated kernel $\tilde{K}_d^{\epsilon, \rm conj} (x, y) = e^{\nu x}e^{-\nu y} \tilde{K}_d^{\epsilon} (x, y)$ satisfies exponential decay
\begin{equation}
    |\tilde{K}_d^{\epsilon, \rm conj} (x, y)| \precsim e^{- \nu (x+y)}
\end{equation}
and since conjugation does not change Fredholm determinants, we have shown that in the limit~\eqref{eq:limit_inf} and with $n = \frac{s}{\epsilon} - \frac{\log(\epsilon \eta)}{\epsilon \eta} - \frac{\log(\epsilon \theta)}{\epsilon\theta}$ we have
\begin{equation}
    \det(1-\tilde{K}_d)_{\ell^2 \{ n+1, n+2, \dots \}} \to \det(1-\tilde{K}_{he})_{L^2 (s, \infty)}.
\end{equation}
This concludes the proof.
\end{proof}

\begin{rem}
    The distribution of the random variable $L = L_{1}^{\rm geo} = L_{2}^{\rm geo}$ (the latter two have the same distribution) from the above proof is of course by Theorem~\ref{thm:greene_disc_inf} the same as that of $\Lambda_{1,1}$, the first (corner) part of a random plane partition of measure~\eqref{eq:pp_measure} and unrestricted ($M=N=\infty$) base. In short $L = \Lambda_{1,1}$ in distribution. When $\eta = \theta = 1$, this distribution is known to be Gumbel~\cite[Thm.~1]{vy06}. Notice that the Vershik--Yakubovich result is slightly stronger than ours: the estimate is improved by a factor of the form $- \log \log |\epsilon|-\log 2$, meaning they look at the limit $\lim_{\epsilon \to 0} \P(\epsilon L + 2 \log \epsilon - \log |\log \epsilon| - \log 2 < s)$ which they find to be Gumbel via elementary means. 
\end{rem}

\section{Conclusion} \label{sec:conclusion}

In this note we have used the theory of Schur processes to introduce and discuss probabilistic aspects of (integrable) discrete and continuous Muttalib--Borodin processes. We have also shown how simple asymptotic estimates yield the (we believe universal) hard-edge fluctuation behavior of these processes and how this behavior has natural interpretations in terms of discrete and continuous inhomogeneous last passage percolation models.  

Several further questions would be worth investigating. We include a few below.
\begin{itemize}
    \item Both LPP results we have given here, including the continuous one, are in ``zero temperature''. It would be interesting to investigate lifts of these (asymptotic) results to positive temperature and polymer partition functions, in the spirit of~\cite{bcf14}. We note that the combinatorial side is by now well established, see~\cite{kir01, ny04, cosz14, osz14, pm17, boz20}.
    \item For brevity, we have not investigated \emph{all possible asymptotic regimes} for our parameters and we suspect that interesting behavior happens in other cases. One can look at~\cite{joh08} for some clues. Alternatively and intuitively, if $\theta, \eta \to 0$ with $a$ (in the discrete case) and $\alpha$ (in the continuous case) fixed, one should see Tracy--Widom Airy and not Bessel fluctuations (by comparing with~\cite{joh00}), though a little care is needed since the model becomes singular in the $M=N=\infty$ case and presumably one would want $M, N\to \infty$ at comparable rate with $\eta, \theta \to 0$. We plan to address some of these other asymptotic regimes in a future note if they ``turn out to be interesting''.
    \item One can consider more general (and in fact finite) inhomogeneously weighted LPP geometries than the ones considered here; one can also consider other discrete tiling models---the Aztec diamond is an easy example---giving rise to MBPs and their connections to LPP models. Some aspects of this have already been hinted at by Borodin--Gorin--Strahov~\cite{bgs19}, and we hope to address this in some level of generality in future work with E. Strahov. 
    \item We have not addressed at all most of the other asymptotic phenomena one finds in plane partitions: limit shapes, corner (GUE minors) processes, bulk behavior, etc. See~\cite{or03, or06, or07} for a flavor. 
    \item While the connection to Muttalib--Borodin ensembles is mostly lost, considering principally specialized Schur processes of the type we do in Section~\ref{sec:disc_proofs} should yield interesting results if we change boundary conditions and look at pfaffian processes/free boundaries~\cite{bbnv18} or cylindric boundaries/finite temperature fermions~\cite{bb19}.
    \item We have also not addressed any limits of our discrete and continuous processes beyond the Jacobi-like limit we discuss. In particular, one would hope non-trivial behavior can be found even at lower-level limits of Hermite-type perhaps generalizing GUE Dyson Brownian motion. Baryshnikov has already established a link between the latter and LPP models~\cite{bar01}. 
    \item Okounkov~\cite{oko01} (see also references therein for the general context) has shown that correlation functions (and in fact even more general observables) for powersum specialized Schur measures (Schur measures in Miwa variables) satisfy differential equations of integrable type belonging to the 2-Toda hierarchy (and by limits and specializations, the KP and KdV hierarchies). Are there interesting such equations for the principally specialized Schur measures considered here? Furthermore, are there ``nice'' differential or integro-differential equations satisfied by gap probabilities from the various kernels mentioned above (and below)?
    % \item \TODO{matrix models...}
\end{itemize}

\appendix

\section{Alternative formulas for our kernels} \label{sec:appendix}

In this section we give alternative formulas for our kernels $K_d, K_c$ and $K_{he}$. Some formulas have different contours of integration, while others are explicit sums of (basic) hypergeometric type.

\subsection{Alternative formulas for $K_d$} \label{sec:alt_d}

The first alternative formula for $K_d$ involves simple if tedious contour manipulations.

\begin{prop} \label{prop:K_d_alt_1}
    Consider the discrete kernel $K_d(s, k; t, \ell)$ for finite integer parameters $N \geq M \geq 1$. We have 
    \begin{equation}
        K_d(s, k; t, \ell) = -\oint_{C_z} \frac{dz}{2 \pi i} \oint_{C_w} \frac{d w}{2 \pi i} \frac{F_d(s, z)}{F_d(t, w)} \frac{w^{\ell-M}}{z^{k-M+1}} \frac{1}{z-w} - V_d(s, k; t, \ell)
    \end{equation}
    where $F_d$ is the same as in Theorem~\ref{thm:disc_corr}; where 
    \begin{equation}
        V_d(s, k; t, \ell) = \Id_{[s > t]} \cdot \begin{dcases}
            \Id_{[s \geq 0]} \oint_{C'_z} \frac{z^{\ell-k-1} dz}{2 \pi i} \cdot (\text{$z$ function on RHS of eq.~\eqref{eq:V_int_d}}), & \text{if } k < \ell, \\
            \Id_{[0 > t]} \oint_{C''_z} \frac{z^{\ell-k-1} dz}{2 \pi i} \cdot (\text{$z$ function on RHS of eq.~\eqref{eq:V_int_d}}), & \text{if } k \geq \ell;
        \end{dcases}
    \end{equation}
    and where the contours are as follows: 
    \begin{itemize}
        \item $C_z$ is a simple closed counter-clockwise contour (possibly disconnected) encircling the finitely many $z$-poles of the integrand of the form $(\sqrt{a} Q^{1/2+\cdots})^{-1}$ and nothing else;
        \item $C_w$ is a simple closed counter-clockwise contour (possibly disconnected) encircling the finitely many $w$-poles of the integrand of the form $\sqrt{a} \tilde{Q}^{1/2+\cdots}$ and nothing else (and in particular not $0$);
        \item $C'_z$ (respectively $C''_z$) is a simple closed counter-clockwise (respectively clockwise) contour (possibly disconnected) encircling the finitely many $z$-poles of the integrand of the form $\sqrt{a} \tilde{Q}^{1/2+\cdots}$ (respectively of the form $(\sqrt{a} Q^{1/2+\cdots})^{-1}$) and nothing else.
    \end{itemize}
\end{prop}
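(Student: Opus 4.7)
The proof is a contour deformation argument starting from the formula for $K_d$ in Theorem~\ref{thm:disc_corr}, combined with the residue theorem. The starting observation is that $F_d(s, z)$ is an explicit rational function whose only $z$-poles are the finite collection $\{(\sqrt{a}\, Q^{s^- + 1/2 + j})^{-1} : 0 \leq j \leq M - s^- - 1\}$, all lying outside the unit circle, and $1/F_d(t, w)$ has only $w$-poles at $\{\sqrt{a}\, \tilde{Q}^{t^+ + 1/2 + j} : 0 \leq j \leq N - t^+ - 1\}$, all lying inside. The plan is to replace the contours $|z| = 1 + \delta$ and $|w| = 1 - \delta$ of Theorem~\ref{thm:disc_corr} by contours tracking only these two collections of poles.

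For the double contour integral, my plan is to deform the $z$-contour outward from $|z| = 1 + \delta$ past the exterior $z$-poles of $F_d(s,z)$ to infinity (for fixed $w$), and the $w$-contour inward from $|w| = 1 - \delta$ to $C_w$. The outward $z$-deformation requires verifying decay of the integrand at infinity, which follows from the asymptotic $F_d(s, z) \sim z^{-(M - s^-)}$ as $z \to \infty$; combined with the $z^{k-M+1}(z-w)$ in the denominator, the $z$-integrand decays like $z^{s^- - k - 2}$. Since the residues of such a meromorphic function sum to zero, the original integral over $|z|=1+\delta$ equals the negative of the integral over $C_z$, producing the overall $-$ sign in front of the double contour integral in the proposition.

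For the single contour term $-\Id_{[s > t]} \oint_{|z|=1+\delta} \frac{F_d(s,z)}{F_d(t,z)} z^{\ell - k - 1} \frac{dz}{2 \pi i}$, I would use equation~\eqref{eq:V_int_d} to simplify the integrand, then split into the two cases according to whether $z^{\ell - k - 1}$ has a pole at $0$. When $k < \ell$ the integrand is regular at $0$, so the contour may be shrunk inward (counter-clockwise) to $C'_z$ around the small poles of the form $\sqrt{a}\tilde{Q}^{1/2 + \cdots}$; by~\eqref{eq:V_int_d} these poles exist exactly when $s^+ > t^+$, which in the presence of $\Id_{[s > t]}$ is equivalent to $s \geq 0$, giving the indicator $\Id_{[s \geq 0]}$ in the formula for $V_d$. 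When $k \geq \ell$, the pole at $0$ prevents direct inward deformation; instead one sends the contour outward past the large poles (present exactly when $t^- > s^-$, i.e.\ when $0 > t$, giving $\Id_{[0 > t]}$) and to infinity. The clockwise orientation of $C''_z$ combined with the outward deformation produces two compensating sign flips which match the sign in the stated formula.

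The main obstacle is careful bookkeeping of residues and signs throughout, across the various cases for $(s, t)$ and $(k, \ell)$. In particular, the inward deformation of the $w$-contour in the double integral may cross a pole at $w = 0$ precisely when $t > 0$ and $\ell < M - N + t^+$: one must verify that this residue either vanishes in the relevant regime or is matched exactly by an appropriate piece of the simplified single contour term, thereby producing $V_d$ and nothing more. The required asymptotic decay estimates, both at $z = \infty$ and near $w = 0$, depend on the signs of $s, t$ and on $s^\pm, t^\pm, k, \ell, M, N$, and must be checked case by case before the residue theorem is invoked.
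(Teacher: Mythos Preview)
Your approach is essentially the same as the paper's: deform the $z$-contour through $\infty$ (picking up a sign from orientation reversal) and shrink the $w$-contour past $0$; for the single integral, split on the sign of $\ell-k$ and deform inward or outward accordingly, the extra indicators $\Id_{[s\geq 0]}$ and $\Id_{[0>t]}$ arising from the cases in~\eqref{eq:V_int_d} where no poles of the relevant type are present.

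The one place you diverge is what you flag as ``the main obstacle,'' namely a possible pole at $w=0$ in the double integral when $t>0$ and $\ell < M - N + t^+$. In fact this case never arises: the kernel is only evaluated at particle positions $\ell = \lambda^{(t)}_i + M - i$ with $1 \leq i \leq L_t$, so $\ell \geq M - L_t$, and a direct check of $L_t$ in~\eqref{eq:def_Lt} gives $\ell \geq M - L_t \geq M - N + t^+$ in all cases. Combined with the monomial $w^{N-t^+}$ coming from the $\tilde{Q}$-Pochhammer numerator of $1/F_d(t,w)$, the net exponent $N - t^+ + \ell - M$ is non-negative and $w=0$ is never a pole. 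The paper's proof makes exactly this observation (and the parallel one that $z=\infty$ is not a pole, using $k \geq M - L_s$, which is what makes your decay exponent $s^- - k - 2 \leq -2$). So no residue at $w=0$ needs to be ``matched'' against anything; the obstacle simply does not occur once the particle-position constraint is invoked.
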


\begin{proof} 
    For the double contour integrals we take the contours given in Theorem~\ref{thm:disc_corr} and first observe that since $0$ is not a pole for $w$, we can just take it out of the contour. For that, the cases $t \leq 0$ and $t > 0$ are considered separately. If $t \leq 0$ we have $\ell - M \geq -L_t$\footnote{Recall the slice at position $t$, $l^{(t)}$, has $l^{(t)}_i \geq M-L_t$, $\forall \ 1 \leq i \leq L_t$, since $l^{(t)}_i = \lambda^{(t)}_i + M - i$ and $\lambda^{(t)}_i \geq 0$.} and we have a degree $N$ monomial $w^N$ coming from the length $N$ $\tilde{Q}$-Pochhammer symbol in the numerator of $F_d$ so overall we have a possible contribution of $w^{N+\ell-M}$ with exponent $N+\ell-M \geq 0$ showing $w=0$ cannot be a pole. For $t>0$ the power of $w$ which matters is $w^{N-t+\ell-M}$ and again the exponent $N-t+\ell-M \geq N-t-L_t \geq 0$ with the last inequality following from the definition of $L_t$ in~\eqref{eq:def_Lt}. Similarly tedious computations show that $\infty$ is not a pole for the $z$-integrand. We can move the initial contour $|z|=1+\delta$ (for some very small $\delta > 0$) through $\infty$ to close it again on the other side encircling only the desired finitely many poles. Reversing the direction to orient the new contour counter-clockwise yields the overall minus sign.

    The same arguments lead to the form of the $V_d$ function. For the case $k \geq \ell$ we see $\infty$ is not a pole for the integrand, and one can deform the appropriate contours via $\infty$ changing their orientation; for the case $k < \ell$, 0 is not a pole and can be excluded from the contour. One notices though that in certain cases the contours contain no poles whatsoever due to the specific form of $F_d(s, z)/F_d(t, z)$ on the right-hand side of equation~\eqref{eq:V_int_d}, and this leads to some additional indicators \emph{as indicated}. 
\end{proof}

There is yet another form of $K_d$ that we give next. Recall the notation of~\eqref{eq:plus_minus_parts}. We then have the following proposition.

\begin{prop} \label{prop:K_d_alt_2}
    It holds that 
\begin{equation}
    \begin{split}
    K_d(s, k; t, \ell) = (\sqrt{a})^{k + \ell - 2M + 2} \sum_{i=0}^{M - s^- - 1} \sum_{j=0}^{N - t^+ - 1}  \left[ (-1)^{i+j}    
    \frac{  Q^{(s^- + i + \tfrac12)(k-M+1)} \tilde{Q}^{(t^+ + j + \tfrac12)(\ell-M+1)}  } {1 - a Q^{s^- + i + \tfrac12}  \tilde{Q}^{t^+ + j + \tfrac12} }  \right. \\
    \left. \times \frac{Q^{\binom{i+1}{2}} \tilde{Q}^{\binom{j+1}{2}} (a Q^{s^- + i + \tfrac12} \tilde{Q}^{s^+ + \tfrac12}; \tilde{Q})_{N-s^+} (a Q^{t^- + \tfrac12} \tilde{Q}^{t^+ + j + \tfrac12}; Q)_{M-t^-} }{(Q; Q)_i (Q; Q)_{M - s^- - 1 - i} (\tilde{Q}; \tilde{Q})_j (\tilde{Q}; \tilde{Q})_{N - t^+ - 1 - j}}\right] - V_d(s, k; t, \ell)
    \end{split}
\end{equation}
where 
\begin{equation}
    V_d(s, k; t, \ell) = \Id_{[s > t]} \cdot \begin{dcases}
        \Id_{[s \geq 0]} \sum_{i=0}^{s^+ - t^+ - 1} \frac{(-1)^i \tilde{Q}^{\binom{i+1}{2}} \left( \sqrt{a} \tilde{Q}^{t^+ + i + \tfrac12} \right)^{k-\ell}  } {  (\tilde{Q};\tilde{Q})_i  (\tilde{Q};\tilde{Q})_{s^+ - t^+ - 1 - i} (a Q^{\tfrac12} \tilde{Q}^{i + \tfrac12} ; Q)_{t^-}  }, & \text{if } k < \ell, \\
        \Id_{[0 > t]} \sum_{i=0}^{t^- - s^- - 1} \frac{(-1)^i Q^{\binom{i+1}{2}} \left( \sqrt{a} Q^{s^- + i + \tfrac12} \right)^{\ell-k}  } {  (Q;Q)_i  (Q;Q)_{t^- - s^- - 1 - i} (a Q^{i + \tfrac12} \tilde{Q}^{\tfrac12} ; \tilde{Q})_{s^+}  }, & \text{if } k \geq \ell.
    \end{dcases}
\end{equation}
\end{prop}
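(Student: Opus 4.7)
The plan is to derive the explicit sum from the contour formula by residue calculus, starting from Proposition~\ref{prop:K_d_alt_1} rather than from Theorem~\ref{thm:disc_corr} directly, since the former already has the contours shrunk down to enclose only finitely many simple poles.

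For the double contour part, I first identify the poles of the integrand. In the $z$-variable, $F_d(s,z)$ has simple poles (coming from the denominator $(\sqrt{a} Q^{s^-+1/2} z;Q)_{M-s^-}$) at $z_i = (\sqrt{a} Q^{s^- + i + 1/2})^{-1}$ for $0 \leq i \leq M - s^- - 1$; the $\tilde{Q}$-Pochhammer in the numerator is analytic there. Dually, $1/F_d(t,w)$ has simple poles (coming from $1/(\sqrt{a}\tilde{Q}^{t^++1/2}/w;\tilde{Q})_{N-t^+}$) at $w_j = \sqrt{a}\tilde{Q}^{t^+ + j + 1/2}$ for $0 \leq j \leq N - t^+ - 1$. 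The contours $C_z$ and $C_w$ of Proposition~\ref{prop:K_d_alt_1} enclose exactly these poles, the pole locations are well-separated (so $z_i \neq w_j$), and $1/(z - w)$ is regular at every pair $(z_i, w_j)$. Hence the double integral equals $\sum_{i,j} \mathrm{Res}_{(z_i,w_j)}$, with the overall minus sign in front from Proposition~\ref{prop:K_d_alt_1}.

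The residue computation is routine $q$-Pochhammer algebra. Writing $1 - \sqrt{a} Q^{s^- + i + 1/2} z = -\sqrt{a} Q^{s^-+i+1/2}(z-z_i)$ and using the elementary identity $\prod_{k \neq i,\, 0 \leq k < m}(1 - Q^{k-i}) = (-1)^i Q^{-i(i+1)/2}(Q;Q)_i (Q;Q)_{m-1-i}$, with $m = M - s^-$, one finds $\mathrm{Res}_{z_i}[1/(\sqrt{a}Q^{s^-+1/2}z;Q)_{M-s^-}] = (-1)^{i+1} Q^{\binom{i}{2}}/(\sqrt{a}Q^{s^-+1/2}(Q;Q)_i(Q;Q)_{M-s^--1-i})$, and analogously in the $w$-variable with an extra factor of $w_j$ arising because the pole is at $\sqrt{a}\tilde{Q}^{t^++1/2}/w = 1$. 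Evaluating the remaining regular factors at $(z_i,w_j)$ produces $(aQ^{s^-+i+1/2}\tilde{Q}^{s^++1/2};\tilde{Q})_{N-s^+}$ and $(aQ^{t^-+1/2}\tilde{Q}^{t^++j+1/2};Q)_{M-t^-}$ from the surviving Pochhammer numerators; the factor $1/(z_i - w_j)$ simplifies to $\sqrt{a}Q^{s^-+i+1/2}/(1 - aQ^{s^-+i+1/2}\tilde{Q}^{t^++j+1/2})$; and the monomials $z_i^{-(k-M+1)}$ and $w_j^{\ell-M}$ combine with the extra $\sqrt{a}$-prefactors to give the overall $(\sqrt{a})^{k+\ell-2M+2}$, the $Q$-powers $Q^{(s^-+i+1/2)(k-M+1)}$ and the $\tilde{Q}$-powers $\tilde{Q}^{(t^++j+1/2)(\ell-M+1)}$. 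The identity $\binom{i}{2} + i = \binom{i+1}{2}$ (coming from the extra $Q^i$ absorbed from the $1/(z_i-w_j)$ simplification), and analogously for $\tilde{Q}$, produces the symbols $Q^{\binom{i+1}{2}}$ and $\tilde{Q}^{\binom{j+1}{2}}$ in the final answer. Collecting signs yields $(-1)^{i+j}$ after absorbing the overall $-1$.

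The $V_d$ term is handled identically, using the simplified ratio $F_d(s,z)/F_d(t,z)$ from~\eqref{eq:V_int_d}. For $k < \ell$, one contracts the contour $C'_z$ around the $\tilde{Q}$-type poles $z = \sqrt{a}\tilde{Q}^{t^++i+1/2}$ (present only when $s^+ > t^+$, i.e.\ when $s \geq 0$), applying the same residue formula as above. For $k \geq \ell$, one expands the contour through $\infty$ (where the integrand decays) to $C''_z$ enclosing the $Q$-type poles $z = (\sqrt{a}Q^{s^-+i+1/2})^{-1}$ clockwise (present only when $t^- > s^-$, i.e.\ when $t < 0$), picking up an overall sign from the reversed orientation that is absorbed into the final positive displayed sum. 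The main bookkeeping obstacle is keeping track of these case distinctions and sign flips cleanly; Proposition~\ref{prop:K_d_alt_1} was designed precisely to do this contour surgery once and for all, so by its application the present proof becomes a direct residue computation with no further contour manipulation required.
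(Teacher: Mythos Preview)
Your proposal is correct and follows exactly the approach of the paper: compute the residues at the finitely many simple poles enclosed by the contours of Proposition~\ref{prop:K_d_alt_1}. In fact your write-up is considerably more detailed than the paper's own proof, which simply says the result ``consists of massive residue calculations from the formulas given in Proposition~\ref{prop:K_d_alt_1} bearing in mind that every single pole inside every single contour is simple.''
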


\begin{proof}
    The proof consists of massive residue calculations from the formulas given in Proposition~\ref{prop:K_d_alt_1} bearing in mind that every single pole inside every single contour is simple.
\end{proof}

\subsection{Alternative formulas for $K_c$} \label{sec:alt_c}

We first write $K_c$ in terms of the Fox $H$-function~\eqref{eq:fox_h}. Recall that it depends on integer parameters $N \geq M \geq 1$ and continuous parameters $\alpha \geq 0$ and $\eta, \theta > 0$.

\begin{prop} \label{prop:K_c_alt_0}
    The continuous kernel $K_c$ has the following form:
\begin{equation}
    K_{c} (s, x; t, y) = \frac{1}{\sqrt{xy}} \int_0^1 f_c^{(s)}\left(\frac{1}{ux}\right) g_c^{(t)}(uy) \frac{du}{u} - \Id_{[s > t]} \frac{h(s, x; t, y)}{\sqrt{xy}} 
\end{equation}
where $h(s, x; t, y)$ has the same form as in Proposition~\ref{prop:fox_h} and where
\begin{equation}
    \begin{split}
    f_c^{(s)}(x) &= \begin{dcases}
        \frac{\theta^N}{\eta^{M-|s|}} H^{1, 1}_{2, 2} \left[ x \left| \begin{array}{c} \left(\frac{\alpha}{2 \eta} + |s| + \frac{1}{2} , \frac{1}{\eta}\right), \left(\frac{\alpha}{2 \eta} + M + \frac{1}{2} , \frac{1}{\eta}\right) \\ \left(\frac{\alpha}{2 \theta} + N + \frac{1}{2} , \frac{1}{\theta} \right), \left( \frac{\alpha}{2 \theta} + \frac{1}{2} , \frac{1}{\theta} \right)  \end{array} \right] \right. & \text{if } s \leq 0, \\
        \frac{\theta^{N-s}}{\eta^M} H^{1, 1}_{2, 2} \left[ x \left| \begin{array}{c} \left(\frac{\alpha}{2 \eta} + \frac{1}{2} , \frac{1}{\eta}\right), \left(\frac{\alpha}{2 \eta} + M + \frac{1}{2} , \frac{1}{\eta} \right) \\ \left(\frac{\alpha}{2 \theta} + N + \frac{1}{2} , \frac{1}{\theta} \right), \left(\frac{\alpha}{2 \theta} + s + \frac{1}{2} , \frac{1}{\theta} \right)  \end{array} \right] \right. & \text{if } s > 0,
    \end{dcases} \\
    &\ \\
    g_c^{(s)}(x) &= \begin{dcases}
        \frac{\eta^{M-|s|}}{\theta^N} H^{1, 1}_{2, 2} \left[ x \left| \begin{array}{c} \left(\frac{\alpha}{2 \eta} + M + \frac{1}{2} , \frac{1}{\eta} \right), \left(\frac{\alpha}{2 \eta} + |s| + \frac{1}{2} , \frac{1}{\eta}\right) \\ \left( \frac{\alpha}{2 \theta} + \frac{1}{2} , \frac{1}{\theta} \right), \left(\frac{\alpha}{2 \theta} + N + \frac{1}{2} , \frac{1}{\theta} \right)  \end{array} \right] \right. & \text{if } s \leq 0, \\
        \frac{\eta^M}{\theta^{N-s}} H^{1, 1}_{2, 2} \left[ x \left| \begin{array}{c} \left(\frac{\alpha}{2 \eta} + M + \frac{1}{2} , \frac{1}{\eta} \right), \left(\frac{\alpha}{2 \eta} + \frac{1}{2} , \frac{1}{\eta}\right) \\ \left(\frac{\alpha}{2 \theta} + s + \frac{1}{2} , \frac{1}{\theta} \right), \left(\frac{\alpha}{2 \theta} + N + \frac{1}{2} , \frac{1}{\theta} \right)  \end{array} \right] \right. & \text{if } s > 0.
    \end{dcases}
    \end{split}
\end{equation}
\end{prop}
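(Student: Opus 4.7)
The plan is to split $K_c$ into its double-contour integral part and its single-contour indicator part and to treat each separately, following the same template used in the proof of Proposition~\ref{prop:fox_h}.

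For the double-contour integral, I would first insert the Mellin--Barnes-type identity $\frac{1}{\zeta-\omega} = \int_0^1 u^{\zeta-\omega-1}\,du$, which is legal because the vertical contours in Theorem~\ref{thm:cont_corr} satisfy $\operatorname{Re}(\zeta) > \operatorname{Re}(\omega)$. This turns the double integral into $\int_0^1 \frac{du}{u}$ times the product of a $\zeta$-integral depending only on $ux$ and an $\omega$-integral depending only on $uy$. Absolute convergence of the iterated integral (justifying Fubini) follows from Stirling's formula, which gives $|\Gamma(\sigma+it)|\sim |t|^{\sigma-1/2}e^{-\pi|t|/2}$ decay along each vertical line, so the $\Gamma$-ratios in $F_c$ are dominated by a Schwartz-type tail.

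Next I would expand every Pochhammer in $F_c(s,\zeta)$ using $(x)_n = \Gamma(x+n)/\Gamma(x)$, turning each of $F_c(s,\zeta)$ and $1/F_c(t,\omega)$ into a ratio of four Gamma factors. Reading off the four $\Gamma$'s and their pole structures, and matching with the Fox $H$-function template~\eqref{eq:fox_h} (with $p=q=2$, $m=n=1$), one identifies the $\zeta$-integral with $f_c^{(s)}(1/(ux))$ and the $\omega$-integral with $g_c^{(t)}(uy)$; the swap in the exponents of $\eta$ and $\theta$ between $f_c^{(s)}$ and $g_c^{(s)}$ simply reflects the fact that inverting $F_c$ swaps numerator and denominator. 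The case split $s \lessgtr 0$ is handled by the same computation but with the relevant factor in $F_c$ being the ``$s>0$'' expression, matching the four sub-formulas in the statement.

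For the single-contour indicator part, I would invoke the observation in equation~\eqref{eq:V_int_c}: the ratio $\Id_{[s>t]} F_c(s,\zeta)/F_c(t,\zeta)$ is independent of $M,N$ (all $M,N$-dependent $\Gamma$ factors cancel) and coincides with $\Id_{[s>t]} F_{he}(s,\zeta)/F_{he}(t,\zeta)$. Hence the corresponding single contour integrand is exactly the one already analyzed in Proposition~\ref{prop:fox_h}, and the Fox $H$-representation of $h(s,x;t,y)$ carries over verbatim, justifying the three-case formula with $H^{1,0}_{2,0}$, $H^{0,1}_{0,2}$, and $H^{1,1}_{2,2}$.

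The main obstacle is purely bookkeeping rather than analytic: one must track the shifts $s^+, s^-, t^+, t^-$ through the four case combinations $s\lessgtr 0$, $t \lessgtr 0$ and verify that in each case the pole-separation hypothesis for the Fox $H$-integrals (poles of $\Gamma(a_i-e_iz)$ to the right, of $\Gamma(b_j+c_jz)$ to the left of the vertical contour) is satisfied by any $0<\delta<\tfrac12$, just as for $K_c$ itself.
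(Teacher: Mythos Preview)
Your approach matches the paper's exactly: the paper's proof consists of the single sentence ``The proof is the same as that of Proposition~\ref{prop:fox_h},'' i.e.\ insert $\frac{1}{\zeta-\omega}=\int_0^1 u^{\zeta-\omega-1}\,du$, expand the Pochhammer symbols in $F_c$ as $\Gamma$-ratios, and read off the $H^{1,1}_{2,2}$ parameters, while the indicator term is handled by the $M,N$-independence observation~\eqref{eq:V_int_c} you cite.

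One small correction to your Fubini sketch: the claim that the $\Gamma$-ratios in $F_c$ have ``Schwartz-type'' decay along vertical lines is not right. Writing $F_c(s,\zeta)$ as a ratio of four $\Gamma$'s, the exponential factors $e^{-\pi|t|/2}$ from Stirling cancel pairwise (numerator against denominator, since each pair shares the same $\pm\zeta/\eta$ or $\pm\zeta/\theta$ argument), leaving only polynomial behavior in $|\Im\zeta|$. The cleanest way around this is to work instead with the closed contours of Proposition~\ref{prop:K_c_alt_1} (or equivalently Remark~\ref{rem:cont_K_c}), where all integrals are over compact sets and Fubini is trivial; the identification with the Fox $H$-function then amounts to reopening the contours, which is the contour-deformation argument already spelled out in Remark~\ref{rem:cont_K_c}. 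The paper does not address this point explicitly either, so your argument is not missing anything the paper supplies.
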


\begin{proof}
    The proof is the same as that of Proposition~\ref{prop:fox_h} (in the remarks preceding it). Note the difference between the $f_c$ and $g_c$ functions is just the order in which the top and bottom parameters appear (in addition to having ``different times'' in the kernel); this is explained by our integral being of a very special form as in Proposition~\ref{prop:fox_h}.
\end{proof}

Next we have a result parallel to Proposition~\ref{prop:K_d_alt_1}: i.e.~we simply rewrite the original (double) integration contours for $K_c$ as closed contours around only the relevant poles.  

\begin{prop} \label{prop:K_c_alt_1}
    We have 
    \begin{equation}
        K_c(s, x; t, y) = -\frac{1}{\sqrt{xy}} \oint_{C_\zeta} \frac{d \zeta}{2 \pi i} \oint_{C_\omega} \frac{d \omega}{2 \pi i} \frac{F_c(s, \zeta)}{F_c(t, \omega)} \frac{x^\zeta}{y^\omega} \frac{1}{\zeta-\omega} - V_c(s, x; t, y)
    \end{equation}
    where $F_c$ is the same as in Theorem~\ref{thm:cont_corr}; where 
    \begin{equation}
        V_c(s, x; t, y) = \frac{\Id_{[s > t]}} {\sqrt{xy}} \cdot \begin{dcases}
            \Id_{[s \geq 0]} \oint_{C'_\zeta} \frac{  d \zeta}{2 \pi i} \cdot  \left(\frac{x}{y}\right)^\zeta \cdot (\text{$\zeta$ function on RHS of eq.~\eqref{eq:V_int_c}}), & \text{if } x > y, \\
            \Id_{[0 > t]} \oint_{C''_\zeta} \frac{  d \zeta}{2 \pi i} \cdot  \left(\frac{x}{y}\right)^\zeta \cdot (\text{$\zeta$ function on RHS of eq.~\eqref{eq:V_int_c}}), & \text{if } x \leq y;
        \end{dcases}
    \end{equation}
    and where the contours are as follows: 
    \begin{itemize}
        \item $C_\zeta$ is a simple closed counter-clockwise contour (possibly disconnected) encircling the finitely many $\zeta$-poles of the integrand of the form $\frac{\alpha}{2}+\frac{\eta}{2}+\cdots$ and nothing else;
        \item $C_\omega$ is a simple closed counter-clockwise contour (possibly disconnected) encircling the finitely many $\omega$-poles of the integrand of the form $-\frac{\alpha}{2}-\frac{\theta}{2}-\cdots$ and nothing else (and in particular not $0$);
        \item $C'_\zeta$ (respectively $C''_\zeta$) is a simple closed counter-clockwise (respectively clockwise) contour (possibly disconnected) encircling the finitely many $\zeta$-poles of the integrand of the form $-\frac{\alpha}{2}-\frac{\theta}{2}-\cdots$ (respectively of the form $\frac{\alpha}{2}+\frac{\eta}{2}+\cdots$) and nothing else.
    \end{itemize}
\end{prop}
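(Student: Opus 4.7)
The plan is to start from the vertical-contour form of $K_c$ given in Theorem~\ref{thm:cont_corr} and close each contour through the point at infinity on the side where the integrand exhibits exponential decay. The structural input is that $F_c(s,\zeta)$ is a ratio of Pochhammer symbols, i.e.\ a meromorphic ratio of Gamma functions whose only poles are simple and come in two finite arithmetic progressions: a right-ladder of poles of the form $\tfrac{\alpha}{2}+\tfrac{\eta}{2}+\cdots$ coming from the $\eta$-Pochhammer in the denominator of $F_c(s,\zeta)$, and a left-ladder of poles of the form $-\tfrac{\alpha}{2}-\tfrac{\theta}{2}-\cdots$ coming from the $\theta$-Pochhammer in $1/F_c(t,\omega)$.

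For the double contour integral, I would close $\zeta$ to the right and $\omega$ to the left. The vanishing of the two semicircular arcs follows from Stirling's estimate (Gamma ratios grow at most polynomially on vertical strips, and on the arcs the blow-up is controlled as long as one avoids the real axis, which is arranged by a standard indentation) combined with the exponential factors $x^\zeta$ and $y^{-\omega}$: since $x,y\in(0,1)$, $|x^\zeta|=e^{\Re(\zeta)\log x}\to 0$ as $\Re(\zeta)\to+\infty$, and similarly $|y^{-\omega}|\to 0$ as $\Re(\omega)\to-\infty$. The factor $(\zeta-\omega)^{-1}$ is harmless on the arcs because the $\zeta$ and $\omega$ contours, being separated by the strip $-\delta<\Re<\delta$, never meet. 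Residue theorem then yields
\begin{equation*}
    \int_{\delta+i\R}\!\!\frac{d\zeta}{2\pi i}\,(\cdots)=-\oint_{C_\zeta}\!\!\frac{d\zeta}{2\pi i}\,(\cdots),\qquad \int_{-\delta+i\R}\!\!\frac{d\omega}{2\pi i}\,(\cdots)=+\oint_{C_\omega}\!\!\frac{d\omega}{2\pi i}\,(\cdots),
\end{equation*}
where the sign in the first line reflects the fact that closing to the right and orienting counterclockwise reverses the direction induced by closing; the product of these two identities produces the overall minus sign in front of the double contour integral in the statement.

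For the single contour integral, the decay that drives contour closure comes from $(x/y)^\zeta$ alone, and its sign depends on whether $x>y$ or $x\le y$. When $x>y$ one has $\log(x/y)>0$, forcing closure on the left and hence capture of the left-ladder poles; when $x\le y$ one closes on the right and captures the right-ladder poles. The orientation after closure is counterclockwise in the first case (matching $C_\zeta'$) and clockwise in the second (matching $C_\zeta''$), by the same arc-direction bookkeeping as above. What remains is to decide when the captured ladder is actually non-empty, which is where the indicators come in: inspection of the three branches of $\Id_{[s>t]}F_c(s,\zeta)/F_c(t,\zeta)$ in~\eqref{eq:V_int_c} shows that the denominator contains only a $\theta$-Pochhammer (hence only left poles) when $s>t\ge 0$, only an $\eta$-Pochhammer (hence only right poles) when $0\ge s>t$, and both when $s\ge 0>t$. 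Intersecting these pole-existence conditions with the closure sides gives exactly $\Id_{[s\ge 0]}$ for $x>y$ and $\Id_{[0>t]}$ for $x\le y$, matching the proposition.

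The main obstacle, though routine, is the careful bookkeeping: one must correctly track which side each contour is closed on, the associated orientation-reversal sign, the non-crossing of $C_\zeta$ and $C_\omega$ during deformation, and the case split that yields the two indicator functions. A secondary check is that the omitted arc contributions really do vanish: for the double integral the combined $\Gamma$-ratio factors (analyzed via Stirling along vertical strips) can be absorbed into $|x^\zeta y^{-\omega}|$ thanks to the separation of contours, and for the single integral the sole potentially delicate factor is $F_c(s,\zeta)/F_c(t,\zeta)$, whose $\zeta$-dependence is a reciprocal of a finite-length Pochhammer and hence polynomially bounded away from its zeros, again dominated by the exponential decay of $(x/y)^\zeta$.
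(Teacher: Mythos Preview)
Your argument is correct and follows the direct route sketched informally in the paper's Remark~\ref{rem:cont_K_c}: start from the vertical-line contours of Theorem~\ref{thm:cont_corr} and close them on the appropriate side using the exponential decay of $x^\zeta$, $y^{-\omega}$, and $(x/y)^\zeta$. The sign and indicator bookkeeping you carry out is accurate.

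The paper's own proof takes a different path: rather than deforming the continuous contours, it obtains the closed-contour form of $K_c$ as the $q\to 1-$ limit of the discrete closed-contour formula already established in Proposition~\ref{prop:K_d_alt_1}. Since that proposition has already pushed the $z$ and $w$ contours of $K_d$ to small closed loops around the relevant finitely many poles (using that $0$ and $\infty$ are not poles in the discrete setting), the $q\to 1-$ limit, via $(z,w)=(1+\epsilon\zeta,1+\epsilon\omega)$ and the estimate $(u^c;u)_n/(1-u)^n\to(c)_n$, preserves these closed contours with no further deformation needed.

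The paper's route sidesteps the arc-vanishing issue entirely: since Remark~\ref{rem:cont_K_c} itself flags that the vertical-line presentation of $K_c$ is ``improperly written'' and is really shorthand for the closed or Hankel contours, your starting point is, strictly speaking, a formal object. For the single integral your decay argument is clean ($F_c(s,\zeta)/F_c(t,\zeta)$ is a reciprocal Pochhammer, hence $O(|\zeta|^{-d})$), but for the double integral $F_c(s,\zeta)$ is a rational function whose degree $N-s^+-(M-s^-)$ can be non-negative, so the horizontal segments of a rectangular closing require the extra care you acknowledge (e.g.\ send the far vertical side to $+\infty$ first, where $x^\zeta$ dominates, then treat the horizontals). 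Your approach has the virtue of being self-contained at the continuous level; the paper's avoids this bookkeeping at the cost of depending on Proposition~\ref{prop:K_d_alt_1}.
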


\begin{proof}
    The proof follows by taking the limit $q \to 1-$ in the kernel from Proposition~\ref{prop:K_d_alt_1} with
    \begin{equation}
        q = e^{-\epsilon}, \quad a = q^{-\alpha \epsilon}, \quad (k, \ell) = -\epsilon^{-1} (\log x, \log y), \quad \epsilon \to 0+
    \end{equation}
    and with $Q = q^\eta, \tilde{Q}=q^{\theta}$. All that is used is that $(u^c; u)_n/(1-u)^n \to (c)_n$ as $u \to 1-$; here $u \in \{ Q, \tilde{Q} \}$; and the need for introducing denominators $(1-u)^n$ for appropriate $n$ and $u$ as before explains some of the peculiar factors like the powers of $\eta$ and $\theta$ embedded in $F_c$. In this asymptotic regime we have
    \begin{equation}
        \epsilon^{s-t-1} K_d(s, k; t, \ell) \to K_c(s, x; t, y)
    \end{equation}
    after changing the integration variables on the left to $(z, w) = (1 + \zeta \epsilon, 1 + \omega \epsilon)$ and interchanging limits and integrals. Here we have to do this with care but we note the same arguments of~\cite[Prop.~45]{bfo20} apply mutatis mutandis. Second, the proof of Proposition 45 of op.~cit.~also insures the contours remain closed throughout: we just have to choose them a bounded distance away from the finitely many relevant poles. The limit is uniform for $x, y$ in compact sets. Finally we recall the overall factor $1/\sqrt{xy}$ in front of everything comes from the Jacobian $dk \propto dx/x$ (and $1/x$ becomes $1/\sqrt{xy}$ after conjugation of the kernel).
\end{proof}

Finally, we write $K_c$ as a finite sum (of residues). This will allow us to explicitly compare $K_c$ to Borodin's Muttalib--Jacobi finite-$N$ kernel.

\begin{prop} \label{prop:K_c_alt_2}
    Recalling the notation of~\eqref{eq:plus_minus_parts}, it holds that 
\begin{equation}
    \begin{split}
    K_c(s, x; t, y) = C_{\eta, \theta}^{s, t} \cdot (xy)^{\tfrac{\alpha-1}{2}} \sum_{i=0}^{M - s^- - 1} \sum_{j=0}^{N - t^+ - 1}  \left[ (-1)^{i+j}    
    \frac{  x^{\eta (s^- + i + \tfrac12)} y^{\theta (t^+ + j + \tfrac12)}  } {\alpha + \eta(s^- + i + \tfrac12) + \theta(t^+ + j + \tfrac12) }  \right. \\
    \left. \times \frac{\left(\tfrac{\alpha}{\theta} + (s^- + i + \tfrac12) \tfrac{\eta}{\theta} + (s^+ + \tfrac12) \right)_{N-s^+}  \left(\frac{\alpha}{\eta} + (t^- + \tfrac12) + (t^+ + j + \tfrac12) \tfrac{\theta}{\eta} \right)_{M-t^-}       }{ i! j! (M - s^- - 1 - i)! (N - t^+ - 1 - j)! }\right] - V_c(s, x; t, y)
    \end{split}
\end{equation}
with $C_{\eta, \theta}^{s, t} = \eta^{s^- - t^- + 1} \theta^{t^+ - s^+ + 1}$ and with  
\begin{equation}
    V_c(s, x; t, y) = \frac{\Id_{[s > t]}} {\sqrt{xy}} \cdot \begin{dcases}
        \Id_{[s \geq 0]} \sum_{i=0}^{s^+ - t^+ - 1} \frac{   \eta^{- t^-} \theta^{t^+ - s^+ + 1}  (-1)^i  \left[\frac{x}{y}\right]^{\tfrac{\alpha}{2} + \theta (t^+ + i + \tfrac12)}   } { i!  (s^+ - t^+ - 1 - i)! \left( \tfrac{\alpha}{\eta} + \tfrac12 + (i + \tfrac12) \tfrac{\theta}{\eta} \right)_{t^-} }, & \text{if } x > y, \\
        \Id_{[0 > t]} \sum_{i=0}^{t^- - s^- - 1} \frac{  \eta^{s^- - t^- + 1} \theta^{- s^+}  (-1)^i \left[ \frac{y}{x} \right]^{ \tfrac{\alpha}{2} + \eta(s^- + i + \tfrac12)} }   {  i!  (t^- - s^- - 1 - i)! \left( \tfrac{\alpha}{\theta} + (i + \tfrac12)\tfrac{\eta}{\theta} + \tfrac12 \right)_{s^+}   }, & \text{if } x \leq y.
    \end{dcases}
\end{equation}
\end{prop}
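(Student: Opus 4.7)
The plan is to derive Proposition~\ref{prop:K_c_alt_2} by explicit residue evaluation starting from the closed-contour formula of Proposition~\ref{prop:K_c_alt_1}. Since $C_\zeta, C_\omega$ each encircle only finitely many poles of the integrand and all of these are simple, the double contour integral will collapse to a finite double sum $\sum_{i,j}\operatorname{Res}_{\zeta = \zeta_i}\operatorname{Res}_{\omega=\omega_j}$. Inspecting $F_c(s,\zeta)/F_c(t,\omega)$, the $\zeta$-poles inside $C_\zeta$ are exactly $\zeta_i = \tfrac{\alpha}{2} + \eta(s^- + i + \tfrac12)$ for $0 \leq i \leq M - s^- - 1$, arising from the Pochhammer $(\tfrac{\alpha}{2\eta}-\tfrac{\zeta}{\eta}+s^-+\tfrac12)_{M-s^-}$ in the denominator of $F_c(s,\zeta)$; the $\omega$-poles inside $C_\omega$ are exactly $\omega_j = -\tfrac{\alpha}{2} - \theta(t^+ + j + \tfrac12)$ for $0 \leq j \leq N - t^+ - 1$, arising from the Pochhammer $(\tfrac{\alpha}{2\theta}+\tfrac{\omega}{\theta}+t^++\tfrac12)_{N-t^+}$ in the numerator of $F_c(t,\omega)$.

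The residue computation itself will be elementary---a direct cousin of the one used to derive Proposition~\ref{prop:K_d_alt_2} from Proposition~\ref{prop:K_d_alt_1}. The key underlying identity is that for $P(\zeta) = \prod_{k=0}^{n-1}(a + k + b\zeta)$ with $b \neq 0$, the simple pole of $1/P$ at $\zeta_i = -(a+i)/b$ carries residue $(-1)^i/\bigl(b \cdot i!\,(n-1-i)!\bigr)$. The non-vanishing complementary (numerator) Pochhammer of $F_c(s,\zeta_i)$ and of $F_c(t,\omega_j)$ will be obtained by direct substitution and simplify, respectively, to $\bigl(\tfrac{\alpha}{\theta}+(s^-+i+\tfrac12)\tfrac{\eta}{\theta}+s^++\tfrac12\bigr)_{N-s^+}$ and $\bigl(\tfrac{\alpha}{\eta}+t^-+\tfrac12+(t^++j+\tfrac12)\tfrac{\theta}{\eta}\bigr)_{M-t^-}$. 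The monomial part telescopes as
\[
\frac{1}{\sqrt{xy}}\cdot x^{\zeta_i}\,y^{-\omega_j} \;=\; (xy)^{(\alpha-1)/2}\, x^{\eta(s^- + i + 1/2)}\, y^{\theta(t^+ + j + 1/2)},
\]
and $1/(\zeta_i - \omega_j) = 1/\bigl(\alpha + \eta(s^- + i + \tfrac12) + \theta(t^+ + j + \tfrac12)\bigr)$, producing the summand displayed in the statement. To finish, I will track prefactor powers: the ratio $\theta^{N-s^+}\eta^{M-t^-}/(\eta^{M-s^-}\theta^{N-t^+})$ from $F_c(s,\zeta)/F_c(t,\omega)$ combined with the $\pm\eta,\pm\theta$ from the residue-of-reciprocal formula will yield $C_{\eta,\theta}^{s,t} = \eta^{s^- - t^- + 1}\theta^{t^+ - s^+ + 1}$, while the signs $(-1)^{i+j+1}$ from the two residues will absorb the outer minus sign in Proposition~\ref{prop:K_c_alt_1} to leave $(-1)^{i+j}$.

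The $V_c$ term will be handled identically from the single-integral expression in Proposition~\ref{prop:K_c_alt_1} together with the three explicit forms of $F_c(s,\zeta)/F_c(t,\zeta)$ recorded in~\eqref{eq:V_int_c}. The two cases in the statement match the dichotomy of which side of the real axis we can deform the vertical contour to: for $x > y$, $(x/y)^\zeta$ decays as $\Re\zeta \to -\infty$, so $C'_\zeta$ encloses only the left-half-plane poles $-\tfrac{\alpha}{2}-\theta(t^+ + i + \tfrac12)$ (of which there are $s^+ - t^+$ many, giving the indicator $\Id_{[s \geq 0]}$); for $x \leq y$, $C''_\zeta$ encloses only the right-half-plane poles $\tfrac{\alpha}{2}+\eta(s^- + i + \tfrac12)$ (of which there are $t^- - s^-$ many, giving $\Id_{[0 > t]}$). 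Applying the same residue identity produces the announced single sums. An entirely equivalent, contour-free alternative---suggested by the remark following the proof of Theorem~\ref{thm:cont_corr}---is to obtain Proposition~\ref{prop:K_c_alt_2} as the $q \to 1-$ limit of Proposition~\ref{prop:K_d_alt_2} under $q = e^{-\epsilon}$, $a = e^{-\alpha\epsilon}$, $(k, \ell) = -\epsilon^{-1}(\log x, \log y)$, after multiplying by $\epsilon^{s-t-1}$ and using $(u^c; u)_n/(1-u)^n \to (c)_n$ as $u \to 1-$ for $u \in \{Q, \tilde{Q}\}$; the sums being finite, no dominated-convergence argument is needed.

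The main obstacle will not be conceptual but arithmetic: cleanly tracking signs (the $(-1)^{i+j+1}$ from residues versus the outer minus sign of Proposition~\ref{prop:K_c_alt_1}, and the $(-1)^i$ from $\prod_{k\neq i}(k-i)$), the bookkeeping of $\eta,\theta$ prefactor powers (both from the $\theta^{N-s^+}/\eta^{M-s^-}$ normalization inside $F_c$ and from each residue contributing a $\mp\eta$ or $\theta$), and the case matching in $V_c$ between the sign of $s,t$, the form of the pole being enclosed in~\eqref{eq:V_int_c}, and the $x$-versus-$y$ dichotomy governing the contour deformation.
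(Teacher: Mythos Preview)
Your proposal is correct and matches the paper's own proof. The paper leads with the $q \to 1-$ limit of Proposition~\ref{prop:K_d_alt_2} (your ``entirely equivalent, contour-free alternative'') and then notes that the same result follows from residue calculus on Proposition~\ref{prop:K_c_alt_1} (your primary approach); you simply reverse the emphasis, and your detailed tracking of the residues, signs, and prefactor powers is accurate.
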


\begin{rem} \label{rem:bor_lim_fin}
    One can now check that in the case $s=t=0, M=N, \eta = 1$ the formula for $K_c$ above matches, \emph{up to conjugation}, that of Borodin's finite kernel~\cite[eq.~(3.5)]{bor99} (the reader should incorporate the factors $x_i^{\alpha_B}$ in Proposition 3.3 of op.~cit.~back into the kernel). We argue as follows: first conjugate $K_{c}$ by $\left(\frac{x}{y}\right)^{\tfrac{\alpha}{2} + \tfrac{\theta}{2} - \tfrac12}$, and then denote $\alpha_B = \alpha + \tfrac{\theta}{2} - \tfrac12$. We then obtain
    \begin{equation}
        K^{\rm conj}_{c}(0, x; 0, y) = \theta x^{\alpha_B} \sum_{i=0}^{N-1} \sum_{j=0}^{N-1} \frac{ (\alpha_B+1+j \theta)_N \left(\frac{\alpha_B+1+i}{\theta}\right)_N} {i!j!(M-1-i)!(N-1-j)!} \frac{x^i y^{j \theta}}{\alpha_B + 1 + i + \theta j}
    \end{equation}
    agreeing with (the aforementioned finite-$N$) Borodin's kernel~\cite{bor99}.
\end{rem}

\begin{proof}
    The proof follows by taking the same limit $q \to 1-$ as in the proof of the previous proposition. Again we only use $(u^c; u)_n/(1-u)^n \to (c)_n$ as $u \to 1-$ with $u \in \{ Q, \tilde{Q} \}$ and the denominators $(1-u)^n$ (for appropriate $n$ and $u$) explain some of the peculiar factors like the powers of $\eta$ and $\theta$. As before
    \begin{equation}
        \epsilon^{s-t-1} K_d(s, k; t, \ell) \to K_c(s, x; t, y)
    \end{equation}
    as $\epsilon \to 0+$ but now we note \emph{there are no convergence issues} as all sums in Proposition~\ref{prop:K_d_alt_2} are finite. Again the overall factor $1/\sqrt{xy}$ comes from the Jacobian $dk \propto dx/x$ and conjugation.

    The same result can be alternatively proven by doing residue calculus on the formulas in Proposition~\ref{prop:K_c_alt_1} noting that every pole in every integral is simple.
\end{proof}

\subsection{Alternative formulas for $K_{he}$} \label{sec:alt_he}

\begin{prop} \label{prop:K_he_alt_2}
    It holds that 
\begin{equation}
        \begin{split}
        & K_{he}(s, x; t, y) = C_{\eta, \theta}^{s, t} \cdot \sum_{i,j=0}^{\infty} \left[    
        \frac{ 1 } {\alpha + \eta(s^- + i + \tfrac12) + \theta(t^+ + j + \tfrac12) }  \right. \\
        & \quad \left. \times \frac{(-1)^{i+j}  x^{\tfrac{\alpha-1}{2} + \eta (s^- + i + \tfrac12)} y^{\tfrac{\alpha-1}{2} + \theta (t^+ + j + \tfrac12)}}{i! j! \Gamma \left( \tfrac{\alpha}{\theta} + (s^- + i + \tfrac12) \tfrac{\eta}{\theta} + (s^+ + \tfrac12) \right) \Gamma \left(\frac{\alpha}{\eta} + (t^- + \tfrac12) + (t^+ + j + \tfrac12) \tfrac{\theta}{\eta} \right) }\right] - V_{he}(s, x; t, y) \\
        & \quad = - V_{he}(s, x; t, y) + \\
        & C_{\eta, \theta}^{s, t} \int_0^1 (ux)^{\tfrac{\alpha-1}{2} + \eta (s^- + \tfrac12)}  (uy)^{\tfrac{\alpha-1}{2} + \theta (t^+ + \tfrac12)} J_{ \tfrac{\alpha}{\theta} + (s^- + \tfrac12) \tfrac{\eta}{\theta} + (s^+ + \tfrac12), \tfrac{\eta}{\theta}} [ (ux)^\eta ] J_{ \frac{\alpha}{\eta} + (t^- + \tfrac12) + (t^+ + \tfrac12) \tfrac{\theta}{\eta} , \tfrac{\theta}{\eta}} [ (uy)^\theta ] du
    \end{split}
\end{equation}
where $C_{\eta, \theta}^{s, t} = \eta^{s^- - t^- + 1} \theta^{t^+ - s^+ + 1}$; where $J_{a, b}(x) = \sum_{k = 0}^\infty \frac{(-x)^k}{k! \Gamma(a+bk)}$ is the same as in~\eqref{eq:J_fn}; and where $V_{he}(s, x; t, y) = V_{c}(s, x; t, y)$ is given in Proposition~\ref{prop:K_c_alt_2} (for which we do not have a formula in terms of the $J$ function).
\end{prop}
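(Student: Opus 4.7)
The plan is to derive both formulas by taking the hard-edge scaling limit of the finite-sum representation for $K_c$ given in Proposition~\ref{prop:K_c_alt_2}, following the prescription of Theorem~\ref{thm:he_kernel}. Concretely, writing $\mu = M^{1/\eta} N^{1/\theta}$, we compute
\begin{equation*}
    K_{he}(s,x;t,y) = \lim_{M,N \to \infty} \frac{1}{\mu}\, K_c\!\left(s,\tfrac{x}{\mu};t,\tfrac{y}{\mu}\right)
\end{equation*}
starting directly from the double finite sum of Proposition~\ref{prop:K_c_alt_2}. The $V_c$ term depends only on the ratio $x/y$ together with an overall $(\sqrt{xy})^{-1}$, so under the rescaling $x \mapsto x/\mu$, $y \mapsto y/\mu$ the factor $1/\sqrt{xy}$ gains a $\mu$ which cancels the outer $1/\mu$, leaving $V_c$ unchanged and giving $V_{he} = V_c$ as stated.

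The main step is the limit of the double sum. Setting $a_i = \tfrac{\alpha}{\theta} + (s^- + i + \tfrac12)\tfrac{\eta}{\theta} + (s^+ + \tfrac12)$ and $b_j = \tfrac{\alpha}{\eta} + (t^- + \tfrac12) + (t^+ + j + \tfrac12)\tfrac{\theta}{\eta}$, the Pochhammer-over-factorial ratios become
\begin{equation*}
    \frac{(a_i)_{N-s^+}}{(N-t^+-1-j)!} = \frac{\Gamma(a_i + N - s^+)}{\Gamma(a_i)\,\Gamma(N - t^+ - j)}, \qquad \frac{(b_j)_{M-t^-}}{(M-s^--1-i)!} = \frac{\Gamma(b_j + M - t^-)}{\Gamma(b_j)\,\Gamma(M - s^- - i)}.
\end{equation*}
Stirling's ratio asymptotics $\Gamma(N+\alpha)/\Gamma(N+\beta) \sim N^{\alpha-\beta}$ give, as $N \to \infty$, a leading factor $N^{a_i - s^+ + t^+ + j}/\Gamma(a_i)$, and similarly $M^{b_j - t^- + s^- + i}/\Gamma(b_j)$ as $M \to \infty$. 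I will then check that these $M$- and $N$-powers cancel exactly against the rescaling of $(xy)^{(\alpha-1)/2} x^{\eta(s^- + i + 1/2)} y^{\theta(t^+ + j + 1/2)}$ by $\mu^{-(\alpha - 1 + \eta(s^- + i + 1/2) + \theta(t^+ + j + 1/2))}$ and the outer $\mu^{-1}$---a direct algebraic verification. The finite upper limits $M - s^- - 1$ and $N - t^+ - 1$ tend to infinity, and the interchange of limit with the double sum is justified by the factorial decay $1/(i!j!)$ in the summand together with the fact (from Theorem~\ref{thm:he_kernel}) that the left-hand side has a finite limit. What remains is exactly the first displayed formula in Proposition~\ref{prop:K_he_alt_2}.

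For the integral representation, I will apply the standard identity
\begin{equation*}
    \frac{1}{\alpha + \eta(s^- + i + \tfrac12) + \theta(t^+ + j + \tfrac12)} = \int_0^1 u^{\alpha + \eta(s^- + i + \tfrac12) + \theta(t^+ + j + \tfrac12) - 1}\,du
\end{equation*}
(valid since $\alpha \geq 0$ and $\eta,\theta > 0$), and distribute the $u$-exponent as $(\alpha-1)/2 + \eta(s^- + i + 1/2)$ plus $(\alpha-1)/2 + \theta(t^+ + j + 1/2)$ to build $(ux)^{\cdots}$ and $(uy)^{\cdots}$ respectively. After swapping sum and integral (again justified by factorial decay), the double sum factors into an $i$-sum depending only on $ux$ and a $j$-sum depending only on $uy$. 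Extracting the $i$-, respectively $j$-independent powers and comparing with the definition $J_{a,b}(X) = \sum_{k \geq 0} (-X)^k/(k!\,\Gamma(a + bk))$ from~\eqref{eq:J_fn} with $(a,b) = (a_0, \eta/\theta)$ and $X = (ux)^\eta$ (respectively with $(b_0, \theta/\eta)$ and $X = (uy)^\theta$) immediately yields the claimed Wright-$J$ integral.

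The only subtlety I anticipate is the dominated-convergence justification for the interchange of $M, N \to \infty$ with the double sum, since after rescaling the summand depends on $M, N$ through both the $\Gamma$-ratios and the powers of $x, y$. I plan to bound these uniformly using the next-order term in Stirling (which gives the ratio equal to $N^{a_i - s^+ + t^+ + j}(1 + O(1/N))$ uniformly on compact sets of $a_i$, and similarly in $M$), combined with the factorial growth of $i!\,j!$; alternatively one can bypass the issue entirely by noting that the finite-sum and its term-by-term Stirling approximation both converge to the same absolutely convergent double series, whose existence follows from the already-established Theorem~\ref{thm:he_kernel}.
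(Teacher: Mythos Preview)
Your proposal is correct and follows essentially the same route as the paper: take the hard-edge limit of the finite double sum for $K_c$ from Proposition~\ref{prop:K_c_alt_2}, convert Pochhammer symbols to $\Gamma$-ratios, apply the Stirling estimate $\Gamma(z+a)/\Gamma(z+b)\sim z^{a-b}$ to extract and cancel the $M$- and $N$-powers against the rescaling, and then use $A^{-1}=\int_0^1 u^{A-1}\,du$ to pass to the Wright-$J$ integral. The only difference is in the justification of the limit-sum interchange: the paper defers to Borodin's tail-pruning argument~\cite[Thm.~3.4 and Sec.~6]{bor99}, whereas you invoke factorial decay and dominated convergence---note that your uniformity claim for Stirling ``on compact sets of $a_i$'' is not quite enough as stated, since $a_i$ grows with $i$ up to order $M$, which is precisely what the tail-pruning handles.
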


\begin{rem}  \label{rem:bor_lim_he}
    Let us take $s = t = 0$; conjugate $K_{he}$ by $\left(\frac{x}{y}\right)^{\tfrac{\alpha}{2} + \tfrac{\theta}{2} - \tfrac12}$; and denote $\alpha_B = \alpha + \tfrac{\theta}{2} - \tfrac12$. We then obtain
    \begin{equation}
        K^{\rm conj}_{he}(0, x; 0, y) = \theta \int_0^1 J_{\frac{\alpha_B+1}{\theta},\frac{1}{\theta}} [ ux ] J_{\alpha_B+1, \theta} [(uy)^\theta] (ux)^{\alpha_B} du = K_B(x, y)
    \end{equation}
    agreeing (as it should) with Borodin's hard-edge kernel~\cite{bor99} defined in~\eqref{eq:bor_ker}.
\end{rem}

\begin{proof}
    The equivalence between the two forms of the kernel as given is immediate once one uses $A^{-1} = \int_0^1 u^{A-1} du$ with $A = \alpha + \eta(s^- + i + \tfrac12) + \theta(t^+ + j + \tfrac12)$ and then uses the definition of the $J$ function.

    To prove that $K_{he}$ is given by the first equality, we take the form of $K_c$ in~\ref{prop:K_c_alt_2} and take the $M, N\to \infty$ limit 
    \begin{equation}
        \lim_{M, N \to \infty} \frac{1}{M^{\frac{1}{\eta}}N^{\frac{1}{\theta}}} K_c \left( s, \frac{x}{M^{\frac{1}{\eta}}N^{\frac{1}{\theta}}}; t, \frac{y}{M^{\frac{1}{\eta}}N^{\frac{1}{\theta}}} \right) = K_{he} (s, x; t, y)
    \end{equation}
    using \emph{the same standard estimate and tail-pruning argument} as in~\cite[Thm.~3.4 and Sec.~6]{bor99}. We only sketch the argument here. First noticing that, after plugging $(x)_n = \Gamma(x+n)/\Gamma(x)$ into $K_c$ to replace Pochhammer symbols by ratios of $\Gamma$ functions, we have the following $M$ and $N$ dependence in the kernel:
    \begin{equation}
        \begin{split}
            M \text{ dependence } &= \frac{\Gamma(\frac{\alpha}{\eta} + (t^+ + j + \frac12) \frac{\theta}{\eta} + M + \frac12)}{\Gamma(M - s^- - j)}, \\
            N \text{ dependence } &= \frac{\Gamma(\frac{\alpha}{\theta} + (s^- + i + \frac12) \frac{\eta}{\theta} + N + \frac12)}{\Gamma(N - t^+ - j)}.
        \end{split}
    \end{equation}
    One then uses the estimate $\Gamma(z+a) / \Gamma(z+b) \approx z^{a-b}, z \to \infty$ (uniform in compact sets over the other parameters)
    to isolate the asymptotic contribution of the above terms as follows:
    \begin{equation}
        \begin{split}
            M \text{ dependence } &\approx M^{\tfrac{\alpha}{\eta} + (t^+ + j + \tfrac12) \tfrac{\theta}{\eta} + \tfrac12 + (s^- + i)}, \quad M \to \infty, \\
            N \text{ dependence } &\approx N^{\tfrac{\alpha}{\theta} + (s^- + i + \tfrac12) \tfrac{\eta}{\theta} + \tfrac12 + (t^+ + j)}, \quad N \to \infty
        \end{split}
    \end{equation}
    and this concludes the proof after realizing the scaling of $x, y$ and the pre-scaling of $K_c$ cancel these powers.
\end{proof}

\bibliographystyle{myhalpha}
\bibliography{mbe}

\newcommand{\etalchar}[1]{$^{#1}$}
\begin{thebibliography}{BEM{\etalchar{+}}55}

\bibitem[AIK13]{aik13}
G.~Akemann, J.~R. Ipsen, and M.~Kieburg.
\newblock Products of rectangular random matrices: Singular values and
  progressive scattering.
\newblock {\em Phys. Rev. E}, 88:052118, Nov 2013.

\bibitem[Bar01]{bar01}
Y.~Baryshnikov.
\newblock G{UE}s and queues.
\newblock {\em Probab. Theory Related Fields}, 119(2):256--274, 2001.

\bibitem[BB19]{bb19}
D.~Betea and J.~Bouttier.
\newblock The periodic {S}chur process and free fermions at finite temperature.
\newblock {\em Math. Phys. Anal. Geom.}, 22(1):22:3, 2019.

\bibitem[BBB{\etalchar{+}}18]{bbbccv18}
D.~Betea, C.~Boutillier, J.~Bouttier, G.~Chapuy, S.~Corteel, and
  M.~Vuleti\'{c}.
\newblock Perfect sampling algorithms for {S}chur processes.
\newblock {\em Markov Process. Related Fields}, 24(3):381--418, 2018.

\bibitem[BBNV18]{bbnv18}
D.~Betea, J.~Bouttier, P.~Nejjar, and M.~Vuleti\'c.
\newblock The free boundary {S}chur process and applications {I}.
\newblock {\em Ann. Henri Poincar\'e}, 19(12):3663--3742, 2018.

\bibitem[BCF14]{bcf14}
A.~Borodin, I.~Corwin, and P.~Ferrari.
\newblock Free energy fluctuations for directed polymers in random media in $1
  + 1$ dimension.
\newblock {\em Communications on Pure and Applied Mathematics},
  67(7):1129--1214, 2014.

\bibitem[BCGR20]{bcgr20}
E.~Bisi, F.~D. Cunden, S.~Gibbons, and D.~Romik.
\newblock Sorting networks, staircase {Y}oung tableaux, and last passage
  percolation.
\newblock {\em S\'em. Lothar. Combin.}, 84B:Art. 3, 12, 2020.

\bibitem[BEM{\etalchar{+}}55]{bat}
H.~Bateman, A.~Erd{\'e}lyi, W.~Magnus, F.~Oberhettinger, and F.~G. Tricomi.
\newblock {\em {Higher transcendental functions}}.
\newblock California Institute of Technology Bateman Manuscript Project.
  McGraw-Hill, New York, NY, 1955.

\bibitem[Bet18]{b18}
D.~Betea.
\newblock On the combinatorics of last passage percolation in a quarter square
  and {$GOE^2$} fluctuations.
\newblock 2018, arXiv:1809.06792 [math-ph].

\bibitem[BF03]{bf03}
A.~Borodin and P.~J. Forrester.
\newblock Increasing subsequences and the hard-to-soft edge transition in
  matrix ensembles.
\newblock {\em Journal of Physics A: Mathematical and General},
  36(12):2963--2981, 2003.

\bibitem[BFO20]{bfo20}
D.~Betea, P.~Ferrari, and A.~Occelli.
\newblock Stationary half-space last passage percolation.
\newblock {\em Comm. Math. Phys.}, 377:421--467, 2020.

\bibitem[BG15]{bg15}
A.~Borodin and V.~Gorin.
\newblock General {$\beta$}-{J}acobi corners process and the {G}aussian free
  field.
\newblock {\em Comm. Pure Appl. Math.}, 68(10):1774--1844, 2015.

\bibitem[BGS14]{bgs14}
M.~Bertola, M.~Gekhtman, and J.~Szmigielski.
\newblock Cauchy-{L}aguerre two-matrix model and the {M}eijer-{G} random point
  field.
\newblock {\em Comm. Math. Phys.}, 326(1):111--144, 2014.

\bibitem[BGS19]{bgs19}
A.~Borodin, V.~Gorin, and E.~Strahov.
\newblock Product matrix processes as limits of random plane partitions.
\newblock {\em International Mathematics Research Notices}, 01 2019.

\bibitem[Bor99]{bor99}
A.~Borodin.
\newblock Biorthogonal ensembles.
\newblock {\em Nuclear Phys. B}, 536(3):704--732, 1999.

\bibitem[Bor10]{bor10}
F.~Bornemann.
\newblock On the numerical evaluation of {F}redholm determinants.
\newblock {\em Math. Comp.}, 79(270):871--915, 2010.

\bibitem[Bor11]{bor11}
A.~Borodin.
\newblock Schur dynamics of the schur processes.
\newblock {\em Adv. Math.}, 228(4):2268 -- 2291, 2011.

\bibitem[BOZ20]{boz20}
E.~Bisi, N.~O'Connell, and N.~Zygouras.
\newblock The geometric {B}urge correspondence and the partition function of
  polymer replicas.
\newblock 2020, arXiv:2001.09145 [math.PR].

\bibitem[BP16]{bp16}
A.~Borodin and L.~Petrov.
\newblock Nearest neighbor {M}arkov dynamics on {M}acdonald processes.
\newblock {\em Adv. in Math.}, 300:71 -- 155, 2016.
\newblock Special volume honoring Andrei Zelevinsky.

\bibitem[BR05]{br05}
A.~Borodin and E.~M. Rains.
\newblock {E}ynard-{M}ehta theorem, {S}chur process, and their {P}faffian
  analogs.
\newblock {\em J. Stat. Phys.}, 121(3-4):291--317, 2005.

\bibitem[Bur74]{bur74}
W.~H. Burge.
\newblock Four correspondences between graphs and generalized {Y}oung tableaux.
\newblock {\em J. Combinatorial Theory Ser. A}, 17:12--30, 1974.

\bibitem[CK01]{ck01}
R.~Cerf and R.~Kenyon.
\newblock The low-temperature expansion of the {W}ulff crystal in the 3{D}
  {I}sing model.
\newblock {\em Comm. Math. Phys.}, 222(1):147--179, 2001.

\bibitem[COSZ14]{cosz14}
I.~Corwin, N.~O'Connell, T.~Sepp\"{a}l\"{a}inen, and N.~Zygouras.
\newblock Tropical combinatorics and {W}hittaker functions.
\newblock {\em Duke Math. J.}, 163(3):513--563, 2014.

\bibitem[{\relax DLMF}]{nist}
{\it NIST Digital Library of Mathematical Functions}.
\newblock http://dlmf.nist.gov/, Release 1.0.28 of 2020-09-15.
\newblock F.~W.~J. Olver, A.~B. {Olde Daalhuis}, D.~W. Lozier, B.~I. Schneider,
  R.~F. Boisvert, C.~W. Clark, B.~R. Miller, B.~V. Saunders, H.~S. Cohl, and
  M.~A. McClain, eds.

\bibitem[Fom86]{fom86}
S.~Fomin.
\newblock The generalized {R}obinson-{S}chensted-{K}nuth correspondence.
\newblock {\em Zap. Nauchn. Sem. Leningrad. Otdel. Mat. Inst. Steklov. (LOMI)},
  155(Differentsial'naya Geometriya, Gruppy Li i Mekh. VIII):156--175, 195,
  1986.

\bibitem[Fom95]{fom95}
S.~Fomin.
\newblock Schur operators and {K}nuth correspondences.
\newblock {\em J. Combin. Theory Ser. A}, 72(2):277--292, 1995.

\bibitem[For93]{for93}
P.~J. Forrester.
\newblock The spectrum edge of random matrix ensembles.
\newblock {\em Nuclear Phys. B}, 402(3):709--728, 1993.

\bibitem[Fox61]{fox61}
C.~Fox.
\newblock The {G} and {H} functions as symmetrical {F}ourier kernels.
\newblock {\em Transactions of the American Mathematical Society},
  98(3):395--429, 1961.

\bibitem[FR05]{fr05}
P.~J. Forrester and E.~M. Rains.
\newblock Interpretations of some parameter dependent generalizations of
  classical matrix ensembles.
\newblock {\em Probab. Theory Related Fields}, 131(1):1--61, 2005.

\bibitem[Gre74]{gre74}
C.~Greene.
\newblock An extension of {S}chensted's theorem.
\newblock {\em Adv. Math.}, 14:254--265, 1974.

\bibitem[Joh00]{joh00}
K.~Johansson.
\newblock Shape fluctuations and random matrices.
\newblock {\em Comm. Math. Phys.}, 209(2):437--476, 2000.

\bibitem[Joh07]{joh07}
K.~Johansson.
\newblock From {G}umbel to {T}racy--{W}idom.
\newblock {\em Probab. Theory and Related Fields}, 138(1):75--112, 2007,
  arXiv:math/0510181 [math.PR].

\bibitem[Joh08]{joh08}
K.~Johansson.
\newblock On some special directed last-passage percolation models.
\newblock In {\em Integrable systems and random matrices}, volume 458 of {\em
  Contemp. Math.}, pages 333--346. Amer. Math. Soc., Providence, RI, 2008,
  https://arxiv.org/pdf/math/0703492.pdf.

\bibitem[Kir01]{kir01}
A.~N. Kirillov.
\newblock Introduction to tropical combinatorics.
\newblock In {\em Physics and combinatorics, 2000 ({N}agoya)}, pages 82--150.
  World Sci. Publ., River Edge, NJ, 2001.

\bibitem[KM19]{km19}
A.~B.~J. Kuijlaars and L.~D. Molag.
\newblock The local universality of {M}uttalib--{B}orodin biorthogonal
  ensembles with parameter {$\theta=\frac 12$}.
\newblock {\em Nonlinearity}, 32(8):3023--3081, 2019.

\bibitem[Knu70]{knu70}
D.~E. Knuth.
\newblock Permutations, matrices, and generalized young tableaux.
\newblock {\em Pacific J. Math.}, 34(3):709--727, 1970.

\bibitem[Kra06]{kra06}
C.~Krattenthaler.
\newblock {G}rowth diagrams, and increasing and decreasing chains in fillings
  of {F}errers shapes.
\newblock {\em Adv. Appl. Math.}, 37(3):404--431, 2006.

\bibitem[KZ14]{kz14}
A.~B.~J. Kuijlaars and L.~Zhang.
\newblock Singular values of products of {G}inibre random matrices, multiple
  orthogonal polynomials and hard edge scaling limits.
\newblock {\em Comm. Math. Phys.}, 332(2):759--781, 2014.

\bibitem[Mac95]{mac}
I.~G. Macdonald.
\newblock {\em {S}ymmetric functions and {H}all polynomials}.
\newblock Oxford Mathematical Monographs. The Clarendon Press Oxford University
  Press, New York, second edition, 1995.
\newblock With contributions by A. Zelevinsky, Oxford Science Publications.

\bibitem[Mol20]{mol20}
L.~D. Molag.
\newblock The local universality of {M}uttalib--{B}orodin ensembles when the
  parameter {$\theta$} is the reciprocal of an integer.
\newblock 2020, arXiv:2003.11299 [math.CA].

\bibitem[MP17]{pm17}
K.~Matveev and L.~Petrov.
\newblock {$q$}-randomized {R}obinson-{S}chensted-{K}nuth correspondences and
  random polymers.
\newblock {\em Ann. Inst. Henri Poincar\'e D}, 4(1):1--123, 2017.

\bibitem[Mut95]{mut95}
K.~A. Muttalib.
\newblock Random matrix models with additional interactions.
\newblock {\em J. Phys. A}, 28(5):L159--L164, 1995.

\bibitem[NHB84]{bnh84}
B.~Nienhuis, H.~J. Hilhorst, and H.~W.~J. Bl{\"o}te.
\newblock Triangular {SOS} models and cubic-crystal shapes.
\newblock {\em J. Phys. A}, 17(18):3559--3581, 1984.

\bibitem[NY04]{ny04}
M.~Noumi and Y.~Yamada.
\newblock Tropical {R}obinson--{S}chensted--{K}nuth correspondence and
  birational {W}eyl group actions.
\newblock In {\em Representation theory of algebraic groups and quantum
  groups}, volume~40 of {\em Adv. Stud. Pure Math.}, pages 371--442. Math. Soc.
  Japan, Tokyo, 2004.

\bibitem[Oko01]{oko01}
A.~Okounkov.
\newblock {I}nfinite wedge and random partitions.
\newblock {\em Selecta Math. (N.S.)}, 7(1):57--81, 2001.

\bibitem[OR03]{or03}
A.~Okounkov and N.~Reshetikhin.
\newblock {C}orrelation function of {S}chur process with application to local
  geometry of a random 3--dimensional {Y}oung diagram.
\newblock {\em J. Amer. Math. Soc.}, 16(3):581--603 (electronic), 2003.

\bibitem[OR06]{or06}
A.~Okounkov and N.~Reshetikhin.
\newblock {T}he birth of a random matrix.
\newblock {\em Mosc. Math. J.}, 6(3):553--566, 588, 2006.

\bibitem[OR07]{or07}
A.~Okounkov and N.~Reshetikhin.
\newblock {R}andom skew plane partitions and the {P}earcey process.
\newblock {\em Comm. Math. Phys.}, 269(3):571--609, 2007.

\bibitem[OSZ14]{osz14}
N.~O'Connell, T.~Sepp\"{a}l\"{a}inen, and N.~Zygouras.
\newblock Geometric {RSK} correspondence, {W}hittaker functions and symmetrized
  random polymers.
\newblock {\em Invent. Math.}, 197(2):361--416, 2014.

\bibitem[Rom15]{rom15}
D.~Romik.
\newblock {\em The Surprising Mathematics of Longest Increasing Subsequences}.
\newblock Institute of Mathematical Statistics Textbooks. Cambridge University
  Press, 2015.

\bibitem[TW94a]{tw94_airy}
C.~A. Tracy and H.~Widom.
\newblock Level-spacing distributions and the {A}iry kernel.
\newblock {\em Comm. Math. Phys.}, 159(1):151--174, 1994.

\bibitem[TW94b]{tw94_bessel}
C.~A. Tracy and H.~Widom.
\newblock Level spacing distributions and the {B}essel kernel.
\newblock {\em Comm. Math. Phys.}, 161(2):289--309, 1994.

\bibitem[VY06]{vy06}
A.~Vershik and Y.~Yakubovich.
\newblock Fluctuations of the maximal particle energy of the quantum ideal gas
  and random partitions.
\newblock {\em Comm. Math. Phys.}, 261(3):759--769, 2006.

\end{thebibliography}

\end{document}